\newtheorem{theorem}{Theorem}
\newtheorem{informal-theorem}[theorem]{Informal Theorem}
\newtheorem{lemma}[theorem]{Lemma}
\newtheorem{proposition}[theorem]{Proposition}
\theoremstyle{definition}
\newtheorem{definition}[theorem]{Definition}
\theoremstyle{remark}
\newtheorem{remark}[theorem]{Remark}
\newtheorem{algorithm}[theorem]{Algorithm}
\numberwithin{theorem}{section}
\definecolor{myorange}{rgb}{0.9, 0.55, 0.3}
\definecolor{mygreen}{rgb}{0.35, 0.71, 0.0}
\definecolor{mybrown}{rgb}{0.63, 0.32, 0.18}
\newcommand{\CC}{\mathbb{C}}
\newcommand{\QQ}{\mathbb{Q}}
\newcommand{\ZZ}{\mathbb{Z}}
\newcommand{\GG}{\mathbb{G}}
\newcommand{\FF}{\mathbb{F}}
\newcommand{\End}{\operatorname{End}}
\newcommand{\Pic}{\operatorname{Pic}}
\newcommand{\Div}{\operatorname{Div}}
\renewcommand{\div}{\operatorname{div}}
\newcommand{\Tr}{\operatorname{Tr}}
\title{Sesquilinear pairings on elliptic curves}
\author{Katherine E. Stange}
\date{\today}
\thanks{This work has been supported by NSF-CAREER CNS-1652238, NSF DMS-2401580, and an AMS Joan and Joseph Birman Fellowship 2025-26.}
\subjclass[2020]{Primary: 
11G05, 
14H52} 
\keywords{Elliptic curves, Weil pairing, Tate-Lichtenbaum pairing, complex multiplication}
\begin{document}

\maketitle

\begin{abstract}
	Let $E$ be an elliptic curve with complex multiplication by a ring $R$, where $R$ is an order in an imaginary quadratic field or quaternion algebra.  We define sesquilinear pairings ($R$-linear in one variable and $R$-conjugate linear in the other), taking values in an $R$-module, generalizing the Weil and Tate-Lichtenbaum pairings.
\end{abstract}

\section{Introduction}

The Weil and Tate-Lichtenbaum pairings are bilinear pairings on an elliptic curve $E$ with values in the multiplicative group $\GG_m$.  In the situation of complex multiplication, the points of the elliptic curve form more than just a $\ZZ$-module, but also an $R$-module, for some ring $R$ which is an order in either an imaginary quadratic field or a quaternion algebra, both of which come equipped with an involution which we call \emph{conjugation}.  It is natural then to hope for a pairing with some type of $R$-linearity.
In this paper, we generalize these classical pairings to take values in an $R$-module, so that the pairings can become \emph{sesquilinear}, or \emph{conjugate linear} in the following sense.  
If $R$ is commutative, an \emph{$R$-sesquilinear pairing} (conjugate linear on the left) is a bilinear pairing
$\langle \cdot, \cdot \rangle$ on a pair of $R$-modules, taking values in another $R$-module, that satisfies
\[
\langle \gamma x , \delta y \rangle = {\delta}{\overline\gamma}  \langle x,y \rangle, \text{ for all } \gamma, \delta \in R.
\]
Alternatively, if $\langle \gamma x , \delta y \rangle = {\overline\delta}{\gamma}  \langle x,y \rangle$, we say it is conjugate linear on the right.
In the case that $R$ is non-commutative, we also consider a twisted version; see Section~\ref{sec:sesq}.  For the remainder of the introduction, we assume $R$ is commutative; small adjustments are needed in the non-commutative case.

The Weil and Tate-Lichtenbaum pairings can be defined on divisor classes in $\Pic^0(E)$.  By considering instead $\Pic_R^0(E) := R \otimes_\ZZ \Pic^0(E)$, we have an $R$-module structure on divisor classes.  To accommodate the values of the pairing, considering $\GG_m$ as a $\ZZ$-module in multiplicative notation, we can extend scalars to $R$, writing $\GG_m^{\otimes_\ZZ R}$.  (This multiplicative tensor notation is not without its pitfalls; see the end of the introduction for further discussion.)  Write $M[\alpha]$ for the $\alpha$-torsion in an $R$-module $M$.
For each $\alpha \in R$, we obtain Galois invariant sesquilinear pairings (conjugate linear on the right),
\begin{align*}
	{W}_\alpha &: \Pic_R^0(E)[\overline{\alpha}] \times \Pic_R^0(E)[\alpha] \rightarrow \GG_m^{\otimes_\ZZ R}[\overline{\alpha}], \\
	{T}_\alpha &: \Pic_R^0(E)[\overline{\alpha}] \times \Pic_R^0(E) / [\alpha]\Pic_R^0(E) \rightarrow \GG_m^{\otimes_\ZZ R} / (\GG_m^{\otimes_\ZZ R})^{\overline{\alpha}},
\end{align*}
generalizing the classical Weil and Tate-Lichtenbaum pairings (these do not restrict to the classical pairings, but restrict to a sesquilinearization of such; see Proposition~\ref{prop:tate-n} and the discussion afterward).  The pairing $W_\alpha$ is also conjugate skew-Hermitian in the sense that
    \[
    W_\alpha(D_P,D_Q) = \overline{W_{\overline{\alpha}}(D_Q,D_P)}^{-1}.
\]
These are defined by essentially imitating the definition of the classical pairings, including extending Weil reciprocity to $R$-divisors (see Theorem~\ref{thm: weilrecipgen}).

However, this formal exercise is most interesting when applied to a curve with endomorphism ring containing a copy of $R$.  Consider an exact sequence
\begin{equation*}
\xymatrix{
0 \ar[r] &E \ar[r]^-{\eta} &\Pic^0_{R}(E) \ar[r]^-\epsilon &E \ar[r] &0
}
\end{equation*}
given by
\[
  \epsilon: \sum_i \alpha_i (P_i) \mapsto \sum_i [\alpha_i]P_i, \quad \eta: P \mapsto ([-\tau]P) - (\mathcal{O}) + \tau \cdot \left( (P) - (\mathcal{O}) \right),
\]
where $[\alpha]P$ is the image of $P$ under multiplication-by-$\alpha$, and $R = \ZZ + \tau \ZZ$ (Section~\ref{sec:CM}).  The map $\eta$ twists the $R$-action in the sense that $\eta([\alpha]P) = \overline\alpha \eta(P)$.  
By restricting the pairing to the left-hand $E$ in the exact sequence, we obtain Galois invariant pairings
\begin{align*}
	\widehat{W}_\alpha &: E[\overline{\alpha}] \times E[\alpha] \rightarrow \GG_m^{\otimes_\ZZ R}[{\alpha}], \\
	\widehat{T}_\alpha &: E[\overline{\alpha}] \times E / [\alpha]E \rightarrow \GG_m^{\otimes_\ZZ R} / (\GG_m^{\otimes_\ZZ R})^{{\alpha}},
\end{align*}
which are $R$-sesquilinear (now conjugate linear on the left because of the twisting of $\eta$) in the sense that for all $\gamma, \delta \in R$ and $P \in E[\overline\alpha]$, $Q \in E$,
\[
\widehat{T}_\alpha( [\gamma]P, [\delta]Q) = \widehat{T}_\alpha(P,Q)^{\delta \overline\gamma},
\]
and similarly for $\widehat{W}_\alpha$.  These pairings are now defined on points of $E$, respecting the endomorphism action of $R$, which is the author's main goal.  Whereas the pairings $T_\alpha$ and $W_\alpha$ are `formal' in the sense that we simply extend scalars in domain and codomain, the pairings $\widehat{T}_\alpha$ and $\widehat{W}_\alpha$ obtained by pulling back are now interacting directly with the endomorphism ring of a CM curve.

When $R$ is non-commutative, a similar construction is possible, but sesquilinearity in one entry is twisted by an action of $\overline\alpha$ (Section~\ref{sec:sesq}).

In the case that $\alpha = n \in \ZZ$, these pairings can be interpreted as a `sesquilinearization' of the usual Weil and Tate-Lichtenbaum pairings.  For example if
\[
t_n : E[n] \times E/[n]E \rightarrow \GG_m/\GG_m^n
\]
represents the usual Tate-Lichtenbaum pairing, and $R = \ZZ + \tau \ZZ$, then (Theorem~\ref{thm:cmtatereduc})
    \[
     \widehat{T}_n (P,Q) = 
   \left( 
	   t_n(P,Q)^{2N(\tau)}
	   t_n([-\tau]P,Q)^{\Tr(\tau)} \right) \left(
	   t_n([\tau - {\overline\tau}]P,Q)
   \right)^{\tau}.
    \]
In the general case, one can only express $\widehat{T}_\alpha$ in terms of $t_n$ if one computes certain preimages (see Remark~\ref{rem:preimage}).

We show that these new pairings are non-degenerate in most cases.  
The pairings are amenable to efficient computation, for example for cryptographic purposes (see Algorithm~\ref{alg:alg}).  The algorithm is essentially the same as Miller's algorithm, adapted to the sesquilinear situation \cite{Miller}.
The new pairings presented here have already been applied to isogeny-based cryptography \cite{MaculaStange, GalbraithGilchristRobert}.

Both the Tate-Lichtenbaum pairing and Weil pairing have a wide variety of interpretations in terms of cohomology, intersection pairings, Cartier duality, etc.  In this paper we take an elementary approach in terms of divisors.  However, the new pairings were discovered while revisiting an interpretation of these pairings in terms of the monodromy of the Poincar\'e biextension studied in the author's PhD thesis \cite{thesis}.  A companion paper will explain these new pairings in that context, and their relationship with elliptic nets and height pairings.
 
\textbf{Notations.}\label{sec:notations}  Greek letters ($\alpha, \beta, \ldots$) generally refer to elements of the ring $R$, with the exception of $\sigma$, which is an element of a Galois group, and $\eta$ and $\epsilon$, which are maps in Section~\ref{sec:CM}.  Roman letters in lower case ($g, h, \ldots$) will generally refer to elements of $\GG_m$ (with the exception of $f$ and $g$, sometimes denoting functions), and capital roman letters (besides $R$ and $E$) typically refer to points of an elliptic curve $E$.  We use the exponent $\otimes_\ZZ R$ for the extension of scalars from $\ZZ$ to $R$ when viewing an abelian group in multiplicative notation as a $\ZZ$-module, as in $\GG_m^{\otimes_\ZZ R}$.  Simple tensors are written $g^{\otimes \alpha}$, but we will suppress the $\otimes$, writing $g^\alpha$.  Note, however, that we will continue to view this as a left $R$-module.  Regular exponents will be reserved for the module action of $R$ and $\ZZ$ when in a multiplicative notational mode.  In particular, we have the slightly counter-intuitive\footnote{We opted for this slight dissonance over the available alternatives, which were a switch to additive notation in the multiplicative group, or the use of notation $^\beta(^\alpha x) = \,^{\beta\alpha}x$.}  
\[
   (x^{ \alpha})^\beta = x^{ \beta \alpha}.
\]
For this reason we write $(\GG_m^{\otimes_\ZZ R})^{R\alpha}$ for the image of the multiplicative left $R$-module $\GG_m^{\otimes_\ZZ R}$ under the action of the $R$-submodule $R\alpha$, or equivalently, under $R \alpha R$.  We refer to this as the set of \emph{$\alpha$-powers} of $\GG_m^{\otimes_\ZZ R}$.  (If $\alpha \in \ZZ$, or more generally the centre of $R$, we can simplify the notation from $(\GG_m^{\otimes_\ZZ R})^{R\alpha}$ to $(\GG_m^{\otimes_\ZZ R})^{\alpha}$.)

We denote the algebraic closure of a field $K$ by $\overline{K}$.  We denote the action of an endomorphism $\alpha \in R$ on $P \in E$ by $[\alpha]P$.  For an $R$-module $M$, write $M[\alpha] := \{ m \in M : \alpha m = 0 \}$.  When $R$ is commutative, this is again an $R$-module.

\textbf{Acknowledgements.}  The author is grateful to Damien Robert for rekindling her interest through his recent work \cite{Robert},\cite{Robert2}, his interest in the author's thesis, and several generous discussions, which inspired this work.  A special debt is due to Joseph Macula and Damien Robert for corrections on an earlier draft.  The author also thanks Joseph H. Silverman and Drew Sutherland for helpful feedback, and an anonymous referee for a careful reading and very useful comments.

 \section{Classical pairings}
 \label{sec:classical}

\subsection{The Weil pairing}
\label{sec:backweil}

   This section follows Miller \cite{Mil2} and Silverman \cite[Chap III, \S8]{Sil1}.  For the more general Weil pairing, see \cite{Garefalakis}, \cite[Exercise III.3.15]{Sil1}.  

\begin{definition}[Weil pairing: first definition]
\label{defn: weil1}\index{Weil pairing}
Let $m > 1$ be an integer.  Let $E$ be an elliptic curve defined over a field $K$ which contains the field of definition of $E[m]$, and with characteristic coprime to $m$ in the case of positive characteristic.  Suppose that $P, Q \in E[m]$.  Choose divisors $D_P$ and $D_Q$ of disjoint support such that
\[
D_P \sim (P) - (\mathcal{O}), \qquad D_Q \sim (Q) - (\mathcal{O}).
\]
Then $mD_P \sim mD_Q \sim 0$, hence there are functions $f_P$ and $f_Q$ such that
\[
\div(f_P) = mD_P, \qquad \div(f_Q) = mD_Q.
\]
The Weil pairing
\[
e_m: E[m] \times E[m] \rightarrow \mu_m
\]
is defined by
\[
e_m(P,Q) = \frac{f_P(D_Q)}{f_Q(D_P)}.
\]
\end{definition}

For example, we can choose $D_P$ and $D_Q$ disjoint as follows: first choose some $T$ such that $T \not\in \{ \mathcal{O}, -P, Q, Q-P \}$.  Then set $D_P = (P+T) - (T)$ and $D_Q = (Q) - (\mathcal{O})$.  Set the notation $f_{m,X}$ for the rational function with divisor $m(X) - m(\mathcal{O})$.  Then,
\begin{equation*}
e_m(P,Q) = \frac{f_P(D_Q)}{f_Q(D_P)} = \frac{f_P(Q)f_Q(T)}{f_P(\mathcal{O})f_Q(P+T)} = \frac{f_{m,P}(Q-T)f_{m,Q}(T)}{f_{m,P}(-T)f_{m,Q}(P+T)}.
\end{equation*}

\begin{definition}[Weil pairing: second definition]
\label{defn: weil2}
Let $\phi: E \rightarrow E'$ be an isogeny between elliptic curves defined over a perfect field $K$ which contains the field of definition of $\ker(\phi)$ and $\ker(\widehat{\phi})$, and with characteristic coprime to $\deg \phi$ in the case of positive characteristic.  Suppose that $P \in \ker \widehat{\phi}$, and $Q \in \ker {\phi}$.
Let $g_P$ be a rational function with principal divisor
\[
\div(g_P) = \phi^*( (P) - (\mathcal{O}) ).
\]
(In the case that $\phi = [m]$, this implies
$g_P^m = f_{m,P} \circ [m]$.)
The Weil pairing
\[
e_\phi: \ker \widehat{\phi} \times \ker \phi \rightarrow \mu_m
\]
where $m$ is any positive integer with $\ker \phi \subseteq E[m]$, and $\mu_m$ denotes the $m$-th roots of unity,
is defined by
\[
e_\phi(P,Q) = \frac{g_P(X+Q)}{g_P(X)},
\]
where $X$ is any auxiliary point chosen disjoint from the supports of $g_P$ and $g_P \circ t_Q$ (the function $g_P$ precomposed with translation by $Q$).  
\end{definition}

Taking the isogeny $\phi$ to be the multiplication-by-m map $[m]$ recovers the $m$-Weil pairing as in the first definition.

The standard properties are as follows.

\begin{proposition}
\label{prop: weilprop}
Suppose $m$ is coprime to $\operatorname{char}(K)$ in the case of positive characteristic.  Definitions \ref{defn: weil1} and \ref{defn: weil2} are well-defined, equal when defined, and have the following properties (where defined in the case of the first definition):
\begin{enumerate}
\item Bilinearity: for $\phi$ an isogeny, $P, P_1, P_2 \in \ker \widehat{\phi}$ and $Q, Q_1, Q_2 \in \ker \phi$,
\begin{align*}
e_\phi(P_1+P_2,Q) &= e_\phi(P_1,Q)e_\phi(P_2,Q), \\
e_\phi(P,Q_1+Q_2) &= e_\phi(P,Q_1)e_\phi(P,Q_2).
\end{align*}
\item Alternating: for $P \in E[m]$,
\[
e_m(P,P)=1.
\]
\item Skew-symmetry: for $\phi$ an isogeny, $P \in \ker \widehat{\phi}$ and $Q \in \ker \phi$,
\[
e_\phi(P,Q) = e_{\widehat{\phi}}(Q,P)^{-1}.
\]
\item Non-degeneracy: for nonzero $P \in E[m](\overline K)$, there exists $Q \in E[m](\overline K)$ such that
\[
e_m(P,Q) \neq 1.
\]
\item Coherence: for $\phi, \psi$ isogenies such that $\psi \circ \phi$ is well-defined, for $P \in \ker \widehat{\phi} \circ \widehat{\psi}$, and $Q \in \ker \phi$,
\[
e_{\psi \circ \phi}(P,Q) = e_\phi(\widehat{\psi}P,Q).
\]
and for 
$P \in \ker \widehat{\psi}$, and $Q \in \ker \psi \circ \phi$,
\[
e_{\psi \circ \phi}(P,Q) = e_\psi(P,\phi Q).
\]
\item Compatibility:  for $\phi: E \rightarrow E'$ an isogeny, and $m$-torsion points $P \in E'[m]$ and $Q \in E[m]$,
\[
e_m(\widehat{\phi}P, Q) = e_m(P, \phi Q).
\]
\item Galois invariance: for $P, Q \in E[m]$, and $\sigma \in \operatorname{Gal}(\overline K/K)$,
\[
e_m(P,Q)^\sigma = e_m(P^\sigma, Q^\sigma).
\]
\end{enumerate}
\end{proposition}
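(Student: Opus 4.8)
The whole proposition is powered by Weil reciprocity: if $f,h$ are rational functions on $E$ whose divisors have disjoint support, then $f(\div h)=h(\div f)$. The plan is to first settle well-definedness of the two definitions and their agreement, then deduce each listed property, working throughout with Definition~\ref{defn: weil2} where possible because it behaves best under composition of isogenies. I expect the only genuinely non-formal step to be the \emph{alternating} identity $e_m(P,P)=1$, which needs a clever auxiliary construction rather than a mechanical manipulation; everything else is bookkeeping of scalar ambiguities, disjoint-support conditions, and functoriality of pullback. The separability hypotheses (characteristic coprime to $m$ or $\deg\phi$) enter exactly to guarantee that $[m]^*$ and $\phi^*$ are injective on divisors and that functions invariant under translation by $\ker[m]$ descend through $[m]$.

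\textbf{Well-definedness and equality of the two definitions.} For Definition~\ref{defn: weil2}, the function $g_P$ is pinned down by its divisor up to a nonzero scalar, which cancels in $g_P(X+Q)/g_P(X)$; the value is independent of the auxiliary point $X$ because $\phi\circ t_Q=\phi$ for $Q\in\ker\phi$, so $t_Q^*\phi^*=\phi^*$, whence $g_P\circ t_Q$ and $g_P$ have equal divisors and differ only by a scalar. Since $\ker\phi$ and $\ker\widehat\phi$ have the same exponent, $P\in\ker\widehat\phi$ lies in $E'[m]$, so $m\big((P)-(\mathcal O)\big)$ is a principal divisor $\div(f)$ and $g_P^{\,m}=c\,(f\circ\phi)$; forming the ratio of $g_P^{\,m}$ at $X+Q$ and $X$ and using $\phi(X+Q)=\phi(X)$ gives $e_\phi(P,Q)^m=1$. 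Definition~\ref{defn: weil1} is treated the same way, with Weil reciprocity used to check independence of the choices of $D_P,D_Q$ within their linear equivalence classes, and the two definitions agree for $\phi=[m]$ because $\div(f_P\circ[m])=[m]^*(mD_P)=m\,\div(g_P)$, so $g_P^{\,m}$ and $f_P\circ[m]$ differ by a scalar and unwinding the two formulas matches them (this is \cite[Chapter~III, \S8]{Sil1}).

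\textbf{Bilinearity, alternating, skew-symmetry, non-degeneracy.} Linearity in the right entry is a telescoping of the auxiliary-point formula: $g_P(X+Q_1+Q_2)/g_P(X)$ splits as $\big(g_P((X+Q_1)+Q_2)/g_P(X+Q_1)\big)\cdot\big(g_P(X+Q_1)/g_P(X)\big)$, and each factor is a pairing value by $X$-independence. Linearity in the left entry uses Abel's theorem to produce $h$ with $\div(h)=(P_1)+(P_2)-(P_1+P_2)-(\mathcal O)$, so $g_{P_1}g_{P_2}=c\,(h\circ\phi)\,g_{P_1+P_2}$, and $h\circ\phi$ cancels in the ratio because $\phi(X+Q)=\phi(X)$. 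The alternating property $e_m(P,P)=1$ is the crux: the product $\prod_{i=0}^{m-1}\big(f_P\circ t_{[i]P}\big)$ has divisor $m\sum_{i=0}^{m-1}\big(([1-i]P)-([-i]P)\big)$, which telescopes to zero since $[m]P=\mathcal O$, hence is constant, and evaluating this constancy against a suitable disjoint divisor recovers $e_m(P,P)=1$ (as in \cite[Chapter~III, \S8]{Sil1}). Skew-symmetry for $\phi=[m]$ is then immediate from bilinearity and the alternating law, $1=e_m(P+Q,P+Q)=e_m(P,Q)e_m(Q,P)$; for a general isogeny it follows either from a direct Weil-reciprocity computation relating $g_P$ to a pullback of the function $h_Q$ with $\div(h_Q)=\widehat\phi^*((Q)-(\mathcal O))$, or by reduction to the $[m]$ case via coherence. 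For non-degeneracy, if $e_m(P,Q)=1$ for every $Q\in E[m]$ then $g_P$ is invariant under all translations by $\ker[m]$; since $[m]$ is separable, $g_P$ descends as $g_P=g'\circ[m]$, so $[m]^*\div(g')=\div(g_P)=[m]^*((P)-(\mathcal O))$, and injectivity of $[m]^*$ forces $(P)-(\mathcal O)$ to be principal, i.e.\ $P=\mathcal O$.

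\textbf{Coherence, compatibility, Galois invariance, and the main difficulty.} Coherence is functoriality of pullback together with $\widehat{\psi\circ\phi}=\widehat\phi\circ\widehat\psi$: writing the defining function of $e_{\psi\circ\phi}$ as a pullback of the defining function of $e_\phi$ (resp.\ of $e_\psi$) and applying the auxiliary-point formula yields the two claimed identities. Compatibility $e_m(\widehat\phi P,Q)=e_m(P,\phi Q)$ comes from the adjointness of $\phi^*$ and $\phi_*$ on divisors under evaluation of functions, i.e.\ a Weil-reciprocity computation with Definition~\ref{defn: weil1} (equivalently, from coherence). Galois invariance holds because every divisor and function in the construction can be chosen equivariantly: if $\div(f)=D$ then $\div(f^\sigma)=D^\sigma$, and evaluation commutes with $\sigma$, so the quotient defining $e_m$ transforms correctly. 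As anticipated, the part that is not purely formal is the alternating identity, and, more diffusely, keeping the scalar ambiguities and disjoint-support hypotheses mutually consistent across all the properties — in particular in reconciling the two definitions and in extending skew-symmetry beyond multiplication maps.
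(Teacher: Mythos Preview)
Your sketch is correct and follows the standard route (Weil reciprocity for well-definedness, the telescoping product for the alternating law, descent through $[m]$ for non-degeneracy, functoriality of pullback for coherence), essentially as in \cite[III.8]{Sil1}. The paper itself does not give a proof of this proposition at all: it simply cites \cite[Chapter~16]{thesis}, \cite{Robert}, and \cite[Sec~3.1]{classactionpairing}, treating these properties as classical background. So there is nothing to compare beyond noting that you have written out what the paper defers to the literature, and your outline matches what those references contain.

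One small imprecision worth tightening if you flesh this out: in your alternating argument you form $\prod_{i}f_P\circ t_{[i]P}$ and correctly show its divisor telescopes to zero, but the passage from ``this product is constant'' to ``$e_m(P,P)=1$'' is left implicit. The clean way (as in Silverman) is to run the telescope with $g_P$ and translates by $[i]P'$ where $[m]P'=P$, so that the constancy of $\prod_i g_P(X+[i]P')$ directly yields $g_P(X+P)/g_P(X)=1$. Your version with $f_P$ needs an extra step to connect back to the pairing value.
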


\begin{proof}
For example, see \cite[Chapter 16]{thesis}, \cite{Robert}, \cite[Sec 3.1]{classactionpairing}.
\end{proof}

    For elliptic curves over $\CC$, the Weil pairing can be interpreted as a determinant, or an intersection pairing; see \cite{GalC}.
The Weil pairing also arises from the Cartier duality of the kernels of an isogeny and its dual; see Mumford \cite[IV.\S20, p.183-5]{Mum} and Milne \cite[\S11,16]{MilStorrs}.

\subsection{The Tate-Lichtenbaum pairing}
\label{sec:backtate}

Another pairing intimately related to the Weil pairing is the Tate-Lichtenbaum pairing.  This pairing was first defined by Tate \cite{Tat} for abelian varieties over $p$-adic number fields in 1958.  In 1959, Lichtenbaum defined a pairing on Jacobian varieties and showed that it coincided with the pairing of Tate \cite{Lic}.  The pairing was introduced to cryptography by Frey and R\"uck \cite{FreyRuck}.  Descriptions can be found in Silverman \cite[VIII.2, X.1]{Sil1} and Duquesne-Frey \cite{DuqFre2}.
For our version here, see for example \cite{Gal}.

\begin{definition}
\label{defn: tate}
Let $m > 1$ be an integer.  Let $E$ be an elliptic curve defined over a field $K$.  Suppose that $P \in E(K)[m]$.  Choose divisors $D_P$ and $D_Q$ of disjoint support such that
\[
D_P \sim (P) - (\mathcal{O}), \qquad D_Q \sim (Q) - (\mathcal{O}).
\]
Then $mD_P \sim 0$, hence there is a function $f_P$ such that
\[
\div(f_P) = mD_P.
\]
The Tate-Lichtenbaum pairing
\[
t_m: E(K)[m] \times E(K)/mE(K) \rightarrow K^* / (K^*)^m
\]
is defined by
\[
t_m(P,Q) = f_P(D_Q).
\]
\end{definition}

\begin{proposition}
  \label{prop:tateprops-classical}
Definition \ref{defn: tate} is well-defined, and has the following properties:
\begin{enumerate}
\item Bilinearity: for $P,P' \in E(K)[m]$ and $Q,Q' \in E(K)$
\begin{align*}
t_m(P+P',Q) &= t_m(P,Q)t_m(P',Q), \\
t_m(P,Q+Q') &= t_m(P,Q)t_m(P,Q').
\end{align*}
\item Non-degeneracy: Let $K$ be a finite field containing the $m$-th roots of unity $\mu_m$.  For nonzero $P \in E(K)[m]$, there exists $Q \in E(K)$ such that
\[
t_m(P,Q) \neq 1.
\]
Furthermore, for $Q \in E(K) \backslash mE(K)$, there exists $P \in E(K)[m]$ such that
\[
t_m(P,Q) \neq 1.
\]
\item Compatibility:  for an isogeny $\phi: E \rightarrow E'$, an $m$-torsion point $P \in E'$ and a point $Q \in E$,
\[
t_m(\widehat{\phi}P, Q) = t_m(P, \phi Q).
\]
\item Galois invariance: for $P, Q \in E[m]$, and $\sigma \in \operatorname{Gal}(\overline K/K)$,
\[
t_m(P,Q)^\sigma = t_m(P^\sigma, Q^\sigma).
\]
\end{enumerate}
\end{proposition}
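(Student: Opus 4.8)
The plan is to deduce everything except non-degeneracy from two elementary facts: a rational function evaluated at a degree-zero divisor is unchanged when the function is rescaled by a nonzero constant, and \emph{Weil reciprocity} $f(\div g)=g(\div f)$ for functions $f,g$ with disjoint divisor support. Disjointness is arranged throughout by taking the divisor that is evaluated last to be generic, which I would note once. Since $P,Q\in E(K)$, all divisors may be chosen $K$-rational and $f_P\in K(E)^*$ (unique up to $K^*$), so $f_P(D_Q)\in K^*$. For \emph{well-definedness} I would check invariance modulo $(K^*)^m$ under each choice: rescaling $f_P$ by $c\in K^*$ contributes $c^{\deg D_Q}=1$; replacing $D_P$ by $D_P+\div(h)$ multiplies $f_P$ by $h^m$ up to a constant, changing the value by $h(D_Q)^m$; replacing $D_Q$ by $D_Q+\div(h)$ changes the value by $f_P(\div h)=h(\div f_P)=h(D_P)^m$ by Weil reciprocity; and if $Q=[m]Q'$ then $(Q)-(\mathcal O)\sim m\big((Q')-(\mathcal O)\big)$, so $D_Q$ may be taken as $mD_{Q'}+\div(h)$, giving $f_P(D_Q)=f_P(D_{Q'})^m\,h(D_P)^m\in(K^*)^m$, which is what makes the pairing descend to $E(K)/mE(K)$.

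For \emph{bilinearity}: the group law gives $D_P+D_{P'}\sim D_{P+P'}$, so $D_{P+P'}=D_P+D_{P'}+\div(h)$ for some $h$; comparing divisors yields $f_Pf_{P'}=c\,f_{P+P'}h^{-m}$, and evaluating at the degree-zero divisor $D_Q$ gives left-linearity. Symmetrically, take $D_{Q+Q'}=D_Q+D_{Q'}+\div(h)$; then $f_P(D_{Q+Q'})=f_P(D_Q)\,f_P(D_{Q'})\,f_P(\div h)$ with $f_P(\div h)=h(D_P)^m$ an $m$-th power by Weil reciprocity, giving right-linearity. For \emph{compatibility} with $\phi\colon E\to E'$: take $D_P\sim(P)-(\mathcal O)$ on $E'$ with $\div(f_P)=mD_P$; by definition of the dual isogeny $\phi^*D_P\sim(\widehat\phi P)-(\mathcal O)$ on $E$, and since $\div(f_P\circ\phi)=\phi^*\div(f_P)=m\,\phi^*D_P$, the function $f_P\circ\phi$ is a valid choice of $f_{\widehat\phi P}$ (up to an $m$-th power, harmless by well-definedness); with $D_{\phi Q}:=\phi_*D_Q$ one has $f_P(\phi_*D_Q)=(f_P\circ\phi)(D_Q)$, so $t_m(\widehat\phi P,Q)=t_m(P,\phi Q)$ (an inseparable $\phi$ only introduces uniform multiplicities in $\phi^*$). \emph{Galois invariance} is then immediate: if $D_P,f_P,D_Q$ are valid choices for $(P,Q)$, then $D_P^\sigma,f_P^\sigma,D_Q^\sigma$ are valid for $(P^\sigma,Q^\sigma)$, $\div(f_P^\sigma)=mD_P^\sigma$, and $f_P^\sigma(D_Q^\sigma)=f_P(D_Q)^\sigma$.

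\emph{Non-degeneracy} is the substantive point, and where I expect the real work; here $K=\mathbb{F}_q$ with $\mu_m\subseteq K$ and $m$ prime to $\operatorname{char}K$. I would compare $t_m$ with the Weil pairing of Proposition~\ref{prop: weilprop}: for $P\in E(K)[m]$ and $Q\in E(K)$, choose $R\in E(\overline K)$ with $[m]R=Q$ and let $\pi$ be the $q$-power Frobenius; then $R^\pi-R\in E[m]$, and one shows
\[
t_m(P,Q)^{(q-1)/m}=e_m\big(P,\,R^\pi-R\big)^{\pm 1}
\]
in $\mu_m$, where $x\mapsto x^{(q-1)/m}$ realizes the isomorphism $K^*/(K^*)^m\xrightarrow{\sim}\mu_m$. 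Proving this identity by matching the divisorial definitions of $t_m$ and of $e_m=e_{[m]}$ (Definition~\ref{defn: weil2}, where $f_P\circ[m]=c\,g_P^m$) is the one genuinely delicate computation — the main obstacle. Granting it: since $\pi-1$ is a separable isogeny with kernel $E(K)$ it is surjective, so $\{R^\pi-R:Q\in E(K)\}=E[m]$, and hence for $P\neq\mathcal O$ the non-degeneracy of $e_m$ produces a $Q$ with $t_m(P,Q)\neq1$, which is non-degeneracy in the first variable. The exact sequence $0\to E(K)[m]\to E(K)\xrightarrow{[m]}E(K)\to E(K)/mE(K)\to0$ forces $|E(K)[m]|=|E(K)/mE(K)|$, and since $K^*/(K^*)^m\cong\mathbb{Z}/m\mathbb{Z}$ separates points of any group killed by $m$, a counting argument (Pontryagin duality for finite abelian groups) promotes non-degeneracy in the first variable to both. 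An alternative to the explicit comparison is the Kummer-sequence identification $E(K)/mE(K)\cong H^1(K,E[m])$ (using $H^1(K,E)=0$, Lang's theorem) together with finite-field local duality and the self-duality of $E[m]$ under $e_m$; in either approach the crux is linking the elementary divisor formula to the Weil pairing, and for brevity one could instead cite \cite{Gal} or \cite{FreyRuck} at this step.
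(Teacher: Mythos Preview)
Your proof is correct and follows the standard route: well-definedness and bilinearity from Weil reciprocity, compatibility via $\phi^*D_P\sim(\widehat\phi P)-(\mathcal O)$ and the identity $(f\circ\phi)(D)=f(\phi_*D)$, and non-degeneracy over a finite field via the comparison $t_m(P,Q)^{(q-1)/m}=e_m(P,R^\pi-R)$ together with a duality count. The paper itself gives no proof at all here---it simply cites \cite[Chapter 16]{thesis}, \cite{Robert}, and \cite[Sec 3.2]{classactionpairing}---so your sketch is precisely the content those references supply (indeed your Frobenius formula is the $\phi=[m]$ case of the paper's own Remark~\ref{rem:red}).
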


\begin{proof}
See for example \cite[Chapter 16]{thesis}, \cite{Robert} and \cite[Sec 3.2]{classactionpairing}.
\end{proof}

\begin{remark}
	\label{rem:red}
For purposes such as cryptography, where $K = \FF_q$ and we wish to compare values of the Tate-Lichtenbaum pairing, it is typical to apply a final exponentiation by $(q-1)/m$ in order to obtain values in $\mu_m$.

Including this final exponentiation, there is a more general notion of Tate pairing associated to a $\FF_q$-rational isogeny $\phi: E \rightarrow E'$, that is,
\[
t_\phi: \ker \widehat{\phi}(\FF_q) \times E'(\FF_q)/ \phi E(\FF_q) \rightarrow \mu_m,
\]
where $m$ is any positive integer so that $\ker \phi \subseteq E[m] \subseteq E[q-1]$.  This generalizes the definition above when $\phi = [m]$, and can be given by
\[
t_\phi(P,Q) = e_{\phi}( \pi_q(T)-T, P),
\]
where $T$ is an arbitrarily chosen $\phi$-preimage of $Q$, $\pi_q$ is the $q$-power Frobenius, and $e_\phi$ is the Weil pairing.  It has the property that its values agree with those of $t_m^{\frac{q-1}{m}}$ on the common codomain; in other words, it is a restriction.  See \cite{Bruin}, \cite{Robert} and \cite[Sec 3.2]{classactionpairing}; see also \cite{Garefalakis}.
\end{remark}

\section{The calculus of $R$-divisors}

Let $R$ be an order in an imaginary quadratic field or quaternion algebra.  We wish to extend scalars from the divisor group $\Div(E)$, considered as a $\ZZ$-module, to the $R$-module $R \otimes_\ZZ \Div(E)$.   The purpose of this section is to verify that all usual notions (divisor, principality, pullback and pushforward, divisor of a function, evaluation of a function at a divisor, Weil reciprocity, etc.) are compatible, defined, and well-behaved under this extension.

Throughout the rest of the paper, we choose an integral basis:  write $R = \ZZ[\tau_i] :=  \sum_i \tau_i \ZZ$, where $\tau_0 = 1$ and we let $i$ range in $\{0,1\}$ or $\{0,1,2,3\}$ according to the rank $r \in \{2,4\}$ of $R$. When we sum over $i$ the range will be understood in context.  

Such a ring $R$ comes equipped with an involution which we term \emph{conjugation}, denoted $\alpha \mapsto \overline{\alpha}$.  In the quaternion algebra case, this is order reversing:  $\overline{\alpha \beta} = \overline \beta \overline \alpha$.

Let $E$ be an elliptic curve with divisor group $\Div(E)$.
We extend common notions from $\Div(E)$ to $R \otimes_\ZZ \Div(E)$.  We emphasize that in this section we make no assumption that $E$ has complex multiplication.

\subsection{$R$-divisors}

We define $\Div_R(E) := R \otimes_\ZZ \Div(E)$ to be the $R$-module generated by all symbols $(P)$, where $P$ is a point of $E$, i.e. finite formal $R$-linear combinations $\sum_P \alpha_P (P), \alpha_P \in R$ of such symbols, which we call \emph{$R$-divisors}.  
(We will frequently suppress the $\otimes$ for notational simplicity.)
Then $\Div_R(E)$ is an $R$-module under the action $\alpha \cdot (\beta \otimes D) = \alpha \beta \otimes D$.  A divisor $\sum_P \alpha_P (P)$ is of degree $0$ if $\sum_P \alpha_P = 0$ in $R$; these form a sub-$R$-module $\Div^0_R(E) \cong R \otimes_\ZZ \Div^0(E)$.  

In the presence of a preferred integral basis $\tau_i$ for $R$, we can write any $R$-divisor uniquely as a sum over $i$:
\[
\sum_P\left(\sum_i m_{i,P}\tau_i\right)(P) = \sum_i \tau_i \left(\sum_P m_{i,P} (P) \right).
\]
We say that an $R$-divisor is \emph{principal} if it is an $R$-linear combination of principal divisors of $\Div(E)$ (in which case it is certainly of degree zero).
We see that the principal divisors form a sub-$R$-module and we define $\Pic_R(E)$ and $\Pic_R^0(E)$ to be the $R$-module quotient of $\Div_R(E)$ and $\Div_R^0(E)$ by the principal divisors.  We use $\sim$ to denote linear equivalence (equivalence up to principal divisors).  Observe that 
$\Pic_R(E) \cong R \otimes_\ZZ \Pic(E)$, $\Pic_R^0(E) \cong R \otimes_\ZZ \Pic^0(E)$.

Recall from the introduction that we use the notation $G^{\otimes_\ZZ R}$ for the extension of scalars from $\ZZ$ to $R$ for a $\ZZ$-module written in multiplicative notation (i.e. a group $G$ in multiplicative notation).  
Let $\GG_m$ be the multiplicative group.  Then $\GG_m^{\otimes_\ZZ R}$ is an $R$-module whose action is written multiplicatively as $\alpha \cdot x = x^{\otimes \alpha} = x^\alpha$.  
As a reminder, the action is still a left action, so
\[
	\left({\prod g_i^{\tau_i}}\right)^{\alpha} = \prod g_i^{ {\alpha\tau_i}}.
\]
It also has a conjugation which will be useful:
\[
	\overline{\prod g_i^{\tau_i}} := \prod g_i^{ \overline{\tau_i}}.
\]

Similarly, the unit group of the function field, $K(E)^*$, extends to $(K(E)^*)^{\otimes_{\ZZ}R}$, and we may write, for example, $f^\alpha$ for $f \in K(E)^*$ acted upon by $\alpha \in R$.  Observe that these definitions are compatible with evaluation of a function at a point, i.e. we can define
\[
  (f^\alpha)(P) := (f(P))^\alpha, \quad f \in K(E)^*, P \in E(K),
\]
and $(fg)(P) := f(P)g(P)$ for $f,g \in (E(K)^*)^{\otimes_\ZZ R}$, at which point evaluation at $P$ becomes an $R$-module homomorphism from $(K(E)^*)^{\otimes_\ZZ R}$ to $(K^*)^{\otimes_\ZZ R}$.

We extend the notion of the divisor of a function $R$-linearly also, defining
\begin{equation}
\label{eqn:conjf}
  \div( f^\alpha ) := \alpha \cdot \div(f), \quad f \in K(E)^*, \alpha \in R,
\end{equation}
and $\div(fg) := \div(f) + \div(g)$ for $f,g \in (E(K)^*)^{\otimes_\ZZ R}$,
so that $\div$ becomes an $R$-module homomorphism.
Thus principal divisors are those which are divisors of $f \in (K(E)^*)^{\otimes_{\ZZ}R}$.

We define the usual push-foward and pull-back operations on divisors by extending $R$-linearly.  Suppose $\phi: E \rightarrow E'$.  Then
\[
	\phi^*\left( \alpha D \right) := \alpha \phi^* D, \quad
	\phi_*\left(  \alpha D \right) :=  \alpha \phi_* D.
\]
These inherit the usual desired properties:
\begin{enumerate}
	\item $\phi_* \phi^* D =  (\deg \phi) D$
	\item $\phi^* \div(f) = \div( \phi^* f)$, $\phi_* \div(f) = \div( \phi_* f)$
	\item $ (\phi \circ \psi)_*=\phi_* \psi_* $, $ (\phi \circ \psi)^*=\psi^* \phi^*$
\end{enumerate}
where we define $\phi_* (f^\alpha) := (\phi_* f)^\alpha$ and $\phi^*( f^\alpha ) := (\phi^* f)^\alpha$.

We also have a Galois action: 
$\left( \alpha D \right)^\sigma := \alpha (D^\sigma)$ for $\sigma \in \operatorname{Gal}(\overline{K}/K)$.

For a divisor $D = \sum n_P  (P) \in \Div(E)$, $n_P \in \ZZ$, we define
\[
D^\Sigma := \sum [n_P]P \in E.
\]

Viewing $E$ as a $\ZZ$-module, we obtain an $R$-module $R \otimes_\ZZ E$.
Then we have an $R$-module isomorphism
\[
\Pic_R^0(E) \cong R \otimes_\ZZ E, \quad
\alpha D \mapsto \alpha \otimes D^\Sigma.
\]
To show this is an isomorphism, we need to check that it is injective (surjectivity is clear).  If $D = \sum_i \tau_i D_i \mapsto \mathcal{O}$ 
then $D_i^\Sigma = \mathcal{O}$ for all $i$, so $D$ is principal.  In fact, an inverse is given by 
\[
	\sum_i \tau_i \otimes P_i \mapsto \sum_i \tau_i ( (P_i) - (\mathcal{O}) ).
\]

\subsection{Evaluation of functions at divisors}

We define evaluation of $f^\alpha$ for $f \in K(E)$, $\alpha \in R$ at $D \in \Div(E)$ as
\[
  (f^\alpha)(D) := (f(D))^\alpha,
\]
and extend to $\Div_R(E)$ by defining for $D \in \Div(E)$, $f \in (K(E)^*)^{\otimes_\ZZ R}$,
\[
f(\alpha \cdot D) := f(D)^{\overline{\alpha}}.
\]
This definition requires that the supports of $D$ and $\div(f)$ are disjoint.  
Observe the vinculum\footnote{Thank you to my brother and Wikipedia for teaching me this term for an \texttt{$\backslash\text{overline}$}.}, which reflects the duality between $f$ and $D$.  Among other things, it allows for the two left $R$-actions to interact as follows in the non-commutative setting:
\[
	f(\alpha \beta \cdot D) = f(\beta \cdot D)^{\overline{\alpha}} = f(D)^{\overline \beta \overline\alpha} = f(D)^{\overline{\alpha\beta}}.
\]

\subsection{Weil reciprocity}
A variation of Weil reciprocity (\cite[Chapter VI, Corollary to Theorem 10]{Lang}) holds for us:
\begin{theorem}
\label{thm: weilrecipgen}
    Let $f, g \in (K(E)^*)^{\otimes_\ZZ R}$ have disjoint support.  Then
    \[
	    f(\div(g)) = \overline{g(\div(f))}.
    \]
\end{theorem}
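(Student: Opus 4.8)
The plan is to reduce to classical Weil reciprocity (the $R=\ZZ$ case, \cite[Chapter~VI, Corollary to Theorem~10]{Lang}) by expanding everything in the fixed integral basis $\{\tau_i\}$ of $R$. Since $R=\bigoplus_i\ZZ\tau_i$ is free over $\ZZ$, the group $(K(E)^*)^{\otimes_\ZZ R}$ splits as $\bigoplus_i K(E)^*$, so $f$ and $g$ have \emph{unique} expressions $f=\prod_i f_i^{\tau_i}$ and $g=\prod_j g_j^{\tau_j}$ with $f_i,g_j\in K(E)^*$. By $R$-linearity of $\div$ we have $\div(f)=\sum_i\tau_i\div(f_i)$ and $\div(g)=\sum_j\tau_j\div(g_j)$, and — again because the $\tau_i$ form a $\ZZ$-basis, so no cancellation occurs between basis components — the support of the $R$-divisor $\div(f)$ is the union of the supports of the $\div(f_i)$, and likewise for $g$. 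Hence the hypothesis that $f$ and $g$ have disjoint support forces $\div(f_i)$ and $\div(g_j)$ to have disjoint support for every pair $i,j$, so classical Weil reciprocity applies termwise: $f_i(\div(g_j))=g_j(\div(f_i))$ in $K^*$.

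Next I would unwind the definitions on each side. Using additivity of evaluation in the divisor argument, the rule $h(\alpha\cdot D)=h(D)^{\overline\alpha}$, multiplicativity of evaluation in the function argument, the rule $(h^\alpha)(D)=h(D)^\alpha$, and the left-action convention $(x^\alpha)^\beta=x^{\beta\alpha}$, one computes
\[
f(\div(g)) = \prod_j f\bigl(\div(g_j)\bigr)^{\overline{\tau_j}} = \prod_j\Bigl(\prod_i f_i(\div(g_j))^{\tau_i}\Bigr)^{\overline{\tau_j}} = \prod_{i,j}\bigl(f_i(\div(g_j))\bigr)^{\overline{\tau_j}\,\tau_i},
\]
and symmetrically $g(\div(f)) = \prod_{i,j}\bigl(g_j(\div(f_i))\bigr)^{\overline{\tau_i}\,\tau_j}$. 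Applying the conjugation on $(K^*)^{\otimes_\ZZ R}$, which satisfies $\overline{h^\beta}=h^{\overline\beta}$ for $h\in K^*$ and $\beta\in R$, together with the order-reversal $\overline{\overline{\tau_i}\,\tau_j}=\overline{\tau_j}\,\tau_i$, gives
\[
\overline{g(\div(f))}=\prod_{i,j}\bigl(g_j(\div(f_i))\bigr)^{\overline{\tau_j}\,\tau_i}.
\]
Comparing this with the expression for $f(\div(g))$ and invoking the termwise classical reciprocity $f_i(\div(g_j))=g_j(\div(f_i))$ from the first step completes the proof.

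The only delicate point — and the place where the vinculum in the statement earns its keep — is the bookkeeping of the two left $R$-actions against the order-reversing conjugation in the quaternionic case: the exponent $\overline{\tau_j}\,\tau_i$ appearing in $f(\div(g))$ must be reproduced \emph{exactly} by $\overline{\overline{\tau_i}\,\tau_j}$ on the other side, which is precisely the identity $\overline{\alpha\beta}=\overline\beta\,\overline\alpha$. I expect this to be the main (indeed essentially the only) obstacle, and I would re-derive $(x^\alpha)^\beta=x^{\beta\alpha}$ at each step to guard against the usual off-by-a-conjugate slip. The remaining ingredients — additivity and multiplicativity of evaluation, $R$-linearity of $\div$, and the no-cancellation-across-the-basis observation — are routine and have already been set up in this section.
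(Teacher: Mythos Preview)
Your proposal is correct and follows essentially the same approach as the paper: decompose $f$ and $g$ in the integral basis, apply classical Weil reciprocity to each pair $(f_i,g_j)$, and use the order-reversing property of conjugation to match the exponents $\overline{\tau_j}\tau_i$. The paper's proof is the same chain of equalities run from one side to the other rather than computing both sides separately, and it omits your explicit justification that disjoint support of $f$ and $g$ forces disjoint support of each $f_i$ and $g_j$; otherwise the arguments are identical.
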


\begin{proof}
	The proof relies on Weil reciprocity for $\Div(E)$.  Suppose $f = \prod_i f_i^{ \tau_i}$ and $g = \prod_j g_j^{ \tau_j}$.  We have
	\begin{align*}
		f(\div(g))
		&= \prod_i f_i(\div(g))^{ \tau_i} 
		= \prod_{ij} f_i(\div(g_j))^{ \overline{\tau_j} \tau_i}
		= \prod_{ij} g_j(\div(f_i))^{ \overline{\tau_j} \tau_i} \\
		&= \overline{\prod_{ij} g_j(\div(f_i))^{ \overline{\tau_i} \tau_j}} 
		= \overline{\prod_{j} g_j(\div(f))^{ \tau_j}} 
		= \overline{ g(\div(f)) }.
	\end{align*}
\end{proof}

\section{Sesquilinear pairings}
\label{sec:sesq}

If $R$ is commutative, an \emph{$R$-sesquilinear pairing}, conjugate linear on the right, is a bilinear pairing
$\langle \cdot, \cdot \rangle$ on a pair of $R$-modules, taking values in another $R$-module, that satisfies
\[
\langle \alpha x , \beta y \rangle = { \overline\beta}\alpha \cdot \langle x,y \rangle, \text{ for all } \alpha, \beta \in R.
\]
We say instead that it is conjugate linear on the left when $\langle \alpha x , \beta y \rangle = {\beta}\overline\alpha \cdot \langle x,y \rangle$.
For the non-commutative case, we need to add a type of twisting.
Recall that $R$ is a maximal order in a division algebra.  Thus we can set the notation $R_\gamma := \gamma^{-1} R \gamma \cap R$, a subring of $R$.
For $\gamma \in R$ and $\delta \in R_\gamma$, let $\delta^{(\gamma)}$ be defined as that element of $R$ which satisfies $\delta^{(\gamma)} \gamma = \gamma \delta$.
For us, a \emph{$\gamma$-twisted $R$-sesquilinear pairing} is a bilinear pairing 
$\langle \cdot, \cdot \rangle$ on a pair of modules, the first an $R_\gamma$-module and the second an $R$-module, taking values in another $R$-module, that satisfies
\[
	\langle \alpha x , \beta y \rangle = { \overline\beta}\;{{\alpha}^{(\overline\gamma)}} \cdot \langle x,y \rangle, \text{ for all } \alpha \in R_\gamma, \beta \in R.
\]
Observe that for rank $2$, commutativity implies $\delta^{(\gamma)} = \delta$ and $R_{\gamma} = R$, so the $\gamma$-twisting is vacuous, and we recover sesquilinear pairings in the traditional sense.

The purpose of this section is to generalize the definitions of the classical Weil and Tate pairings (Section~\ref{sec:classical}) in the context of $R$-divisors, to obtain sesquilinear pairings, and prove they enjoy the same host of properties, suitably adapted.  The proofs of the standard properties are straightforward, although finicky, particularly in the case of rank four.  But the proof of non-degeneracy for these pairings is non-trivial (as it is in the classical case).

\subsection{Generalization of Tate-Lichtenbaum pairing}
\label{sec:genTate}

For each $\alpha \in R$, we define an $\alpha$-twisted $R$-sesquilinear pairing, conjugate linear on the right, generalizing the Tate-Lichtenbaum pairing:  
\[
	T_\alpha : \Pic_R^0(E)[\overline{\alpha}] \times \Pic_R^0(E) / R\alpha \Pic_R^0(E) \rightarrow \GG_m^{\otimes_\ZZ R} /  (\GG_m^{\otimes_\ZZ R})^{R\overline{\alpha}},
\]
by
\[
T_\alpha(D_P,D_Q) := f_P(D_Q) 
\quad \text{ where }\quad
\div(f_P) = \overline{\alpha} \cdot D_{P},
\]
where $D_P$ and $D_Q$ are chosen to have disjoint support.  Observe that $\Pic_R^0(E)[\overline{\alpha}]$ is an $R$-module when $R$ is commutative, but in general we can only assume it is an $R_{\overline{\alpha}}$-module.
Also, we use $R\alpha \Pic_R^0(E)$ since $\alpha \Pic_R^0(E)$ may not be an $R$-module in the non-commutative case.
Finally, the target could equivalently be written $\GG_m^{\otimes_\ZZ R} /  (\GG_m^{\otimes_\ZZ R})^{R\overline{\alpha}R}$.

Although the notation $T_\alpha$ does not reference $R$, its definition does depend upon the choice of $R$ containing $\alpha$.  For example, $T_\alpha$ for $R$ of rank $4$ does not agree with $T_\alpha$ defined for a rank two subring containing $\alpha$; this is a phenomenon similar to the relationship between $T_n$ and $t_n$ described in Proposition~\ref{prop:tate-n}.  Even $R \subseteq R'$ of the same rank can result in different pairings.  In this paper, we are assuming $R$ to be fixed, being either an imaginary quadratic or quaternion order.

Recall our convention that $R = \ZZ[\tau_i] :=  \sum_i \tau_i \ZZ$, where $\tau_0 = 1$ and we let $i$ range in $\{0,1\}$ or $\{0,1,2,3\}$ according to the rank $r \in \{2,4\}$ of $R$.  In the rank $2$ case, we will write $\tau := \tau_1$ for simplicity.  
To satisfy the condition on supports, observe that for any divisor $D \in \Pic_R^0(E)$, there exist points $P_0,\ldots, P_{r-1} \in E$ so that
\begin{equation}
    \label{eqn:DP}
    D \sim  \sum_i \tau_i ( (P_i + S) - (S))
\end{equation}
for any auxiliary point $S \in E$.  
In particular, if $P_0, \ldots, P_{r-1}$ are such that $D_P \sim \sum_i \tau_i ( (P_i) - (\mathcal{O}) )$, and
\[
	\overline{\alpha} \tau_i = \sum_j \alpha_{ji} \tau_j,
\]
then we can take $f_P = \prod_i f_i^{ \tau_i} \in (K(E)^*)^{\otimes_\ZZ R}$, where
\begin{equation}
    \label{eqn:hpgp}
    \div(f_i) = \sum_{j=0}^{r-1} \alpha_{ij}(P_j) -\left(\sum_{j=0}^{r-1} \alpha_{ij}\right)(\mathcal{O}),
\end{equation}
and then by a judicious choice of $D_Q$ (choosing $S$ in the linearly equivalent form \eqref{eqn:DP}), we can satisfy the condition on disjoint supports.

\begin{remark}
\label{rem:miller}
The equations \eqref{eqn:hpgp} allow for a Miller-style algorithm to compute this pairing \cite{Miller} \cite[\S 26.3.1]{GalbraithBook}.  This is 
    polynomial time in the coefficients of the minimal polynomial of $\alpha$.  
For example, if $R$ has basis $1$ and $\tau$, and $D_P = \left( (P_0) - (\mathcal{O}) \right) + \tau \cdot \left( (P_1) - (\mathcal{O}) \right)$, and
\[
	\overline{\alpha} = a + c \tau, \quad \overline\alpha \tau = b  + d \tau, \quad a,b,c,d \in \ZZ,
\]
then $f_P = f_0 f_1^{ \tau} \in (K(E)^*)^{\otimes_\ZZ R}$, where
\begin{equation}
    \label{eqn:hpgp-2}
\div(f_0) = a(P_0) + b(P_1)-(a+b)(\mathcal{O}), \quad
\div(f_1) = c(P_0) + d(P_1) - (c+d)(\mathcal{O}).
\end{equation}
More details are given for the CM case in Algorithm~\ref{alg:alg}.
\end{remark}

   \begin{theorem}
\label{thm:tateprops}
    The pairing defined above is well-defined, bilinear, and satisfies
    \begin{enumerate}
	    \item Twisted sesquilinearity:    For $\gamma \in R_{\overline\alpha}$ and $\delta \in R$, \[
    {T}_\alpha(\gamma \cdot D_P, \delta \cdot D_Q)
    = {T}_\alpha(D_P,D_Q)^{{\overline\delta}\;{\gamma}^{(\overline{\alpha})}}.
    \]
        \item Compatibility:   
            Let $\phi: E \rightarrow E'$.   Then 
    \[
    T_\alpha(\phi_* D_P,\phi_* D_Q) = T_\alpha(D_P,D_Q)^{\deg \phi}.
    \]
    \item Coherence:
	    Suppose $D_P \in \Pic_R^0(E)[\overline{\beta \alpha}]$, and $D_Q \in \Pic_R^0(E)/ R \beta\alpha  \Pic_R^0(E)$.  Then
    \[
    T_{\beta \alpha}(D_P, D_Q) \bmod{ (\GG_m^{\otimes_\ZZ R})^{R\overline\alpha}}
    = T_{\alpha}( \overline{\beta} \cdot D_P, D_Q \bmod R\alpha  \Pic_R^0(E) ).
    \]
    Suppose 
    $D_P \in \Pic_R^0(E)[\overline{\beta}]$, and $D_Q \in \Pic_R^0(E)/ R \beta\alpha  \Pic_R^0(E)$.  Then
    \[
    T_{\beta\alpha}(D_P, D_Q) \bmod{ (\GG_m^{\otimes_\ZZ R})^{R\overline\beta}}
    = T_{\beta}( D_P, \alpha \cdot D_Q \bmod R\beta \Pic_R^0(E) ).
    \]
    \item Galois invariance: Suppose $E$ is defined over a field $K$.
 Let $\sigma \in \operatorname{Gal}(\overline K/K)$.  Then
\[
T_\alpha(D_P,D_Q)^\sigma = T_{\alpha}(D_P^\sigma, D_Q^\sigma).
\]
    \end{enumerate}

\end{theorem}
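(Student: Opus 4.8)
\emph{Strategy and well-definedness.} The plan is to reduce every assertion to a direct manipulation of the auxiliary function $f_P$, characterized up to a constant by $\div(f_P)=\overline\alpha\cdot D_P$, using three facts established earlier: that $\div$ is an $R$-module homomorphism with $\div(f^\gamma)=\gamma\cdot\div(f)$ (see \eqref{eqn:conjf}); the evaluation rules $(f^\gamma)(D)=f(D)^\gamma$ and $f(\gamma\cdot D)=f(D)^{\overline\gamma}$; and Weil reciprocity, Theorem~\ref{thm: weilrecipgen}. Since by \eqref{eqn:DP} every class contains a representative with support disjoint from any prescribed divisor, representatives may be changed freely. For well-definedness: existence of $f_P$ is exactly the hypothesis $D_P\in\Pic^0_R(E)[\overline\alpha]$; altering $f_P$ by a constant of $(\overline K^*)^{\otimes_\ZZ R}$ leaves $f_P(D_Q)$ unchanged because $\deg D_Q=0$; altering $D_P$ to $D_P+\div(h)$ multiplies $f_P$ by $h^{\overline\alpha}$ up to a constant, and $h^{\overline\alpha}(D_Q)=h(D_Q)^{\overline\alpha}$ is an $\overline\alpha$-power; and altering $D_Q$ to $D_Q+D'+\div(h)$ with $D'\in R\alpha\Pic^0_R(E)$ contributes $f_P(D')$, an $\overline\alpha$-power by $f(\gamma\cdot D)=f(D)^{\overline\gamma}$, and $f_P(\div h)=\overline{h(\div f_P)}=\overline{h(\overline\alpha\cdot D_P)}$ by Weil reciprocity, again an $\overline\alpha$-power. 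Throughout one uses that $(\GG_m^{\otimes_\ZZ R})^{R\overline\alpha}=R\overline\alpha\cdot\GG_m^{\otimes_\ZZ R}$ is a (two-sided) $R$-submodule, so these terms die in the codomain. Bilinearity is then immediate: additivity in the second slot is additivity of evaluation, and $f_{P+P'}=f_Pf_{P'}$ gives it in the first.

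\emph{Twisted sesquilinearity.} The key observation is that for $\gamma\in R_{\overline\alpha}$ one has $\overline\alpha\gamma=\gamma^{(\overline\alpha)}\overline\alpha$, hence $\div(f_P^{\gamma^{(\overline\alpha)}})=\gamma^{(\overline\alpha)}\overline\alpha\cdot D_P=\overline\alpha\cdot(\gamma D_P)$; in particular $\gamma D_P\in\Pic^0_R(E)[\overline\alpha]$ and one may take $f_{\gamma P}=f_P^{\gamma^{(\overline\alpha)}}$. Then, applying $(f^c)(D)=f(D)^c$ and $f(\delta\cdot D)=f(D)^{\overline\delta}$,
\[
T_\alpha(\gamma D_P,\delta D_Q)=f_P^{\gamma^{(\overline\alpha)}}(\delta D_Q)=f_P(D_Q)^{\overline\delta\,\gamma^{(\overline\alpha)}}=T_\alpha(D_P,D_Q)^{\overline\delta\,\gamma^{(\overline\alpha)}}.
\]
When $R$ is commutative this is transparent ($\gamma^{(\overline\alpha)}=\gamma$); in general I would carry out this chain carefully in the conventions of the previous section, tracking the order-reversal $\overline{\gamma\delta}=\overline\delta\,\overline\gamma$ and the precise meaning of iterated actions.

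\emph{Compatibility, coherence, Galois invariance.} Compatibility I would get from $f_{\phi_*P}=\phi_*f_P$ (as $\phi_*\div=\div\phi_*$): the projection formula $(\phi_*f)(D')=f(\phi^*D')$ and the identity $\phi^*\phi_*=[\deg\phi]$ on $\Pic^0$ give $T_\alpha(\phi_*D_P,\phi_*D_Q)=f_P(\phi^*\phi_*D_Q)\equiv f_P([\deg\phi]D_Q)=T_\alpha(D_P,D_Q)^{\deg\phi}$, the error being a $\div(h)$-term already shown to vanish in the codomain. For the first coherence identity, $\overline{\beta\alpha}\cdot D_P=\overline\alpha\cdot(\overline\beta D_P)$ shows $T_{\beta\alpha}(D_P,D_Q)$ and $T_\alpha(\overline\beta D_P,D_Q)$ are computed from the \emph{same} function, and $R\overline{\beta\alpha}R\subseteq R\overline\alpha R$ makes the codomain reduction legitimate. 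For the second, writing $\overline\beta\cdot D_P=\div(h)$ makes the function for $T_{\beta\alpha}$ equal to $h^{\overline\alpha}$, so $h^{\overline\alpha}(D_Q)=h(D_Q)^{\overline\alpha}=h(\alpha\cdot D_Q)=T_\beta(D_P,\alpha D_Q)$, using $R\overline{\beta\alpha}R\subseteq R\overline\beta R$ and that $\alpha D_Q$ is well-defined modulo $R\beta\Pic^0_R(E)$. Galois invariance is immediate: $f_P^\sigma$ has divisor $\overline\alpha\cdot D_P^\sigma$, so it is admissible for $T_\alpha(D_P^\sigma,D_Q^\sigma)$, and evaluation is $\operatorname{Gal}(\overline K/K)$-equivariant because $\sigma$ acts only on the $\GG_m$- and $E$-coordinates and hence commutes with the formal $R$-action.

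\emph{Main obstacle.} No single identity is deep; the hard part will be the rank-four bookkeeping in twisted sesquilinearity and in checking that the various error terms land in $(\GG_m^{\otimes_\ZZ R})^{R\overline\alpha}$ — one must handle the order-reversing conjugation, the twist $\gamma\mapsto\gamma^{(\overline\alpha)}$, and the iterated-action conventions uniformly, and confirm the inclusions $R\overline\alpha\subseteq R\overline\alpha R$ and $R\overline{\beta\alpha}R\subseteq R\overline\alpha R\cap R\overline\beta R$ as well as the $R$-module closure of these power subgroups. In rank two all twists collapse and the proof is the essentially formal exercise above.
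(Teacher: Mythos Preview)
Your proposal is correct and follows essentially the same route as the paper: well-definedness via Weil reciprocity and the evaluation rules, twisted sesquilinearity via $\overline\alpha\gamma=\gamma^{(\overline\alpha)}\overline\alpha$ so that $f_{\gamma P}=f_P^{\gamma^{(\overline\alpha)}}$, compatibility via $\phi_*f_P$ and $\phi^*\phi_*\sim\deg\phi$ on $\Pic^0_R$, coherence by recognizing the same function computes both sides, and Galois invariance from $(\gamma D)^\sigma=\gamma D^\sigma$. Your treatment of the second coherence identity (writing $\overline\beta D_P=\div(h)$ explicitly) is in fact a bit more detailed than the paper's ``immediate from the definitions''; the one subtlety you flag but do not fully unpack---that in rank four $\overline{g(D_P)^\alpha}$ need not equal $\overline{g(D_P)}^{\,\overline\alpha}$ yet still lies in $(\GG_m^{\otimes_\ZZ R})^{R\overline\alpha}$---is exactly the point the paper isolates in a footnote.
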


\begin{proof}
\emph{Choice of representative $D_Q$ in the divisor class:}
Suppose $D_Q \sim D_Q'$.  Then for some $g \in (K(E)^*)^{\otimes_\ZZ R}$, having divisor $\div(g) = D_Q - D_Q'$, and using Weil reciprocity\footnote{There's a subtlety here.  Observe that $\overline{ (g^\beta)^\alpha }=\overline{g^{\alpha\beta}} = g^{\overline{\alpha\beta}} = g^{\overline\beta \; \overline\alpha} = g^{\overline{\alpha}^{(\overline\beta)} \overline{\beta}} = (g^{\overline{\beta}})^{\overline\alpha^{(\overline\beta)}}$, so that it is only in the case that $R$ is commutative that $\overline{g^\alpha} = \overline{g}^{\overline\alpha}$.  However, it is still true that $\overline{g(D_P)^\alpha} \in (\GG_m^{\otimes_\ZZ R})^{R\overline\alpha}$. } (Theorem~\ref{thm: weilrecipgen}),
\[
	f_P(D_Q)f_P(D_Q')^{-1} 
=
f_P(\div(g)) = \overline{g(\div(f_P))}
= \overline{g( \overline{\alpha} \cdot D_P )}
= \overline{g(D_P)^{{\alpha}}}
\in 
(\GG_m^{\otimes_\ZZ R})^{R\overline\alpha}.
\]

\emph{Choice of $D_Q$ modulo $R\alpha\Pic_R^0(E)$:}
\[
f_P(D_Q + \gamma \alpha \cdot D')
=
f_P(D_Q) f_P(D')^{\overline{\alpha}\;\overline\gamma}.
\]

\emph{Choice of representative $D_P$ in the divisor class:}
Suppose $D_P \sim D_P'$.  Notice that if we let $\div(f_P) =\overline{\alpha} \cdot D_P$ and $\div(f_P') = \overline{\alpha} \cdot D_P'$, then 
\[
\div(f_P') = \div(f_P) + \overline{\alpha} \cdot (D_P' - D_P).
\]
Hence 
$f_P' = f_P g^{\overline\alpha}$ where $\div(g) = D_P' - D_P$, 
which is principal by assumption.  Then
\[
f_P'(D_Q) = f_P(D_Q) g(D_Q)^{\overline\alpha}.
\]

\emph{Choice of $f_P$:}  Any two choices of $f_P$ differ by a constant scalar, but $D_Q$ has degree $0$ by assumption, so the constant cancels in the formula $f_P(D_Q)$.

\emph{Bilinearity:}
Let $D_P$, $D_P' \in \Div^0_R(E)[\overline{\alpha}]$ and $\div(f_P) = \overline{\alpha} \cdot D_P$, $\div(f_P') = \overline{\alpha} \cdot D_P'$.  Then
\[
{T}_\alpha(D_P + D_P',D_Q) = f_P(D_Q) f_{P}'(D_Q) = {T}_\alpha(D_P,D_Q)  {T}_\alpha(D_P',D_Q).
\]
In the other factor,
\[
{T}_\alpha(D_P,D_Q+D_Q') =
f_P(D_{Q} + D_Q') = f_P(D_Q)f_P(D_Q') =
{T}_\alpha(D_P,D_Q)  {T}_\alpha(D_P,D_Q').
\]

\emph{Twisted sesquilinearity:}
Suppose $f_P$ has divisor $\overline{\alpha} \cdot D_P$.
In evaluating $T_\alpha(\gamma \cdot D_P, \delta \cdot D_Q)$, we evaluate the function with divisor
$\overline{\alpha} \cdot \gamma \cdot D_P = \gamma^{(\overline{\alpha})} \cdot \overline{\alpha} \cdot D_P$ at the divisor $\delta \cdot D_Q$.  Since $\div(f_P^{{\mu}}) = \mu \cdot \div(f_P)$ by \eqref{eqn:conjf}, this becomes
\[
	f_P( \delta \cdot D_Q)^{{\gamma}^{(\overline\alpha)}} = f_P(D_Q)^{\overline\delta\;{\gamma}^{(\overline\alpha)}}.
\]

\emph{Compatibility:}
    Observe that $\overline{\alpha} \cdot \phi_* D_P = \phi_* (\overline{\alpha} \cdot D_P)$.  Therefore, in the computation of $T_\alpha(\phi_* D_P, \phi_* D_Q)$, we evaluate $\phi_* f_{P}$ at $\phi_* D_Q$.  We have
    \[
	    \phi_* f_{P} (\phi_* D_{Q}) = f_P (\phi^* \phi_* D_Q) = f_P(D_Q)^{\deg \phi},
    \]
    where the last equality depends upon the fact that $\phi^* \phi_* D \sim (\deg \phi) D$ for $D \in \Pic^0_R(E)$.

    \emph{Coherence:}
    Both statements follow immediately from the definitions.

    \emph{Galois invariance:}  This is immediate, since by our definition of the actions of $R$ on the various entities involved, we have
    $(\gamma \cdot D)^\sigma = \gamma \cdot D^\sigma$ for any $\gamma \in R$.
\end{proof}

 \begin{remark}
   In cryptographic applications, we typically restrict to inputs defined over a field $\FF_q$.  If $R$ is commutative, to obtain canonical representatives of the codomain, it may be useful to post-compose with a map
\[
(\FF_q^*)^{\otimes_\ZZ R} / ((\FF_q^*)^{\otimes_\ZZ R})^{\overline\alpha} \rightarrow \mu_{\overline\alpha} := \{ u \in \mu_{N(\alpha)}^{\otimes_\ZZ R} \subseteq (\FF_q^*)^{\otimes_\ZZ R} : u^{\overline\alpha} = 1 \},
\]
given by
\[
x \mapsto x^{(q-1)\overline\alpha^{-1}}.
\]
\end{remark}

\begin{proposition}
\label{prop:tate-n}
    Let $n \in \ZZ$.  
For positive integers $n$, let
\[
t_n: E[n] \times E/[n]E \rightarrow \GG_m/\GG_m^n
\]
denote the usual Tate-Lichtenbaum pairing as in Section~\ref{sec:backtate}.
    Let 
    $D_P  \in \Pic_R^0(E)[n]$ and $D_Q  \in \Pic_R^0(E)$.
    Suppose
    \[
D_P \sim \sum_i \tau_i \cdot \left( (P_i) - (\mathcal{O})\right), \quad
D_Q \sim \sum_i \tau_i \cdot \left( (Q_i) - (\mathcal{O}) \right).
\]
    Then
    \[
	    T_n(D_P,D_Q) =
	    \prod_{i,j=0}^{r-1} t_n(P_i, Q_j)^{ \overline{\tau_j}\tau_i}.
	 \]
        Furthermore, when both of the following quantities are defined, we have
    \[
    {T}_{N(\alpha)}(D_P,D_Q) \equiv {T}_\alpha(D_P,D_Q)^\alpha \pmod{(\GG_m^{\otimes_\ZZ R})^{R\overline\alpha}}
    \]
\end{proposition}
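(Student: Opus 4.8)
The plan is to prove the two assertions in turn, in each case using the independence–of–choices established in Theorem~\ref{thm:tateprops} to select maximally convenient representatives and functions, and then unwinding the evaluation conventions of Section~3. For the first identity I would fix a generic auxiliary point $S\in E$ and work with the representatives $D_P\sim\sum_i\tau_i\bigl((P_i+S)-(S)\bigr)$ (as in \eqref{eqn:DP}) and $D_Q\sim\sum_j\tau_j\bigl((Q_j)-(\mathcal{O})\bigr)$, which for generic $S$ have disjoint support and put all the point evaluations below at finite nonzero values. Since $\overline n=n$, equation \eqref{eqn:hpgp} specializes to $f_P=\prod_i f_i^{\tau_i}$ with $\div(f_i)=n\bigl((P_i+S)-(S)\bigr)$. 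I would then expand $T_n(D_P,D_Q)=f_P(D_Q)$ by first peeling the $\tau_j$ off the divisor via $f(\tau_j\cdot D)=f(D)^{\overline{\tau_j}}$ and then peeling the $\tau_i$ off the function via $(f_i^{\tau_i})(D)=(f_i(D))^{\tau_i}$; as the two left actions compose by $(x^{\tau_i})^{\overline{\tau_j}}=x^{\overline{\tau_j}\tau_i}$, this yields $T_n(D_P,D_Q)=\prod_{i,j}f_i\bigl((Q_j)-(\mathcal{O})\bigr)^{\overline{\tau_j}\tau_i}$, which is exactly where the ordering in the stated formula originates. It remains to identify $f_i\bigl((Q_j)-(\mathcal{O})\bigr)$ with $t_n(P_i,Q_j)$: the hypothesis $D_P\in\Pic_R^0(E)[n]$ forces $[n]P_i=\mathcal{O}$ by uniqueness of the $\tau_i$-decomposition (equivalently, the isomorphism $\Pic_R^0(E)\cong R\otimes_\ZZ E$), so $t_n(P_i,Q_j)$ is defined, and the triple $\bigl((P_i+S)-(S),\,(Q_j)-(\mathcal{O}),\,f_i\bigr)$ is an admissible one in Definition~\ref{defn: tate}. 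Finally, altering some $t_n(P_i,Q_j)$ by an $n$-th power of $K^*$ alters $\prod_{i,j}t_n(P_i,Q_j)^{\overline{\tau_j}\tau_i}$ by an element of $(\GG_m^{\otimes_\ZZ R})^n$, precisely the subgroup the codomain of $T_n$ is taken modulo, so the formula is well-posed.

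For the second identity the key observation is that $N(\alpha)=\alpha\overline\alpha\in\ZZ$, so $\overline{N(\alpha)}=N(\alpha)$, and $D_P\in\Pic_R^0(E)[\overline\alpha]$ already forces $N(\alpha)D_P=\alpha(\overline\alpha D_P)=0$; thus ``both defined'' just means $T_\alpha(D_P,D_Q)$ is defined. I would pick $f_P$ with $\div(f_P)=\overline\alpha\cdot D_P$ and a representative of $D_Q$ with disjoint support, so $T_\alpha(D_P,D_Q)=f_P(D_Q)$. By \eqref{eqn:conjf}, $\div(f_P^{\alpha})=\alpha\cdot\div(f_P)=\alpha\overline\alpha\cdot D_P=\overline{N(\alpha)}\cdot D_P$, and $f_P^{\alpha}$ has the same support as $f_P$ because $R$ is a domain, so $f_P^{\alpha}$ is an admissible choice in the definition of $T_{N(\alpha)}$. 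Evaluating, $T_{N(\alpha)}(D_P,D_Q)\equiv(f_P^{\alpha})(D_Q)=(f_P(D_Q))^{\alpha}=T_\alpha(D_P,D_Q)^{\alpha}$, the first congruence being modulo $(\GG_m^{\otimes_\ZZ R})^{N(\alpha)}$. To land the statement in its stated form I would reduce both sides modulo $(\GG_m^{\otimes_\ZZ R})^{R\overline\alpha}$: since $N(\alpha)=\alpha\overline\alpha\in R\overline\alpha$ we have $(\GG_m^{\otimes_\ZZ R})^{N(\alpha)}\subseteq(\GG_m^{\otimes_\ZZ R})^{R\overline\alpha}$, and since $\alpha\,R\overline\alpha R\subseteq R\overline\alpha R$ the $\alpha$-th power of the coset $T_\alpha(D_P,D_Q)$ is itself well-defined modulo $(\GG_m^{\otimes_\ZZ R})^{R\overline\alpha}$; both sides then coincide there with the class of $(f_P(D_Q))^{\alpha}$.

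Neither step hides a genuine difficulty; the work is bookkeeping, concentrated in the non-commutative case. In the first part the one thing to get right is the left--left interaction of the two $R$-actions — the vinculum in $f(\alpha\cdot D)=f(D)^{\overline\alpha}$ — so that the exponent comes out as $\overline{\tau_j}\tau_i$ and not $\tau_i\overline{\tau_j}$, together with remembering that each $t_n(P_i,Q_j)$ is only a coset. In the second part the care goes into distinguishing the quotients by $(\GG_m^{\otimes_\ZZ R})^{N(\alpha)}$, by $(\GG_m^{\otimes_\ZZ R})^{R\overline\alpha}$, and by $\bigl((\GG_m^{\otimes_\ZZ R})^{R\overline\alpha}\bigr)^{\alpha}$, and checking the asserted congruence is meaningful on the coarsest one. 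If I had to name a main obstacle it would be confirming that the ``formal'' $R$-divisor computation of $T_n$ genuinely degenerates termwise into honest classical Tate pairings, which comes down to choosing the single auxiliary point $S$ generically enough to disjointify all supports at once.
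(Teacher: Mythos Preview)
Your proposal is correct and follows essentially the same argument as the paper. The only cosmetic difference is that you translate $D_P$ by the auxiliary point $S$ while the paper translates $D_Q$ instead; either choice disjointifies the supports and feeds directly into Definition~\ref{defn: tate}, and the second identity is handled identically via $\div(f_P^{\alpha})=N(\alpha)\cdot D_P$.
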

\begin{proof}
By a linear equivalence, assume that
\[
D_P = \sum_i \tau_i \cdot \left( (P_i) - (\mathcal{O})\right), \quad
D_Q = \sum_j \tau_j \cdot \left( (Q_j+S) - (S) \right).
\]
where $S$ is chosen to avoid intersections of supports.
We have from \eqref{eqn:hpgp}, with $f_P = \prod_i f_i^{ \tau_i}$, that
\[
    \div(f_i) = n(P_i) - n(\mathcal{O}).
    \]
We obtain
    \[
	    T_n(D_P,D_Q) =
    \prod_{j} \left(\prod_i f_i( (Q_j + S) - (S) )^{ \tau_i} \right)^{\overline{\tau_j}}.
    \]
    That shows the first statement.  For the second,
    suppose $\div(f_P) = \overline\alpha \cdot D_P$.  Then for any divisor $D_Q$ with sufficiently disjoint support,
\[
(f_{P}^{{\alpha}})(D_Q) = f_{P}(D_Q)^{{\alpha}}.
\]
On the left, we see this is by definition a representative of $T_n(D_P,D_Q)$ in $\GG_m^{\otimes_\ZZ R}/(\GG_m^{\otimes_\ZZ R})^n$, since $\div(f_P^\alpha) = \alpha \cdot \div(f_P) = n D_P$.  However, looking at the right, this is also a representative of $T_\alpha(D_P,D_Q)^\alpha$ in $\GG_m^{\otimes_\ZZ R}/(\GG_m^{\otimes_\ZZ R})^{R\overline\alpha}$.
\end{proof}

    In particular, in the rank $2$ case (i.e. $\tau_0 = 1, \tau_1 = \tau$),
    \[
	    \overline{\tau} = \Tr(\tau) - \tau, \quad \overline{\tau}\tau = N(\tau),
    \]
    which gives (continuing the notation of Proposition~\ref{prop:tate-n}, in particular the definition of $P_i$, $Q_i$), 
    \begin{equation}
	    \label{eqn:Tn2}
     {T}_n (D_P,D_Q) = 
     \left( 
     t_n(P_0, Q_0)t_n(P_1,Q_1)^{N(\tau)} t_n(P_0,Q_1)^{\Tr(\tau)}\right)
     \left( t_n(P_1, Q_0) t_n(P_0,Q_1)^{-1}
     \right)^{\tau}.
    \end{equation}
    Let $\langle x,y \rangle$ be a bilinear pairing on $\ZZ[\tau]$.  Then
\begin{align*}
\langle x_1 + \tau x_2 , y_1 + \tau y_2 \rangle 
&:= 
\langle x_1, y_1 \rangle + N(\tau)\langle x_2, y_2 \rangle
+ \Tr(\tau) \langle x_1, y_2 \rangle + \tau \left( 
\langle x_2, y_1 \rangle - \langle x_1, y_2 \rangle
\right)
\end{align*}
defines a sesquilinear pairing (conjugate linear in second entry).  This explains the formula \eqref{eqn:Tn2}, and in fact we could define the pairing $T_n(D_P,D_Q)$ from $t_n(P_i,Q_i)$ directly by using Proposition~\ref{prop:tate-n} as a definition.

\begin{remark}
\label{rem:preimage}
There does not seem to be an analogous construction for $T_\alpha(D_P,D_Q)$ in terms of $t_n(P_i,Q_i)$.  The best we can do requires computing some preimages under multiplication maps.  Specifically, by coherence,
\[
	T_\alpha(D_P, \overline{\alpha} \cdot D_S) = T_n(D_P,D_S).
\]
To use this for calculation, letting $r=2$ (the commutative case) for simplicity, suppose  $D_S = (S_0) - (\mathcal{O}) + \tau \cdot \left( (S_1) - (\mathcal{O}) \right)$.  Then suppose
$\overline\alpha = a + c \tau, \overline\alpha \tau = b + d\tau$, $a,b,c,d \in \ZZ$.  Then
\begin{align*}
\overline{\alpha} \cdot D_S
&=
a(S_0) + b(S_1) - (a+b)(\mathcal{O}) + \tau \cdot \left(
c(S_0) + d(S_1) - (c+d)(\mathcal{O})
\right) \\
&\sim
([a]S_0 + [b]S_1) - (\mathcal{O})
+ \tau \cdot \left(
([c]S_0 + [d]S_1) - (\mathcal{O})
\right).
\end{align*}
Thus, we can give an expression for $T_\alpha(D_P, D_Q)$ in terms of the classical Tate-Lichtenbaum pairing applied to combinations of $P_0, P_1,S_0, S_1$ provided the $S_i$ solve
\[
[a]S_0 + [b]S_1 = Q_0, \quad
[c]S_0 + [b]S_1 = Q_1.
\]
\end{remark}

A principal ideal ring is one in which all right and left ideals are principal.

\begin{lemma}
	\label{lem:tatenon}
	Let $R$ be a ring with an involution called conjugation, $I$ be a principal two-sided ideal of $R$, and suppose that $R/I$ is a finite principal ideal ring.
	Let $t: A \times B \rightarrow R/I$ be a sesquilinear form on $R$-modules (conjugate linear in one variable).
	Suppose that $t$ is non-degenerate.  Then if $a \in A$ has annihilator $I$, then $t(a, \cdot)$ is surjective.  Furthermore, if $b \in B$ has annihilator $I$, then $t(\cdot, b)$ is surjective.
\end{lemma}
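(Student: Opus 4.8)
The plan is to reduce the surjectivity statement to a counting argument about finite modules, using the structure theory of finite principal ideal rings. First I would fix $a \in A$ with annihilator exactly $I$ and consider the $R$-module homomorphism $\lambda_a : B \to R/I$ given by $\lambda_a(b) = t(a,b)$ (say $t$ is conjugate-linear in the first variable, so this is genuinely $R$-linear in the second; the other case is symmetric after swapping the roles). Its image $\lambda_a(B)$ is a submodule of $R/I$, hence a (left or right) ideal of $R/I$; since $R/I$ is a principal ideal ring, $\lambda_a(B) = (c)$ for some $c \in R/I$. I want to show $c$ is a unit, equivalently $(c) = R/I$.

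The key step is to exploit non-degeneracy to rule out $(c)$ being a proper ideal. Suppose $\lambda_a(B) \subseteq \mathfrak{m}$ for a maximal ideal $\mathfrak{m}$ of the finite ring $R/I$ (any proper ideal lies in one). Then for every $b \in B$, $t(a,b) \in \mathfrak{m}$. I would like to produce from this a scalar $\mu \in R$, $\mu \notin I$, with $\mu a = 0$, contradicting that $a$ has annihilator exactly $I$. The idea: in a finite principal ideal ring, the annihilator of the ideal $\mathfrak{m}$ is a nonzero ideal $\mathfrak{a}$; pick $0 \neq \mu \in R$ mapping into $\mathfrak{a}$ (so $\mu \notin I$ but $\mu \mathfrak{m} = 0$ in $R/I$, using two-sidedness of $I$). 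Then for all $b$, $\overline{\mu}\, t(a,b) = t(\mu a, b)$ (by conjugate-linearity in the first slot) lies in $\overline{\mu}\,\mathfrak{m}$; choosing the annihilating side correctly this is $0$, so $t(\mu a, b) = 0$ for all $b$, and non-degeneracy forces $\mu a = 0$ — contradiction. One has to be a little careful to match left/right ideals against the side on which conjugation and the module action operate, and to check that conjugation carries the relevant annihilator ideal to the one we need; this is where the finiteness and the principal-ideal hypothesis on $R/I$ do the real work, since in a finite commutative principal ideal ring (a product of chain rings) every ideal is its own double annihilator, and the noncommutative principal ideal case behaves similarly componentwise.

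Having shown $\lambda_a(B) = R/I$, i.e. $t(a,\cdot)$ is surjective, the statement for $b \in B$ with annihilator $I$ follows by the identical argument applied to $\rho_b : A \to R/I$, $\rho_b(x) = t(x,b)$, now linear (not conjugate-linear) in $x$, again landing in a principal ideal which non-degeneracy forces to be all of $R/I$. The main obstacle I anticipate is the bookkeeping around sidedness in the noncommutative quaternionic case: ensuring that "$t(a,\cdot)$ has image a left ideal" is consistent with "conjugation sends left annihilators to right annihilators" so that the contradiction closes; once the correct side is pinned down, the double-annihilator property of ideals in a finite principal ideal ring makes the argument go through cleanly.
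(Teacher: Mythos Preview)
Your proposal is correct and follows essentially the same line as the paper's proof. Both arguments show that the image $t(a,B)\subseteq R/I$, if proper, has a nonzero annihilator; pulling such an annihilating element back through sesquilinearity contradicts non-degeneracy and $\operatorname{ann}(a)=I$. The only cosmetic difference is that the paper argues directly that every proper submodule of $R/I$ is cyclic of the form $(R/I)/J$ with $J\neq 0$ (hence has annihilator strictly larger than $I$), whereas you first pass to a containing maximal ideal and use that maximal ideals in a finite PIR have nonzero annihilator; the paper's route avoids this extra step but the content is the same, including the acknowledged left/right/conjugation bookkeeping.
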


\begin{proof}
	Since $R' := R/I$ is a principal ideal ring, we claim that there is no proper $R$-submodule of $R'$ with annihilator $I$.  Indeed, every submodule $R''$ of $R'$ is cyclic as an $R'$ module, hence of the form $R'' \cong R'/ J$ for some ideal $J$ which is the annihilator of $R''$.  By a cardinality argument, if $R''$ is a proper submodule of $R'$, then $J$ is non-trivial and the annihilator of $R''$ as an $R$-module is strictly larger than $I$.

	Now let $a \in A$ have annihilator $I$.  Then $t(a,B)$ is an $R$-module with annihilator equal to the intersection of the annihilators of all elements $t(a,b) \in R/I$, $b \in B$.  If this intersection is equal to $I$, then we have surjectivity, by the preceding argument.  If not, then there exists some element $r \in R$ which does not annihilate $a$, but does annihilate $t(a,B)$.  These two properties, respectively, have the consequences that there exists $b \in B$ such that $t(ra,b) \neq 0$ by non-degeneracy, but simultaneously that $t(a,\overline{r} b) = 0$.  This contradiction completes the argument that $t(a,\cdot)$ is surjective.  The argument that $t(\cdot,b)$ is surjective is similar.
\end{proof}


\begin{theorem}
\label{thm:tatenon}
    Let $K$ be a finite field over which the endomorphisms of $R$ are defined.  Let $\alpha \in R$, such that $N(\alpha)$ is coprime to $char(K)$ and the discriminant of $R$.  Let $n = N(\alpha)$.  Suppose $K$ contains the $n$-th roots of unity.   Then 
    \[
T_\alpha : \Pic_R^0(E)[\overline{\alpha}](K) \times \Pic_R^0(E)(K) / R \alpha \Pic_R^0(E)(K) \rightarrow (K^*)^{\otimes_\ZZ R} /  ((K^*)^{\otimes_\ZZ R})^{R\overline\alpha}
\] 
is non-degenerate.  Furthermore, if $D_P$ has annihilator $R\overline\alpha R$, then $T_\alpha(D_P, \cdot)$ is surjective; and if $D_Q$ has annihilator $R \alpha R$, then  $T_\alpha(\cdot, D_Q)$ is surjective.
\end{theorem}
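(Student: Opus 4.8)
The strategy is to reduce the non-degeneracy of $T_\alpha$ to the classical non-degeneracy of the Tate-Lichtenbaum pairing $t_n$ (Proposition~\ref{prop:tateprops-classical}(2)), using the relation between $T_n$ and $t_n$ from Proposition~\ref{prop:tate-n}, together with the coherence property in Theorem~\ref{thm:tateprops}(3), and then invoke the module-theoretic Lemma~\ref{lem:tatenon} for the surjectivity statements. First I would establish non-degeneracy of $T_n$ on $\Pic_R^0(E)[n](K) \times \Pic_R^0(E)(K)/n\Pic_R^0(E)(K)$ with values in $(K^*)^{\otimes_\ZZ R}/((K^*)^{\otimes_\ZZ R})^n$. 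Via the isomorphism $\Pic_R^0(E) \cong R \otimes_\ZZ E$, an element $D_P$ of the left-hand side corresponds to a tuple $(P_0,\dots,P_{r-1})$ of $n$-torsion points of $E(K)$ (after possibly enlarging to account for the $R$-structure), and by Proposition~\ref{prop:tate-n}, $T_n(D_P,D_Q) = \prod_{i,j} t_n(P_i,Q_j)^{\overline{\tau_j}\tau_i}$. The coefficients $\overline{\tau_j}\tau_i$ form (up to the unit $\det$, which is $\pm\sqrt{\operatorname{disc} R}$ and hence coprime to $n$ by hypothesis) an invertible change of basis on $R \otimes \ZZ/n$, so $T_n(D_P,\cdot) = 1$ forces all $t_n(P_i,\cdot) = 1$, hence all $P_i = \mathcal{O}$ by classical non-degeneracy, hence $D_P = 0$; the argument in the other variable is symmetric using the second clause of Proposition~\ref{prop:tateprops-classical}(2).

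Next I would pass from $T_n$ to $T_\alpha$. Write $n = N(\alpha) = \overline\alpha\alpha$. By coherence (Theorem~\ref{thm:tateprops}(3), second form with $\beta = \overline\alpha$, using $N(\alpha) = \overline\alpha\alpha$), for $D_P \in \Pic_R^0(E)[\overline\alpha]$ we have $T_n(D_P, D_Q) \equiv T_{\overline\alpha}(D_P, \alpha\cdot D_Q) \pmod{(\cdots)^{R\alpha}}$; and by the first form of coherence together with Proposition~\ref{prop:tate-n}'s final identity $T_{N(\alpha)}(D_P,D_Q) \equiv T_\alpha(D_P,D_Q)^\alpha \pmod{(\GG_m^{\otimes_\ZZ R})^{R\overline\alpha}}$. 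So if $T_\alpha(D_P,\cdot)$ were trivial modulo $R\overline\alpha$-powers, then $T_n(D_P,\cdot)$ would be trivial modulo $R\overline\alpha$-powers as well; to conclude $D_P = 0$ I need that $D_P$ being annihilated by $\overline\alpha$ together with $T_n(D_P, \cdot)$ landing in $(\cdots)^{R\overline\alpha}$ is enough to force triviality — here I would use that $D_P \in \Pic_R^0(E)[\overline\alpha] \subseteq \Pic_R^0(E)[n]$ and that, restricted to the $\overline\alpha$-torsion, the relation $n = \overline\alpha\alpha$ lets one divide, i.e. $(\cdots)^{R\overline\alpha}$-triviality of $T_n$ pulls back to genuine triviality of the $T_{\overline\alpha}$-values which detects $D_P$. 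Non-degeneracy in the second variable $D_Q$ is handled symmetrically, noting $\Pic_R^0(E)/R\alpha\Pic_R^0(E)$ receives $\Pic_R^0(E)/n\Pic_R^0(E)$.

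Finally, for the surjectivity statements I would apply Lemma~\ref{lem:tatenon} with $I = R\overline\alpha R$ (respectively $R\alpha R$): the hypotheses are met because $R/R\overline\alpha R$ is finite (as $N(\alpha)$ is nonzero) and is a principal ideal ring — in the rank $2$ commutative case because $R/(\overline\alpha)$ is a quotient of a Dedekind-like order and, since $N(\alpha)$ is coprime to $\operatorname{disc} R$, $R$ is locally maximal at the relevant primes so $R/(\overline\alpha)$ is a product of quotients of DVRs, hence principal; in the rank $4$ case because $R$ is a maximal order and localizing away from primes dividing $\operatorname{disc} R$ makes it a product of matrix algebras over unramified local rings, whose finite quotients are principal ideal rings. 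Then Lemma~\ref{lem:tatenon} directly gives: if $D_P$ has annihilator exactly $R\overline\alpha R$, then $T_\alpha(D_P,\cdot)$ is surjective, and dually for $D_Q$.

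The main obstacle I anticipate is the bookkeeping in the middle step: carefully controlling how the quotient $(\cdots)^{R\overline\alpha}$ in the target of $T_\alpha$ interacts with the honest quotient $(\cdots)^n$ in the target of $T_n$ under the coherence identities, and verifying that no degeneracy is spuriously introduced when passing between $\overline\alpha$-torsion and $n$-torsion — in the non-commutative case this requires being attentive to the distinction between $\alpha\Pic_R^0(E)$ and $R\alpha\Pic_R^0(E)$ and to the twisting $\gamma \mapsto \gamma^{(\overline\alpha)}$, so the cleanest route may be to first prove the commutative (rank $2$) case in full and then indicate the adjustments for rank $4$.
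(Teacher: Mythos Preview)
Your overall architecture matches the paper's: reduce $T_\alpha$ to $T_n$ via Proposition~\ref{prop:tate-n}, reduce $T_n$ to the classical $t_n$, and obtain the surjectivity statements from Lemma~\ref{lem:tatenon} after checking that $R/R\overline\alpha R$ is a finite principal ideal ring. Two points deserve sharpening.

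\textbf{Left non-degeneracy.} The obstacle you flag is real, and the abstract congruence $T_{N(\alpha)}\equiv T_\alpha^{\alpha}\pmod{(\GG_m^{\otimes_\ZZ R})^{R\overline\alpha}}$ is not by itself enough: from ``$T_\alpha(D_P,\cdot)$ is always an $\overline\alpha$-power'' you only get ``$T_n(D_P,\cdot)$ is always an $R\overline\alpha$-power'', which is weaker than ``always an $n$-th power'' and so does not feed into the non-degeneracy of $T_n$. The fix (and this is exactly what the paper does) is to drop down to honest representatives: with $\div(f_P)=\overline\alpha\cdot D_P$, the element $f_P(D_Q)^{\alpha}$ is a representative of $T_n(D_P,D_Q)$, so for some $D_Q$ it is not an $n$-th power; hence $f_P(D_Q)$ itself cannot be an $\overline\alpha$-power, and that element represents $T_\alpha(D_P,D_Q)$.

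\textbf{Right non-degeneracy.} This is \emph{not} symmetric to the left side, and ``$\Pic_R^0(E)/R\alpha\Pic_R^0(E)$ receives $\Pic_R^0(E)/n\Pic_R^0(E)$'' does not close the argument. Lifting $D_Q$ is fine, but non-degeneracy of $T_n$ only produces $D_P\in\Pic_R^0(E)[n]$, whereas you need $D_P\in\Pic_R^0(E)[\overline\alpha]$. The paper handles this by choosing $\beta\in\ZZ[\alpha]$ coprime to $\alpha$ with $m:=\alpha\beta\in\ZZ$ dividing $n$, lifting $D_Q$ to the form $\beta\cdot D_Q'$ modulo $Rm\Pic_R^0(E)$, and then using that a single representative $f_P(D_Q')$ with $\div(f_P)=mD_P=\overline\alpha\cdot(\overline\beta D_P)$ simultaneously computes $T_m(D_P,D_Q')$ and a $T_\alpha$-value; coprimality of $\beta$ and $\alpha$ then lets one pass back to $T_\alpha(\,\cdot\,,D_Q)$. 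You should expect to need a device of this kind rather than a symmetry argument.
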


\begin{proof}
	First, a few preliminaries.
	Using the fact that $K^*$ is cyclic of order divisible by $\overline{\alpha}$, the target $(K^*)^{\otimes_\ZZ R} / ((K^*)^{\otimes_\ZZ R})^{R\overline\alpha} \cong  R/R\overline\alpha R$ as $R$-modules, and this is finite. 
	We wish to apply Lemma~\ref{lem:tatenon}. 

	If $R$ is an imaginary quadratic order, then its quotient $R/\overline\alpha R$ is a principal ideal ring (since $N(\alpha)$ is coprime to the discriminant).

	If $R$ is an order in a quaternion algebra, then $R \otimes \QQ_p \cong M_2(\QQ_p)$ for $p$ not dividing the discriminant of $R$.  This implies, in particular, that $R / p^k R \cong M_2(\ZZ/p^k\ZZ)$, which is a principal ideal ring.  By assumption, $N(\alpha)$ is coprime to the discriminant. 
	For any prime $\overline\alpha$, the ring $R/ R\overline{\alpha}R$ is a quotient of such a ring, hence a principal ideal ring.  
	In general, $R / R \overline\alpha R$ is a product of principal ideal rings, hence a principal ideal ring.  

	So by Lemma~\ref{lem:tatenon}, it suffices to check non-degeneracy.
	Consider first the non-degeneracy of $T_n$, $n \in \ZZ$.
Let $D_P$ be given.  We show non-degeneracy on the left by finding $D_Q$ so that $T_n(D_P,D_Q)$ is non-trivial.
By Proposition~\ref{prop:tate-n}, and the non-degeneracy of the traditional Tate pairing $t_n$, we can choose $D_Q$ so that $T_n(D_P,D_Q)$ is non-trivial (e.g., provided $P_0 \neq \mathcal{O}$, choose $Q_i$, $i > 0$ to be $\mathcal{O}$ to simplify the condition).  This depends upon the following fact:  the image of $T_n$ is taken modulo $n$-th powers, hence a non-$n$-th power entry in one position of $\GG_m^{\otimes_\ZZ R}$ implies the element represents a non-trivial coset. Hence $T_n$ is left-non-degenerate.  An exactly similar argument shows $T_n$ is right-non-degenerate.

Now we consider general $\alpha$, with $n = N(\alpha)$.  Suppose $\div(f_P) = \overline\alpha \cdot D_P$.  Then for any divisor $D_Q$ with sufficiently disjoint support, as observed in the proof of Proposition~\ref{prop:tate-n},
\begin{equation}
	\label{eqn:tatenon}
(f_{P}^{{\alpha}})(D_Q) = f_{P}(D_Q)^{{\alpha}}.
\end{equation}
By non-degeneracy of $T_n$, fixing non-trivial $D_P \in \Pic_R^0(E)[\overline\alpha](K) \subseteq \Pic_R^0(E)[n](K)$, one may choose $D_Q \in \Pic_R^0(E)(K)$ so that $T_n(D_P,D_Q)$ is not an $n$-th power.  The expression \eqref{eqn:tatenon} is a representative of $T_n(D_P,D_Q)$, so is not an $n$-th power.  Therefore $f_{P}(D_Q)$ cannot be an $\overline\alpha$-power in $\GG_m^{\otimes_\ZZ R}$.  However, this is a representative of $T_\alpha(D_P,D_Q)$.  Therefore we have shown left non-degeneracy.

On the right, fix a non-trivial $D_Q \in \Pic_R^0(E)(K)/R\alpha \Pic_R^0(E)(K)$.  Choose $\beta \in \ZZ[\alpha]$ such that $(\alpha,\beta)=\ZZ[\alpha]$, and $m := \alpha \beta  \in \ZZ$ and $m$ divides $n$.  By coprimality, we may choose a lift $\beta\cdot D_Q' \in \Pic_R^0(E)(K)/Rm \Pic_R^0(E)(K)$ of $D_Q$.  We know there exists some $D_P \in \Pic_R^0(E)[m](K)$ so that $T_m(D_P,{D_Q'})$ is non-trivial, using the earlier case (since $m$ divides $n$).  
Consider the two quantities 
\[
	T_\alpha(D_P, D_Q), \quad 
	T_m(D_P,{D_Q'}). 
\]
Suppose $\div(f_P) = m D_P = \overline{\alpha} \cdot \overline{\beta}\cdot D_P$.  Then the quantity $f_P(D_Q') \in (K^*)^{\otimes_\ZZ R}$ is a representative of both of the two quantities just displayed, in their respective domains.  Since $T_m(D_P,D_Q')$ is not an $m$-th power in $(K^*)^{\otimes_\ZZ R}$, we observe that $T_\alpha(D_P, D_Q) = T_\alpha(D_P,D_Q')^{\overline\beta}$ is not a $m$-th power, so $T_\alpha(D_P,D_Q')$ is not an $\overline\alpha$ power.  By coprimality, $T_\alpha(D_P,D_Q) = T_\alpha(D_P,D_Q')^{\overline\beta}$ is not an $\overline\alpha$ power.
\end{proof}

\subsection{Generalization of Weil pairing}
Let $\GG_m^{\otimes_\ZZ R}[\overline\alpha] = \{ x \in \GG_m^{\otimes_\ZZ R} : x^{\overline\alpha} = 1^{\otimes 0} \}$, which\footnote{Keep in mind the multiplicative nature of our notation: $1^{\otimes \tau} = 1^{\otimes 1} = 1^{\otimes 0} = x^{\otimes 0}$, all representing the identity element of the $R$-module.} we might call the $\overline\alpha$-th roots of unity in $\GG_m^{\otimes_\ZZ R}$.
We can define a generalization of the Weil pairing
\[
W_{\alpha} : \Pic_R^0(E)[\overline{\alpha}] \times \Pic^0_R(E)[{\alpha}] \rightarrow \GG_m^{\otimes_\ZZ R}[\overline\alpha],\quad
W_\alpha(D_P, D_Q) := f_P(D_Q) \overline{f_Q(D_P)}^{-1},
\]
where $\div(f_P) = \overline{\alpha} \cdot D_P$ and $\div(f_Q) = {\alpha} \cdot D_Q$, where the pairs ($f_P$, $D_Q$) and ($f_Q$, $D_P$) have disjoint support; we reuse the notation from the definition of $T_\alpha$ (Section~\ref{sec:genTate}).

\begin{remark}
    Comparing to $T_\alpha$, we may wish to write
\[
	W_\alpha(D_P,D_Q) \stackrel{?}{=} T_\alpha( D_P, D_Q) {\overline{T_{\overline{\alpha}}(D_Q,D_P)}}^{-1},
\]
but a priori, this is not well-defined, because the validity of the equality depends on the correct choice of representative for the coset of $T_\alpha( D_P, D_Q)$ or $T_{\overline{\alpha}}(D_Q, D_P)$.
\end{remark}

\begin{theorem}
\label{thm:weilprops}
The definition above is well-defined, bilinear, and satisfies:
\begin{enumerate}
	\item Restricted Sesquilinearity:  For $\gamma, \delta$ such that $\gamma^{(\alpha)} = \gamma$ and $\delta^{(\overline\alpha)} = \delta$, we have 
\[
    {W}_\alpha(\gamma \cdot D_P, \delta \cdot D_Q)
    = {W}_\alpha(D_P,D_Q)^{\overline{\delta}\gamma}.
    \]
    \item Conjugate skew-Hermitianity:
    \[
    W_\alpha(D_P,D_Q) = \overline{W_{\overline{\alpha}}(D_Q,D_P)}^{-1}.
\]
 \item Compatibility:   
            Let $\phi: E \rightarrow E'$.   Then 
    \[
    W_\alpha(\phi_* D_P,\phi_* D_Q) = W_\alpha(D_P,D_Q)^{\deg \phi}.
    \]
\item Coherence: For $D_P \in \Pic_R^0(E)[\overline{\beta\alpha}]$, $D_Q \in \Pic^0_R(E)[{\beta\alpha}]$,
    \[
	    W_{\beta\alpha}(D_P,D_Q) = W_{\alpha}( \overline{\beta} \cdot D_P, D_Q) \in \GG_m^{\otimes_\ZZ R}[\overline\alpha], \quad
	    W_{\beta\alpha}(D_P,D_Q) =W_\beta( D_P, {\alpha} \cdot D_Q) \in \GG_m^{\otimes_\ZZ R}[\overline\beta].
    \]
\item Galois invariance: Suppose $E$ is defined over a field $K$.  Let $\sigma \in \operatorname{Gal}(\overline K/K)$; then
\[
W_\alpha(D_P,D_Q)^\sigma = W_\alpha(D_P^\sigma, D_Q^\sigma).
\]
\end{enumerate}
\end{theorem}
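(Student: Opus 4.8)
The plan is to prove Theorem~\ref{thm:weilprops} by mirroring both the classical argument behind Proposition~\ref{prop: weilprop} and the freshly established Theorem~\ref{thm:tateprops}, exploiting that the chosen representative $f_P(D_Q)$ is exactly the representative of $T_\alpha(D_P,D_Q)$ produced in Section~\ref{sec:genTate}, while $f_Q(D_P)$ is the one for $T_{\overline\alpha}(D_Q,D_P)$. The engine throughout is $R$-Weil reciprocity (Theorem~\ref{thm: weilrecipgen}), together with the fact that $\GG_m^{\otimes_\ZZ R}$ is an \emph{abelian} group, so any reordering of factors is free; what is \emph{not} free is that conjugation on $\GG_m^{\otimes_\ZZ R}$ does not commute with the left $R$-action except up to the order-reversal built into Theorem~\ref{thm: weilrecipgen}, and this is the source of all the bookkeeping.

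First I would establish well-definedness by running through the choices one at a time, as in the proof of Theorem~\ref{thm:tateprops}. Replacing $D_P$ by $D_P'\sim D_P$ with $\div(g)=D_P'-D_P$ forces $f_P'=f_P g^{\overline\alpha}$, which multiplies the first factor by $g(D_Q)^{\overline\alpha}$; meanwhile $f_Q$ is unchanged but $f_Q(D_P')=f_Q(D_P)\,f_Q(\div g)=f_Q(D_P)\,\overline{g(\div f_Q)}=f_Q(D_P)\,\overline{g(D_Q)^{\overline\alpha}}$ by Weil reciprocity, so the second factor $\overline{f_Q(D_P)}^{-1}$ picks up exactly $g(D_Q)^{-\overline\alpha}$ and the two contributions cancel. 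The same bookkeeping (now using $\div(f_P)=\overline\alpha\cdot D_P$ in Weil reciprocity) handles a change of $D_Q$, while a change of $f_P$ or $f_Q$ by a scalar is harmless since $D_Q,D_P$ have degree $0$. To see the value lies in $\GG_m^{\otimes_\ZZ R}[\overline\alpha]$, raise to the power $\overline\alpha$: the identity $f(\mu\cdot D)=f(D)^{\overline\mu}$ turns $f_P(D_Q)^{\overline\alpha}$ into $f_P(\alpha\cdot D_Q)=f_P(\div f_Q)$, and Weil reciprocity rewrites this as $\overline{f_Q(\overline\alpha\cdot D_P)}=\overline{f_Q(D_P)^{\alpha}}$, which (when $R$ is commutative) equals $\overline{f_Q(D_P)}^{\overline\alpha}$ and so cancels the contribution of the second factor, giving $W_\alpha(D_P,D_Q)^{\overline\alpha}=1$. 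Bilinearity is routine: $f_{P+P'}=f_Pf_{P'}$ up to a constant, and evaluation at a fixed degree-$0$ divisor is an $R$-module homomorphism.

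The remaining properties are short. Conjugate skew-Hermitianity is a formal manipulation of the definition: $W_{\overline\alpha}(D_Q,D_P)=f_Q(D_P)\,\overline{f_P(D_Q)}^{-1}$ uses the very same functions $f_P,f_Q$, so applying $x\mapsto\overline{x}^{\,-1}$ and using that conjugation is an involution recovers $f_P(D_Q)\,\overline{f_Q(D_P)}^{-1}=W_\alpha(D_P,D_Q)$. Compatibility mirrors Theorem~\ref{thm:tateprops}: $\phi_*f_P$ has divisor $\overline\alpha\cdot\phi_*D_P$, and $(\phi_*f_P)(\phi_*D_Q)=f_P(\phi^*\phi_*D_Q)=f_P(D_Q)^{\deg\phi}$ since $\deg\phi\in\ZZ$ is central and self-conjugate, so both factors of $W_\alpha$ acquire a $\deg\phi$ power. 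Coherence comes from $\overline{\beta\alpha}\cdot D_P=\overline\alpha\cdot(\overline\beta\cdot D_P)$ and $\beta\alpha\cdot D_Q=\beta\cdot(\alpha\cdot D_Q)$, which let one reuse $f_P$ for the first argument and take the auxiliary function for $D_Q$ in $W_{\beta\alpha}$ to be the $\beta$-th power of the one in $W_\alpha$; Galois invariance is immediate because $\sigma$ commutes with $\div$, with the $R$-action, with evaluation, and with conjugation, so $f_P^\sigma$ serves as the function for $D_P^\sigma$. For restricted sesquilinearity, the hypotheses $\gamma^{(\alpha)}=\gamma$ and $\delta^{(\overline\alpha)}=\delta$ say precisely that $\gamma$ (resp.\ $\delta$) commutes with $\alpha$, hence — since $\overline\alpha\in\ZZ[\alpha]$ — also with $\overline\alpha$, so $\overline\alpha\cdot(\gamma D_P)=\gamma\cdot(\overline\alpha D_P)$ and we may take $f_{\gamma P}=f_P^{\gamma}$, $f_{\delta Q}=f_Q^{\delta}$; then $f_{\gamma P}(\delta\cdot D_Q)=f_P(D_Q)^{\overline\delta\gamma}$, the second factor contributes the same exponent, and one obtains $W_\alpha(D_P,D_Q)^{\overline\delta\gamma}$.

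I expect the main obstacle to be the non-commutative bookkeeping rather than any conceptual point. Because conjugation on $\GG_m^{\otimes_\ZZ R}$ does not intertwine cleanly with the left action, the clause ``the value lies in $\GG_m^{\otimes_\ZZ R}[\overline\alpha]$'' and the restricted sesquilinearity statement have to be formulated and verified while keeping strict track of which side each scalar multiplies on, of the subring $R_{\overline\alpha}$, and of the twisting elements $\delta^{(\gamma)}$ — exactly the subtlety already flagged in the footnotes to the proof of Theorem~\ref{thm:tateprops}. In the rank-two (commutative) case all of this collapses and the proof is a transcription of the classical one; the rank-four case is where discipline about left versus right is essential, and where I would spend most of the write-up.
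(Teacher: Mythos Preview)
Your proposal is correct and follows essentially the same route as the paper's proof: well-definedness via $R$-Weil reciprocity, then the remaining items by transplanting the arguments from Theorem~\ref{thm:tateprops} or reading them directly off the definition. The one place where the paper is slicker is the claim that the value lands in $\GG_m^{\otimes_\ZZ R}[\overline\alpha]$: rather than raise to the power $\overline\alpha$ and invoke Weil reciprocity (which, as you note, needs extra bookkeeping when $R$ is non-commutative), the paper simply observes $W_\alpha(D_P,0)=W_\alpha(0,D_Q)=1$ from the definition and lets (sesqui)linearity in the second variable with $\delta=\alpha$ finish the job uniformly in rank.
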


\begin{proof}
We begin with well-definition.  
Suppose
$D_Q \sim D_Q'$ and $D_P \sim D_P'$, 
and
let $\div(g_1) = D_Q - D_Q'$ and $\div(g_2) = D_P - D_P'$.
From Weil reciprocity, 
\[
	\frac{\overline{f_Q(D_P)}}{\overline{f_Q'(D_P)}} 
=	\overline{\left(\frac{f_Q}{f_Q'}\right)(D_P)}
= \overline{g_1(D_P)^{\alpha}} = \overline{g_1( \overline\alpha \cdot D_P)} = 
\frac{f_P(D_Q)}{f_P(D_Q')}.
\]
Therefore, $W_\alpha(D_P,D_Q) = W_\alpha(D_P,D_Q')$.  By a symmetrical argument, $W_\alpha(D_P,D_Q) = W_\alpha(D_P',D_Q')$.  Note that a scalar change of $f_P$ or $f_Q$ will cancel.
Thus $W_\alpha$ is well-defined taking values in $\GG_m^{\otimes_\ZZ R}$.  
The proof of bilinearity is as for $T_\alpha$ in Theorem~\ref{thm:tateprops}.
From the definition, observe that
$W_\alpha(D_P,0) = W_\alpha(0,D_Q) = 1$.  In particular, bilinearity implies the image is in $\GG_m^{\otimes_\ZZ R}[\overline\alpha]$.

The argument for sesquilinearity of $T_\alpha$ in the proof of Theorem~\ref{thm:tateprops} works equally well here, as does the argument for compatibility.
Conjugate skew-Hermitianity is exactly from the definition of $W_\alpha$.
For coherence, recall that $\overline{\alpha \beta} = \overline{\beta} \overline{\alpha}$ and apply the definitions.
Galois invariance follows as in Theorem~\ref{thm:tateprops}. 
\end{proof}

Analogously to Proposition~\ref{prop:tate-n}, for $W_n$, we can give an expression in terms of the classical Weil pairing.

\begin{proposition}
\label{prop:weil-n}
The following hold.
\begin{enumerate}
	\item Let $n \in \ZZ$.  Let
\[
e_n: E[n] \times E[n] \rightarrow \mu_n
\]
denote the usual Weil pairing as in Section~\ref{sec:backweil}.
    Let 
    $D_P, D_Q  \in \Pic_R^0(E)[n]$.
    Suppose
    \[
D_P \sim 
\sum \tau_i \cdot \left( (P_i) - (\mathcal{O})\right), \quad
D_Q \sim 
\sum \tau_i \cdot \left( (Q_i) - (\mathcal{O}) \right).
\]
    Then 
    \[
    W_n (D_P,D_Q) = 
    \prod_{i,j=0}^{r-1} e_n(P_i, Q_j) ^{\overline{\tau_j} \tau_i}.
    \]
    \item
    Finally, when both of the following quantities are defined, and when $R$ is an imaginary quadratic order, with $\alpha \in R$, then
    \[
       {W}_{N(\alpha)}(D_P,D_Q) = {W}_\alpha(D_P,D_Q)^\alpha.
    \]
    \end{enumerate}
\end{proposition}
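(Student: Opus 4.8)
The plan is to mirror the proof of Proposition~\ref{prop:tate-n}, since $W_n$ (resp.\ $W_\alpha$) is assembled from the same functions $f_P$ as $T_n$ (resp.\ $T_\alpha$) together with the extra conjugated reciprocal factor $\overline{f_Q(D_P)}^{-1}$.

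For part~(1), I would first observe that each $P_i$ lies in $E[n]$ (so that $e_n(P_i,Q_j)$ is meaningful): the $R$-divisor $nD_P=\sum_i\tau_i\big(n(P_i)-n(\mathcal{O})\big)$ is principal, and since the $\tau_i$-decompositions of an $R$-divisor and of an element of $(K(E)^*)^{\otimes_\ZZ R}$ are both unique, each $n(P_i)-n(\mathcal{O})$ is already principal, i.e.\ $[n]P_i=\mathcal{O}$. Next, using a linear equivalence I would replace $D_P,D_Q$ by $\sum_i\tau_i\big((P_i+T)-(T)\big)$ and $\sum_j\tau_j\big((Q_j)-(\mathcal{O})\big)$ for a generic auxiliary point $T$, chosen so that the pairs $(f_P,D_Q)$ and $(f_Q,D_P)$ have disjoint support and $T\notin\{\mathcal{O},-P_i,Q_j,Q_j-P_i\}$ for all $i,j$. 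Since $n\in\ZZ$ we may take $f_P=\prod_i g_i^{\tau_i}$ with $\div(g_i)=n\big((P_i+T)-(T)\big)$ and $f_Q=\prod_j f_{n,Q_j}^{\tau_j}$, where $f_{n,X}$ has divisor $n(X)-n(\mathcal{O})$; note that $g_i$ equals $f_{n,P_i}\circ t_{-T}$ up to a scalar, which cancels upon evaluation at degree-zero divisors. Expanding $W_n(D_P,D_Q)=f_P(D_Q)\,\overline{f_Q(D_P)}^{-1}$ using the evaluation rules $f(\beta\cdot D)=f(D)^{\overline\beta}$ and $(f^\beta)(D)=f(D)^\beta$, the module identity $(x^\beta)^\gamma=x^{\gamma\beta}$, and the conjugation rule $\overline{x^\beta}=x^{\overline\beta}$ on simple tensors — in particular $\overline{\overline{\tau_i}\tau_j}=\overline{\tau_j}\tau_i$ to put the conjugated factor in the right shape — collects everything into
\[
W_n(D_P,D_Q)=\prod_{i,j}\left(\frac{f_{n,P_i}(Q_j-T)\,f_{n,Q_j}(T)}{f_{n,P_i}(-T)\,f_{n,Q_j}(P_i+T)}\right)^{\overline{\tau_j}\tau_i},
\]
and the parenthesized quantity is precisely the classical expression for $e_n(P_i,Q_j)$ recorded after Definition~\ref{defn: weil1}; this gives part~(1) (for $R$ of either rank, since $\overline{\overline{\tau_i}\tau_j}=\overline{\tau_j}\tau_i$ holds in both cases).

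For part~(2), let $R$ be an imaginary quadratic order, so conjugation is an involution of the commutative ring $R$ and $N(\alpha)=\alpha\overline\alpha=\overline\alpha\alpha$. If $\div(f_P)=\overline\alpha\cdot D_P$ and $\div(f_Q)=\alpha\cdot D_Q$ are the functions computing $W_\alpha(D_P,D_Q)$, then \eqref{eqn:conjf} gives $\div(f_P^{\alpha})=N(\alpha)\,D_P$ and $\div(f_Q^{\overline\alpha})=N(\alpha)\,D_Q$, so $f_P^{\alpha}$ and $f_Q^{\overline\alpha}$ are admissible choices for the functions computing $W_{N(\alpha)}(D_P,D_Q)$ (the disjoint-support requirements are unchanged, $R$ being a domain). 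Hence, exactly as in the proof of Proposition~\ref{prop:tate-n},
\[
W_{N(\alpha)}(D_P,D_Q)=(f_P^{\alpha})(D_Q)\,\overline{(f_Q^{\overline\alpha})(D_P)}^{-1}=f_P(D_Q)^{\alpha}\,\overline{f_Q(D_P)^{\overline\alpha}}^{-1},
\]
and since $R$ is commutative, $\overline{f_Q(D_P)^{\overline\alpha}}=\overline{f_Q(D_P)}^{\,\overline{\overline\alpha}}=\overline{f_Q(D_P)}^{\,\alpha}$, so the right-hand side equals $\big(f_P(D_Q)\,\overline{f_Q(D_P)}^{-1}\big)^{\alpha}=W_\alpha(D_P,D_Q)^{\alpha}$.

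Neither part poses a real obstacle; the work is formal and the only care needed is bookkeeping with the vinculum. The critical identities are $\overline{\overline{\tau_i}\tau_j}=\overline{\tau_j}\tau_i$, which produces the exponent $\overline{\tau_j}\tau_i$ in part~(1), and $\overline{g^{\beta}}=\overline g^{\,\overline\beta}$, which holds only when $R$ is commutative (cf.\ the footnote to Theorem~\ref{thm:tateprops}) and is exactly why part~(2) is restricted to the imaginary quadratic case. The remaining point is the genericity argument — one auxiliary point $T$ simultaneously meeting all the finitely many disjointness conditions across the double product — which is routine, the forbidden locus for $T$ being finite.
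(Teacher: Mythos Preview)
Your proposal is correct and follows essentially the same approach as the paper's proof. The only cosmetic difference in part~(1) is that you translate $D_P$ by the auxiliary point while the paper translates $D_Q$; both choices unwind to the same classical Weil-pairing expression inside the double product, and your explicit check that $P_i\in E[n]$ is a nice addition the paper leaves implicit. Part~(2) matches the paper's argument verbatim, including the reason the commutativity hypothesis is needed.
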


\begin{proof}
By a linear equivalence, assume that
\[
D_P = \sum_i \tau_i \cdot \left( (P_i) - (\mathcal{O})\right), \quad
D_Q = \sum_j \tau_j \cdot \left( (Q_j+S) - (S) \right).
\]
where $S$ is chosen to avoid intersections of supports.
We have from \eqref{eqn:hpgp}, we have $f_P = \prod_i f_{i,P}^{ \tau_i}$, $f_Q = \prod_i f_{j,Q}^{\tau j}$ where
\[
	\div(f_{i,P}) = n(P_i) - n(\mathcal{O}), \quad
	\div(f_{j,Q}) = n(Q_j+S) - n(S).
    \]
    We obtain\footnote{In counterpoint to the footnote in the proof of Theorem~\ref{thm:tateprops}, we do have $\overline{g^\alpha} = g^{\overline{\alpha}}$ when $g \in \GG_m^{\otimes 1}$.}
\begin{align*}
	W_n(D_P,D_Q) &= f_P\left( \sum_j \tau_j ( (Q_j+S) - (S) ) \right) 
	\overline{ f_Q\left( \sum_i \tau_i ( (P_i) - (\mathcal{O}) ) \right) }^{-1} \\
		     &= \prod_j f_P( (Q_j+S) - (S) )^{\overline{\tau_j}} 
		     \overline{ \prod_i f_Q( (P_i) - (\mathcal{O}) )^{\overline{\tau_i}} }^{-1} \\
			 &=
	    \prod_{j} \left(\prod_i f_{i,P}( (Q_j + S) - (S) )^{ \tau_i} \right)^{\overline{\tau_j}}
	   \overline{ \prod_{i} \left(\prod_j f_{j,Q}( (P_i) - (\mathcal{O}) )^{ \tau_j} \right)^{\overline{\tau_i}} 
	   }^{-1} \\ &=
	   \prod_{j} \prod_i f_{i,P}( (Q_j + S) - (S) )^{\overline{\tau_j} \tau_i}
	   \overline{
	   f_{j,Q}( (P_i) - (\mathcal{O}) )^{\overline{\tau_i} \tau_j} 
	   }^{-1} \\
	   &=
	   \prod_{j} \prod_i f_{i,P}( (Q_j + S) - (S) )^{\overline{\tau_j} \tau_i}
	   \left(f_{j,Q}( (P_i) - (\mathcal{O}) )^{\overline{\tau_j} {\tau_i}}\right)^{-1}
   \end{align*}
    That shows the first statement.  For the second, 
    suppose $\div(f_P) = \overline\alpha \cdot D_P$ and $\div(f_Q) = \alpha \cdot D_Q$.  
    Observe that for any divisor $D_Q$ with sufficiently disjoint support,
\[
	\frac{(f_{P}^{{\alpha}})(D_Q)}{\overline{(f_Q^{\overline{\alpha}})(D_P)}}
	= 
	\left(\frac{f_{P}(D_Q)}{\overline{f_Q(D_P)}}\right)^{{\alpha}}.
\]
On the left, this is a representative of $W_n(D_P,D_Q)$ in $\GG_m^{\otimes_\ZZ R}[n]$, since $\div(f_P^\alpha) = \alpha \cdot \div(f_P) = n D_P$ and $\div(f_Q^{\overline{\alpha}}) = \overline\alpha \cdot \div(f_Q) = n D_Q$.  However, looking at the right, this is also a representative of $W_\alpha(D_P,D_Q)^\alpha$ in $\GG_m^{\otimes_\ZZ R}[\overline\alpha]$.
\end{proof}

\begin{remark}
	Because of the footnote in the proof of Theorem~\ref{thm:tateprops}, the last displayed equation of the proof above does not necessarily hold when $R$ is a quaternion algebra.  Furthermore, if one is interested in the second statement of the theorem, in the case of $R$ a quaternion algebra, one could use the definition in Theorem~\ref{thm:alternate-weil} as the primary definition of the Weil pairing, but then one may wish to reprove Theorem~\ref{thm:weilprops}; we have not attempted this.
\end{remark}

When $E$ has CM by $\alpha \in R$, and $R$ is an imaginary quadratic order, then there is an alternate definition along the lines of the second definition in Section~\ref{sec:backweil}.  
	Observe that for any field $K$ containing the $n$-th roots of unity, where $n = N(\alpha)$, we have $(K^*)^{\otimes_\ZZ R}[\overline \alpha] \cong (R/nR)[\overline \alpha] \cong R/R\overline \alpha R$.  Observe that this abstract group isomorphism actually constitutes a type of discrete logarithm:  that is, choosing a generator $x$ of the $n$-th roots of unity in $K$, $x^\beta \mapsto \beta$.

\begin{theorem}
\label{thm:weilnon}
	Let $\alpha \in R$ have norm $n = N(\alpha)$.
	Let $\overline{K}$ be an algebraically closed field with characteristic coprime to $n$.  Suppose $n$ is also coprime to the discriminant of $R$.
    The pairing
    \[
	    {W}_\alpha : \Pic_R^0(E)[\overline{\alpha}](\overline{K}) \times \Pic_R^0(E)[\alpha](\overline{K}) \rightarrow (R/nR) [\overline\alpha]
\]
is non-degenerate.
\end{theorem}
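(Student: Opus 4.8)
The plan is to reduce everything to the classical Weil pairing $e_n$, where $n = N(\alpha)$, over $\overline K$: first settle the case $\alpha = n \in \ZZ$ using the integral-basis formula of Proposition~\ref{prop:weil-n}, and then bootstrap to general $\alpha$ via the coherence of the $W_\bullet$ family. A preliminary remark fixes the target. Since $\operatorname{char}(\overline K)$ is coprime to $n$, we have $\mu_n(\overline K) \cong \ZZ/n\ZZ$, whence $\GG_m^{\otimes_\ZZ R}[n] \cong R/nR$ and $\GG_m^{\otimes_\ZZ R}[\overline\alpha] \cong (R/nR)[\overline\alpha]$; moreover, writing an element of $\GG_m^{\otimes_\ZZ R}$ in the preferred integral basis as $\prod_i g_i^{\tau_i}$, such an element is trivial precisely when every $g_i$ is trivial. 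The same criterion applies for the basis $\{\overline{\tau_i}\}$, since conjugation is a $\ZZ$-linear involution of $R$, hence a $\ZZ$-module automorphism carrying $\{\tau_i\}$ onto $\{\overline{\tau_i}\}$.

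For $\alpha = n$: since $R$ is $\ZZ$-free, $\Pic_R^0(E)[n] \cong R \otimes_\ZZ E[n]$, so a nonzero $D_P \in \Pic_R^0(E)[n](\overline K)$ is equivalent to $\sum_i \tau_i\bigl((P_i) - (\mathcal{O})\bigr)$ with all $P_i \in E[n](\overline K)$ and some $P_j \neq \mathcal{O}$. Taking $D_Q = (Q) - (\mathcal{O})$ in the $\tau_0$-slot, Proposition~\ref{prop:weil-n} gives $W_n(D_P, D_Q) = \prod_i e_n(P_i, Q)^{\tau_i}$, whose $\tau_j$-component is $e_n(P_j, Q)$; the classical non-degeneracy of $e_n$ over $\overline K$ (Proposition~\ref{prop: weilprop}) supplies $Q$ with $e_n(P_j, Q) \neq 1$, so $W_n(D_P, D_Q) \neq 1$. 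A symmetric computation, now producing $\prod_j e_n(P, Q_j)^{\overline{\tau_j}}$ and using that $\{\overline{\tau_j}\}$ is a basis, gives right non-degeneracy of $W_n$.

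To pass to general $\alpha$, I would use coherence (Theorem~\ref{thm:weilprops}) together with the identity $\alpha\overline\alpha = \overline\alpha\alpha = n$ in $R$ and the inclusions $\Pic_R^0(E)[\overline\alpha], \Pic_R^0(E)[\alpha] \subseteq \Pic_R^0(E)[n]$. Given nonzero $D_P \in \Pic_R^0(E)[\overline\alpha](\overline K)$, the previous step yields $D_{Q'} \in \Pic_R^0(E)[n](\overline K)$ with $W_n(D_P, D_{Q'}) \neq 1$; writing $n = \alpha\overline\alpha$ and applying the second coherence identity gives $W_n(D_P, D_{Q'}) = W_\alpha(D_P, \overline\alpha \cdot D_{Q'})$, and $\overline\alpha \cdot D_{Q'} \in \Pic_R^0(E)[\alpha]$ because $\alpha(\overline\alpha D_{Q'}) = n D_{Q'} = 0$, so $W_\alpha(D_P, \overline\alpha \cdot D_{Q'}) \neq 1$: this is left non-degeneracy. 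Dually, given nonzero $D_Q \in \Pic_R^0(E)[\alpha](\overline K)$, pick $D_{P'} \in \Pic_R^0(E)[n](\overline K)$ with $W_n(D_{P'}, D_Q) \neq 1$; writing $n = \overline\alpha\alpha$ and using the first coherence identity gives $W_n(D_{P'}, D_Q) = W_\alpha(\alpha \cdot D_{P'}, D_Q)$ with $\alpha \cdot D_{P'} \in \Pic_R^0(E)[\overline\alpha]$, so $W_\alpha(\alpha \cdot D_{P'}, D_Q) \neq 1$.

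I expect the main obstacle to be not a single deep step but the structural fact that over $\overline K$ the Tate pairing $T_\alpha$ degenerates completely — every element of $\overline K^*$ is an $n$-th power — so Theorem~\ref{thm:tatenon} is unavailable here and one genuinely has to return to $e_n$ and carry the $R$-module structure through by hand. The delicate part is then the bookkeeping in the bootstrap: checking that each coherence identity is applied with exactly the torsion hypotheses it requires on both arguments, and that the witnessing divisor can be realized as an $\overline\alpha$- (respectively $\alpha$-) multiple of an $n$-torsion class landing in the correct torsion subgroup. The coprimality of $n$ with the discriminant of $R$ plays no essential role in the non-degeneracy argument as such; it is inherited from the standing setup and guarantees the clean identification of the codomain with $(R/nR)[\overline\alpha]$.
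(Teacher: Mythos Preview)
Your proposal is correct and follows essentially the same route as the paper: first establish non-degeneracy of $W_n$ from Proposition~\ref{prop:weil-n} and the classical non-degeneracy of $e_n$, then use coherence (Theorem~\ref{thm:weilprops}) with the factorization $n = \alpha\overline\alpha$ to pull back to $W_\alpha$. Your version is simply more explicit---you spell out both left and right non-degeneracy in each stage and record the basis-component argument, whereas the paper states the $W_n$ case by reference to the analogous step in Theorem~\ref{thm:tatenon} and writes out only left non-degeneracy for general $\alpha$.
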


\begin{proof}
	As in the proof of Theorem~\ref{thm:tatenon}, for $W_n$ it suffices to use Proposition~\ref{prop:weil-n} and the non-degeneracy of $e_n$ (Proposition~\ref{prop: weilprop}).
Now consider the general case.  Fix $D_P \in \Pic_R^0(E)[\overline{\alpha}](\overline{K})$.   Suppose $W_\alpha(D_P,D_Q) = 1$ for all $D_Q \in \Pic_R^0(E)[\alpha](\overline{K})$.  Then for all $D_Q \in \Pic_R^0(E)[N(\alpha)](\overline{K})$, we have $\overline{\alpha} \cdot D_Q \in \Pic_R^0(E)[\alpha](\overline{K})$, and therefore $W_{N(\alpha)}(D_P,D_Q) = W_{\alpha}(D_P, \overline{\alpha} \cdot D_Q) = 1$.  So we have $D_P \sim 0$ by the first case.
\end{proof}

\section{Curves with complex multiplication}
\label{sec:CM}

Thus far the pairings we have constructed are somewhat abstract, being defined even for elliptic curves having no complex multiplication.  In this section, we pull back these pairings to curves with complex multiplication by subrings of $R$, and see that the resulting pairings are sesquilinear with respect to the endomorphisms.

To be precise, the pairings $T_\alpha$ and $W_\alpha$ are defined on subgroups or quotients of $\Pic_R^0(E)$.  If we have an $R$-module homomorphism into $\Pic_R^0(E)$, then we can pull back the pairing along this $R$-module homomorphism.  In what follows, we define an $R$-module homomorphism $\eta : E \rightarrow \Pic_R^0(E)$ to pull back along, where $E$ is an elliptic curve with complex multiplication, considered as an $R$-module with respect to this CM.

The rest of the section is devoted to the basic properties of these new pairings $\widehat{T}_\alpha$ and $\widehat{W}_\alpha$, analogously to what has been proven for $T_\alpha$ and $W_\alpha$.  We almost immediately restrict to the case of quadratic $R$ for reasons of sanity.  We forewarn the reader that $\eta$ twists the action of $R$, that is, $\eta([\alpha]P) = \overline{\alpha}\eta(P)$, and that this results in all the vincula hopping about like so many excited circus fleas.  In particular, where in the last section our pairings were conjugate linear on the right, in this section they become conjugate linear on the left.

\subsection{Pull-back to CM curves}

Suppose $S \subseteq R$ is a subring, and suppose that $E$ has CM by $S$.  Fix a map $[\cdot]:S \rightarrow \End(E)$, $\gamma \mapsto [\gamma]$.

Then for $\gamma \in S$, $[\gamma]_*$ acts on $\Pic^0(E)$.  Then there is a surjective $R$-module homomorphism
\[
\epsilon: \Pic_R^0(E) \cong R \otimes_{\ZZ} \Pic^0(E) \rightarrow R \otimes_S \Pic^0(E).
\]
which in particular takes
\[
	\gamma \otimes_\ZZ D \rightarrow \gamma \otimes_S D \sim [\gamma]_* D
\]
for all $\gamma \in S$.  This gives rise to an exact sequence of $R$-modules defining $\Pic_{R,S}^0(E)$ as follows:
\begin{equation}
\label{eqn:PicAsExtRS}
\xymatrix{
	0 \ar[r] &\Pic_{R,S}^0(E) \ar[r]^-{\eta} &\Pic^0_{R}(E) \ar[r]^-\epsilon &R \otimes_S\Pic^0(E) \ar[r] &0
}
\end{equation}

With \eqref{eqn:PicAsExtRS}, we can pull back pairings to $\Pic_{R,S}^0(E)$.  When $R=S$, we can identify $\Pic_{R,S}^0(E)$ with $E^{r-1}$ via
\[
	E^{r-1} \rightarrow \Pic_{R,S}^0(E),
\quad
(P_1,\ldots,P_{r-1}) \mapsto
\left(\sum [-\tau_i]P_i\right) - (\mathcal{O}) + \sum \tau_i \left( (P_i) - (\mathcal{O} )\right).
\]
(This is not canonical; there's a choice of automorphism of $E^{r-1}$.)  
Thus we obtain pairings on $E^{r-1}$.  We will focus on the imaginary quadratic case, where we will make this explicit.

\subsection{Quadratic case}

Suppose $E$ defined over $K$ has CM by $R$, an order in an imaginary quadratic field.  To fix a map $R \rightarrow \End(E)$, denoted $\gamma \rightarrow [\gamma]$, we first fix an injection $\iota: R \rightarrow \overline K$, and then we can take that which is normalized as in \cite[II.1.1]{Sil2}, i.e. $[\gamma]^* \omega = \iota(\gamma) \omega$ for the invariant differential $\omega$ of $E$ and $\gamma \in R$.
The situation of the last subsection becomes
\begin{equation}
\label{eqn:PicAsExt}
\xymatrix{
0 \ar[r] &E \ar[r]^-{\eta} &\Pic^0_{R}(E) \ar[r]^-\epsilon &E \ar[r] &0
}
\end{equation}
given by $R$-module homomorphism
\[
\epsilon: \Pic^0_R(E) \rightarrow E, \quad
\sum \alpha_i (P_i) \mapsto \sum [\alpha_i] P_i.
\]
The kernel is an $R$-module, identified with $E$ via
\begin{equation}
  \label{eqn:eta}
\eta: E \rightarrow \Pic_R^0(E),
\quad
P \mapsto
([-\tau]P) - (\mathcal{O}) + \tau ( (P) - (\mathcal{O} )).
\end{equation}
but note that the $R$-module action on this $E$ is twisted:
\begin{equation}
  \label{eqn:flea}
    \eta([\alpha]P) = \overline{\alpha} \cdot \eta(P),
\end{equation}
because
if $\alpha = a + c\tau$ and $\alpha \tau = b + d\tau$, then $\overline\alpha = d - c\tau$ and $\overline\alpha \tau = -b + a\tau$, so
\[
\eta([\alpha]P) = ([-\tau \alpha]P) - (\mathcal{O}) + \tau ( ([\alpha]P) - (\mathcal{O}))
\sim (
d([-\tau]P) - b(P) + \tau ( -c([-\tau]P) + a(P) )
)
=
\overline{\alpha} \cdot \eta(P).
\]
Observe that $\eta$ is not actually dependent on the choice of $\tau$; a map fitting the exact sequence is unique up to automorphism of $E$. 
Notice $\eta$ respects the action of any isogeny $\phi: E \rightarrow E'$ which itself respects CM by $R$, i.e., if $\phi \circ [\tau] = [\tau] \circ \phi$, then
\[
\eta(\phi P) = \phi_* \eta(P).
\]

Finally, we discuss the Galois action.  Let $\sigma \in \operatorname{Gal}(\overline K / K)$.  Recall that 
the exact sequence \eqref{eqn:PicAsExt} depends upon the normalized choice of map $R \rightarrow \End(E)$ and the injection $\iota$.  Write $\eta_E$ and $\eta_{E^\sigma}$ to distinguish.  When we conjugate $E$ to $E^\sigma$, making these normalized choices, there is an isomorphism $\End(E) \cong \End(E^\sigma)$ given by $([\alpha]_{E})^\sigma = [\alpha^\sigma]_{E^\sigma}$ (this follows as in \cite[II.2.2(a)]{Sil2}).  
Then the following commutes:
\begin{equation}
\label{eqn:PicAsExt2}
\xymatrix{
	0 \ar[r] &E \ar[r]^-{\eta_E} \ar[d]^\sigma &\Pic^0_{R}(E) \ar[r] \ar[d]^\sigma &E \ar[r] \ar[d]^\sigma &0 \\
	0 \ar[r] &E^\sigma \ar[r]^-{\eta_{E^\sigma}} &\Pic^0_{R^\sigma}(E^\sigma) \ar[r] &E^\sigma \ar[r] &0
}
\end{equation}
where the notation $R^\sigma$ indicates that we use the injection $\iota \circ \sigma : R \rightarrow K$ in defining $\eta_{E^\sigma}$, i.e. we initially replace $R$ with $R^\sigma$ so that
\[
	\eta_{E^\sigma}: E^\sigma \rightarrow \Pic_{R^\sigma}^0(E),
\quad
P \mapsto
([-\tau^\sigma]P) - (\mathcal{O}) + \tau^\sigma ( (P) - (\mathcal{O} )).
\]

This preserves the Galois action on $\Pic_R^0$ as given before:
      \[
	       (\gamma \cdot \eta_E(P) )^\sigma 
	      = \eta_E([\overline\gamma]_E P)^\sigma 
	      = \eta_{E^\sigma}( [\overline\gamma^\sigma]_{E^\sigma} P^\sigma )
	      = {\gamma} \cdot \eta_{E^\sigma}(P^\sigma)
	      = \gamma \cdot (\eta_E(P))^\sigma.
      \]

\subsection{Pairings for quadratic $R$}

Define  
\[
\widehat{W}_\alpha : E[\overline{\alpha}] \times E[\alpha] \rightarrow \GG_m^{\otimes_\ZZ R} [\alpha],
\quad
\widehat{W}_\alpha(P,Q) := W_{\overline\alpha}( 
{\eta}(P),
\eta(Q)
),
\]
where $\eta$ is as in the previous section.  Observe that $\eta$'s twisting of the $R$-action, \eqref{eqn:flea}, results in many swaps of vincula, when comparing to the domain and codomain of $W_\alpha$.

\begin{theorem}
\label{thm:weilprops-red}
    The pairing defined above is well-defined, bilinear, and satisfies
    \begin{enumerate}
	\item Restricted Sesquilinearity:  For $\gamma, \delta$ such that $\gamma^{(\alpha)} = \gamma$ and $\delta^{(\overline\alpha)} = \delta$, we have 
\[
	\widehat{W}_\alpha([\gamma] P, [\delta] Q)
    = \widehat{W}_\alpha(P,Q)^{\delta\overline\gamma}.
    \]
    \item Conjugate skew-Hermitianity:
    \[
	    \widehat{W}_\alpha(P,Q) = \overline{\widehat{W}_{\overline{\alpha}}(Q,P)}^{-1}.
\]
        \item Compatibility:   
		Let $\phi: E \rightarrow E'$ be an isogeny between curves with CM by $R$ and satisfy $[\alpha] \circ \phi = \phi \circ [\alpha]$.   Then for $P \in E[\overline\alpha]$ and $Q \in E[\alpha]$,
    \[
    \widehat{W}_\alpha(\phi P,\phi Q) = \widehat{W}_\alpha(P,Q)^{\deg \phi}.
    \]
\item Coherence: For $P \in E[\overline{\alpha\beta}]$, $Q \in E[{\alpha\beta}]$,
    \[
	    \widehat{W}_{\alpha\beta}(P,Q) = \widehat{W}_{\alpha}( [\overline{\beta}] P, Q) \in \GG_m^{\otimes_\ZZ R}[\alpha], \quad
	    \widehat{W}_{\alpha\beta}(P,Q) = \widehat{W}_\beta( P, [{\alpha}] Q) \in \GG_m^{\otimes_\ZZ R}[\beta].
    \]
    \item Galois invariance: Suppose $E$ is defined over a field $K$, and suppose there is an injection $\iota: R \rightarrow \overline K$; indicate this in the notation for the pairing as discussed above.
 For $\sigma \in \operatorname{Gal}(\overline K/K)$, 
\[
	\widehat{W}^\iota_\alpha(P,Q)^\sigma = \widehat{W}^{\iota \circ \sigma}_{\alpha}(P^\sigma, Q^\sigma).
\]
    \end{enumerate}

\end{theorem}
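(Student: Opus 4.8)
The plan is to deduce every assertion by transporting the corresponding property of $W_{\overline\alpha}$ from Theorem~\ref{thm:weilprops} along the $R$-module homomorphism $\eta$ of \eqref{eqn:eta}, keeping careful track of the twist \eqref{eqn:flea}. First I would settle the bookkeeping on domain and codomain: for $P\in E[\overline\alpha]$ one has $\alpha\cdot\eta(P)=\eta([\overline\alpha]P)=0$ by \eqref{eqn:flea}, so $\eta(P)\in\Pic^0_R(E)[\alpha]$, and dually $\eta(Q)\in\Pic^0_R(E)[\overline\alpha]$ for $Q\in E[\alpha]$; since $W_{\overline\alpha}$ takes values in $\GG_m^{\otimes_\ZZ R}[\alpha]$, the stated codomain is correct. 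Well-definedness and bilinearity are then inherited directly from the corresponding parts of Theorem~\ref{thm:weilprops}, because $\eta$ is a homomorphism of $R$-modules.

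For restricted sesquilinearity I would compute, using \eqref{eqn:flea},
\[
\widehat{W}_\alpha([\gamma]P,[\delta]Q)=W_{\overline\alpha}\bigl(\eta([\gamma]P),\eta([\delta]Q)\bigr)=W_{\overline\alpha}\bigl(\overline\gamma\cdot\eta(P),\overline\delta\cdot\eta(Q)\bigr),
\]
and then invoke restricted sesquilinearity of $W_{\overline\alpha}$ (Theorem~\ref{thm:weilprops}(1)), which returns $W_{\overline\alpha}(\eta(P),\eta(Q))^{\overline{\overline\delta}\,\overline\gamma}=\widehat{W}_\alpha(P,Q)^{\delta\overline\gamma}$; the single extra vinculum that $\eta$ absorbs into each argument is exactly what shifts the conjugate-linearity from the right entry to the left. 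In the quadratic (hence commutative) setting the hypotheses $\gamma^{(\alpha)}=\gamma$ and $\delta^{(\overline\alpha)}=\delta$ are automatic, and I would note this. Conjugate skew-Hermitianity is then immediate from Theorem~\ref{thm:weilprops}(2) applied to $W_{\overline\alpha}$ together with the definition of $\widehat{W}_{\overline\alpha}$. Compatibility follows by rewriting $\widehat{W}_\alpha(\phi P,\phi Q)=W_{\overline\alpha}(\phi_*\eta(P),\phi_*\eta(Q))$ using the identity $\eta(\phi P)=\phi_*\eta(P)$ (valid since $\phi$ commutes with the CM action, as recorded after \eqref{eqn:flea}) and then applying Theorem~\ref{thm:weilprops}(3). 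For coherence I would write $\overline{\alpha\beta}=\overline\beta\,\overline\alpha$, apply Theorem~\ref{thm:weilprops}(4) to $W_{\overline\beta\,\overline\alpha}$, and translate back: the first identity uses $\overline{\overline\beta}\cdot\eta(P)=\beta\cdot\eta(P)=\eta([\overline\beta]P)$ and the second uses $\overline\alpha\cdot\eta(Q)=\eta([\alpha]Q)$, both instances of \eqref{eqn:flea}; along the way one checks the torsion constraints (and notes, as for $W$, that the pairing is defined on the enlarged torsion subgroups that enter the statement).

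The genuinely delicate item is Galois invariance, since $\widehat{W}_\alpha$ depends on the chosen embedding $\iota:R\hookrightarrow\overline K$ both through $\eta$ and through the normalization of $[\alpha]$. Here I would combine three ingredients: Galois invariance of $W_{\overline\alpha}$ on $\Pic^0_R$ (Theorem~\ref{thm:weilprops}(5)); the commuting square \eqref{eqn:PicAsExt2}, which gives $\eta_E(P)^\sigma=\eta_{E^\sigma}(P^\sigma)$ once $\eta_{E^\sigma}$ is built from the conjugated CM data (the embedding written $\iota\circ\sigma$); and the fact that $([\alpha]_E)^\sigma$ is precisely the $\iota\circ\sigma$-normalized endomorphism of $E^\sigma$, so that $\widehat{W}^{\iota\circ\sigma}_\alpha(P^\sigma,Q^\sigma)=W_{\overline\alpha}\bigl(\eta_{E^\sigma}(P^\sigma),\eta_{E^\sigma}(Q^\sigma)\bigr)$ by definition. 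Chaining these,
\[
\widehat{W}^{\iota}_\alpha(P,Q)^\sigma=W_{\overline\alpha}\bigl(\eta_E(P),\eta_E(Q)\bigr)^\sigma=W_{\overline\alpha}\bigl(\eta_E(P)^\sigma,\eta_E(Q)^\sigma\bigr)=W_{\overline\alpha}\bigl(\eta_{E^\sigma}(P^\sigma),\eta_{E^\sigma}(Q^\sigma)\bigr)=\widehat{W}^{\iota\circ\sigma}_\alpha(P^\sigma,Q^\sigma).
\]
I expect the main obstacle to be exactly this last point — not the chaining, but verifying carefully that replacing $\iota$ by the conjugated embedding is precisely the adjustment that keeps $R$, its conjugation, the normalized action on $E^\sigma$, and the Galois action on $\Pic^0_R$ all mutually compatible. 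More broadly, the recurring difficulty in every part is combinatorial: under \eqref{eqn:flea} each vinculum migrates, and one must confirm that every $\overline{(\cdot)}$ lands where the statement of Theorem~\ref{thm:weilprops-red} demands.
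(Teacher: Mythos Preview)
Your proposal is correct and follows essentially the same approach as the paper: deduce each property from the corresponding item of Theorem~\ref{thm:weilprops} for $W_{\overline\alpha}$ by transporting along the twisted $R$-module homomorphism $\eta$, with Galois invariance handled via the commuting square \eqref{eqn:PicAsExt2}. The paper's proof is in fact just a two-sentence sketch of exactly this strategy, so your version is a faithful expansion of it.
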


In the language of isogeny-based cryptography, the condition on $\phi$ in the compatibility property above is that $\phi$ is \emph{oriented} by $\ZZ[\alpha]$.

\begin{proof}
We see immediately that this pairing is sesquilinear, skew-Hermitian, coherent and compatible, since $\eta$ is a twisted $R$-module homomorphism.  Recalling that $\eta([\alpha]P) = \overline{\alpha} \cdot \eta(P)$, we have to place the vincula carefully.
Galois invariance of $\widehat{W}_\alpha$ follows from Galois invariance of $W_\alpha$, with reference to the discussion at the end of the last section.
\end{proof}

With this language we can obtain an alternate definition of the Weil pairing, analogous to Definition~\ref{defn: weil2} in the classical case.  For the following statement, observe that although $\eta$ is only defined in \eqref{eqn:eta} as taking values in $\Pic_R^0(E)$, we can use the formula of \eqref{eqn:eta} to give a map $\eta$ with the same formula into $\Div_R^0(E)$.  We will use the same notation.  However, it only becomes $R$-linear when considered into $\Pic_R^0(E)$.  

\begin{theorem}
	\label{thm:alternate-weil}
	Suppose $E$ has CM by $R$, an imaginary quadratic order.  Let $\alpha \in R$.
  Let $P \in E[\overline{\alpha}]$ and $D_P \sim \eta(P) \in \Pic^0_R(E)[\alpha]$ such that $[\alpha]^* D_P$ has support disjoint from $\mathcal{O}$.
  Then there is a function $g_P$ with divisor
	$\div(g_P) = [\alpha]^* D_P$.
	Suppose $Q \in E[\alpha]$.  
Then
\[
	\widehat{W}_\alpha(P,Q) = \frac{ g_{P} (\eta(Q + X))}{g_{P}(\eta(X))},
\]
where $X$ is any element of $E$ such that $\eta(X)$ and $\eta(Q+X)$ have support disjoint from $g_P$. 
\end{theorem}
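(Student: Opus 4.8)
The plan is to unwind the definition $\widehat{W}_\alpha(P,Q) = W_{\overline\alpha}(\eta(P),\eta(Q))$ from the beginning of the subsection, and then recognize the formula $g_P(\eta(Q+X))/g_P(\eta(X))$ as exactly the value of $W_{\overline\alpha}$ computed against a cleverly chosen divisor representative of $\eta(Q)$. First I would set $D_P \sim \eta(P)$ as in the statement, so that $D_P \in \Pic_R^0(E)[\alpha]$ (recall $\eta$ twists, so $\overline{\overline\alpha} \cdot \eta(P) = \alpha \cdot \eta(P) = \eta([\overline\alpha]P)$, which is $0$ since $P \in E[\overline\alpha]$). Unravelling the definition of $W_{\overline\alpha}$ from Section~\ref{sec:genTate}, we need a function $f_P$ with $\div(f_P) = \overline{\overline\alpha}\cdot D_P = \alpha \cdot D_P$, evaluated at a divisor $D_Q \sim \eta(Q)$, and then divided by the conjugate-inverted companion term coming from $\eta(Q)$; but since $Q \in E[\alpha]$ gives $\eta(Q) \in \Pic_R^0(E)[\overline\alpha]$... wait — here the companion term $\overline{f_Q(D_P)}^{-1}$ has $\div(f_Q) = \overline\alpha \cdot D_Q$, and the whole point of Theorem~\ref{defn: weil2}-style reformulations is that this second term can be absorbed. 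The cleanest route: use that $\alpha \cdot D_P = [\alpha]^*[\alpha]_* D_P / (\text{something})$... more precisely, I would invoke the identity $[\alpha]^* \eta(P) \sim [\alpha]^*[\alpha]_* (\text{preimage divisor})$ together with the pushforward–pullback compatibility $[\alpha]_*[\alpha]^* = \deg[\alpha] = N(\alpha)$, to relate $g_P$ (with divisor $[\alpha]^* D_P$) to the standard Miller function $f_P$ (with divisor $\alpha \cdot D_P$).

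The key computational step is to show $g_P^{\text{(suitable power/twist)}} = f_P \circ [\alpha]$, analogous to the parenthetical remark $g_P^m = f_{m,P}\circ[m]$ in Definition~\ref{defn: weil2}. Concretely: since $[\alpha]$ has degree $N(\alpha)$ and $[\alpha]^*$ is $R$-linear on $\Div_R$, one checks $\div(f_P \circ [\alpha]) = [\alpha]^* \div(f_P) = [\alpha]^*(\alpha \cdot D_P) = \alpha \cdot [\alpha]^* D_P = \alpha \cdot \div(g_P)$, hence $f_P \circ [\alpha] = c\, g_P^{\alpha}$ for a constant $c$. Then I would compute
\[
W_{\overline\alpha}(\eta(P), \eta(Q)) = f_P(\eta(Q)) \cdot (\text{companion term}),
\]
choosing the representative $\eta(Q)$ in the form $[\alpha]_*\big((Q+X)-(X)\big)$ for a generic $X$ — this uses $\epsilon(\eta(Q)) = 0$ and that $\eta(Q)$, being in the kernel, is a pushforward under $[\alpha]$ of a degree-zero divisor; that pushed-forward divisor can be taken as $(Q+X)-(X)$ up to the $\tau$-twisted correction inherent in $\eta$. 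Evaluating $f_P$ at $[\alpha]_*((Q+X)-(X))$ equals $(f_P\circ[\alpha])$ evaluated at $(Q+X)-(X)$... no: $f_P([\alpha]_* D) = ([\alpha]^* f_P)(D)$ only when $f_P = [\alpha]^*(\text{something})$, so instead I use $f_P([\alpha]_* D') = $ the product over the fibre, which by $f_P\circ[\alpha] = c\,g_P^\alpha$ collapses to $g_P^\alpha(D')$ up to the constant (which cancels since $D'$ has degree $0$). Taking $D' = (Q+X) - (X)$ — or rather the $\eta$-twisted version $\eta(Q+X) - \eta(X)$ reinterpreted via the formula for $\eta$ into $\Div_R^0$ as the theorem statement instructs — yields $g_P(\eta(Q+X))/g_P(\eta(X))$, and the exponent $\alpha$ versus its absence is reconciled because $W_{\overline\alpha}$ takes values in $\GG_m^{\otimes_\ZZ R}[\alpha]$ and the companion term $\overline{f_Q(\eta(P))}^{-1}$ either vanishes or combines to cancel the extra $\alpha$ — this last bookkeeping is where I'd be most careful.

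The main obstacle I anticipate is precisely the vinculum/twisting bookkeeping flagged repeatedly in the surrounding text: $\eta$ satisfies $\eta([\alpha]P) = \overline\alpha \cdot \eta(P)$, so conjugations swap sides between the $W_{\overline\alpha}$ picture and the $\widehat{W}_\alpha$ picture, and the function $g_P$ with divisor $[\alpha]^* D_P$ does not obviously have disjoint support from the fibres of $[\alpha]$ over the support of $\eta(Q)$ — this is exactly why the auxiliary point $X$ is introduced, and I would need to argue (as in Definition~\ref{defn: weil2}) that the resulting value is independent of the generic choice of $X$, e.g. by a Weil-reciprocity argument (Theorem~\ref{thm: weilrecipgen}) showing that replacing $X$ by $X'$ changes numerator and denominator by a principal-divisor factor that cancels. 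I expect the disjointness/well-definedness argument and the precise tracking of which conjugate and which exponent ($\alpha$ vs.\ $1$) appears to together constitute essentially all of the real work; the divisor-theoretic identity $f_P\circ[\alpha] \doteq g_P^\alpha$ is routine given the $R$-linearity of $[\alpha]^*$ established in Section~3.
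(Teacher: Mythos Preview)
Your divisor identity $f_P\circ[\alpha] = c\,g_P^{\alpha}$ is correct, but it is too weak to recover the pairing value.  Evaluating both sides at $\eta(Q+X)-\eta(X)$ (or at any degree-zero divisor $D'$) yields
\[
\Big(\tfrac{g_P(\eta(Q+X))}{g_P(\eta(X))}\Big)^{\alpha}
= \frac{f_P([\alpha]_*\eta(Q+X))}{f_P([\alpha]_*\eta(X))} = 1,
\]
since $Q\in E[\alpha]$ forces $[\alpha]_*\eta(Q+X)=[\alpha]_*\eta(X)$.  So the relation only confirms that the ratio lies in $\GG_m^{\otimes_\ZZ R}[\alpha]$, which you already knew; it does not compute the ratio.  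The companion term $\overline{f_Q(D_P)}^{-1}$ cannot ``cancel the extra $\alpha$'': it is a fixed element, not an $\alpha$-th root of the first factor, and your proposed representative $[\alpha]_*((Q+X)-(X))$ for $\eta(Q)$ is the \emph{zero} divisor, so the first factor of $W_{\overline\alpha}$ computed against it is simply $1$.

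What is missing is a formula for $g_P\circ\eta$ itself, not for its $\alpha$-th power.  The paper obtains this by introducing an auxiliary principal divisor $H_X:=[\alpha]_*\eta(X)-\overline\alpha\cdot\eta(X)=\div(h_X)$ and checking (via Weil reciprocity) that
\[
g_P(\eta(X)) \;\doteq\; f_P(\eta(X))\,\overline{h_X(D_P)}.
\]
Taking the ratio at $Q+X$ and $X$, the $h$-factors combine to give precisely $\overline{f_Q(D_P)}^{-1}$ with $\div(f_Q)=\overline\alpha\cdot(\eta(Q+X)-\eta(X))$, and one reads off $W_{\overline\alpha}(D_P,\eta(Q+X)-\eta(X))$.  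The correction $h_X$ is the missing idea; without it the ``$\alpha$ vs.\ $1$'' discrepancy you flagged cannot be resolved.
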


\begin{proof}
\textbf{Formula for $g_P$.}  
Fix $f_P$ to have divisor $\alpha \cdot D_P$ where $D_P \sim \eta(P)$ such that $D_P$ has no support at $\mathcal{O}$.
Since $[\alpha]^*((P)-(\mathcal{O}))$ is principal by the assumption 
that $P \in E[\overline\alpha]$, one sees $[\alpha]^* \eta(P)$ and 
therefore $[\alpha]^* D_P$ are principal.  Therefore let $g_P$ be a function with this divisor.  

We now obtain a formula for $g_P \circ \eta$.  
%
%

Define for any $X \in E$ such that $\eta(X)$ and $\eta(Q+X)$ have support disjoint from $[\alpha]^* D_P$, 
    \[
	    H_{X} := [{\alpha}]_* \eta(X) - \overline{\alpha} \cdot \eta(X) \in \Div_R^0(E).
    \]
    This is principal since $\eta(P) = (\tau + [-\tau]_*)( (P) - (\mathcal{O}) )$ and 
    \[
      (-\tau + [\overline\tau]_*)(\tau + [-\tau]_*) = N(\tau) + [-N(\tau)]_* + \tau \left( [\Tr(\tau)]_* - \Tr(\tau) \right)
    \]
    takes degree-zero divisors to principal divisors.  Write $H_{X} = \div(h_{X})$.  
    Set
    \[
	    g_{P}'(X) :=
	    f_{P}(\eta(X))
    \overline{
	    h_{X}( D_P )
    }.
    \]
    We have 
\begin{align*}
	g_{P}'(X)^{{\alpha}} =&  f_{P}( \overline{\alpha} \cdot \eta(X)){ \overline{h_{X}( {\alpha} \cdot D_P )}  } \\
	=& f_{P}( \overline{\alpha}\cdot \eta(X) +  \div(h_{X}) ) \\
=& f_{P}([{\alpha}]_* \eta(X)  )
\end{align*}
We obtain $g_{P}'^{{\alpha}} = f_{P} \circ  [\alpha] \circ \eta$.  
    Let $\Gamma = [-\tau]^* + \overline{\tau}$.  
   Observe that $\Gamma \div(f) = \div(f \circ \eta)$ for any $f$.
Then
	\[
  {\alpha} \cdot \div(g_P') 
    = 
		\div(g_P'^{{\alpha}}) =
		 \div( ([\alpha]^* f_P) \circ \eta ) 
     = \Gamma [\alpha]^* {\alpha} \cdot D_P 
     = \alpha \cdot \Gamma  [\alpha]^* D_P.
\]
Therefore,
\[
  \div( g_P') = \Gamma [\alpha]^* D_P.
\]
Hence $g_P' = g_P \circ \eta$ up to a choice of scalar multiple.

\textbf{Equivalence of pairing formulas.}
Then, since $Q \in E[\alpha]$, we have $[\alpha]_* \eta(Q+X) = [\alpha]_* \eta(X)$, and so the divisor
\[
	\div(h_{X}) - \div(h_{Q+X}) =  \overline\alpha \cdot (\eta(Q+X)-\eta(X)) - [\alpha]_* (\eta(Q+X) - \eta(X))  = \overline\alpha \cdot (\eta(Q+X)-\eta(X))
\]
is the divisor of a function $f_{Q}$.
We may now compute
\begin{align*}
 \frac{ g_{P}(\eta(Q + X))) }{ g_{P}(\eta(X)) }  
&= \frac{ f_{P}(\eta(Q + X))) \overline{h_{Q+X}(D_P)} }{ f_{P}(\eta(X)) \overline{h_{X}( D_P ) }}\\
&= { f_{P}(\eta(Q+X) - \eta(X)  ) }{ \overline{f_{Q}(D_P)}^{-1} } \\
&= W_\alpha( D_P, \eta(Q+X)-\eta(X)) \\
&= \widehat{W}_\alpha(P,Q).
\end{align*}
\end{proof}

\begin{theorem}
	Let $\alpha \in R$.
	Let $K$ be a finite field with algebraic closure $\overline{K}$ and characteristic coprime to $N(\alpha)$.  Suppose also that $n=N(\alpha)$ is coprime to the discriminant of $R$. 
    The pairing
    \[
	    \widehat{W}_\alpha : E[\overline{\alpha}](\overline{K}) \times E[\alpha](\overline{K}) \rightarrow (R/nR) [\alpha],
\quad
\widehat{W}_\alpha(P,Q) = W_{\overline{\alpha}}( 
{\eta}(P),
\eta(Q)
).
\]
is non-degenerate.
\end{theorem}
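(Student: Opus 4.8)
The plan is to mirror the proof of Theorem~\ref{thm:weilnon}: first prove non-degeneracy for $\alpha = n \in \ZZ$, then deduce the general case by coherence. As in the discussion preceding Theorem~\ref{thm:alternate-weil}, over $\overline K$ the codomain $(R/nR)[\alpha]$ identifies with $\GG_m^{\otimes_\ZZ R}[\alpha]$ (this is where coprimality of $n$ with $\operatorname{char} K$ and with the discriminant of $R$ is used on the codomain side), so it suffices to show that $P \mapsto \widehat W_\alpha(P,\cdot)$ and $Q \mapsto \widehat W_\alpha(\cdot,Q)$ are injective.

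For $\widehat W_n$, I would start from Proposition~\ref{prop:weil-n}(1) applied to $D_P = \eta(P)$ and $D_Q = \eta(Q)$, for which \eqref{eqn:eta} gives representing points $(P_0,P_1) = ([-\tau]P,P)$ and $(Q_0,Q_1) = ([-\tau]Q,Q)$. Expanding $\prod_{i,j} e_n(P_i,Q_j)^{\overline{\tau_j}\tau_i}$ and simplifying using $\widehat{[\gamma]} = [\overline\gamma]$, bilinearity of $e_n$, and $\tau^2 = \Tr(\tau)\tau - N(\tau)$, one obtains a closed formula for $\widehat W_n(P,Q)$ analogous to the one relating $\widehat T_n$ to the classical Tate pairing in the introduction; in particular its $\tau$-component is $e_n([\tau - \overline\tau]P, Q)$. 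Hence, if $\widehat W_n(P,\cdot) \equiv 1$ on $E[n]$, then $e_n([\tau - \overline\tau]P, Q) = 1$ for all $Q \in E[n]$, so $[\tau - \overline\tau]P = \mathcal{O}$ by non-degeneracy of $e_n$ (Proposition~\ref{prop: weilprop}(4)). Since $\tau - \overline\tau = 2\tau - \Tr(\tau)$ has $N(\tau - \overline\tau) = 4N(\tau) - \Tr(\tau)^2 = -\operatorname{disc}(R) > 0$, applying $[\overline{\tau - \overline\tau}]$ to $[\tau-\overline\tau]P=\mathcal{O}$ yields $[-\operatorname{disc}(R)]P = \mathcal{O}$, so $P \in E[n] \cap E[-\operatorname{disc}(R)] = \{\mathcal{O}\}$ because $\gcd(n,\operatorname{disc}(R)) = 1$. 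This proves left non-degeneracy of $\widehat W_n$; right non-degeneracy follows from conjugate skew-Hermitianity (Theorem~\ref{thm:weilprops-red}(2)) since $\overline n = n$.

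For general $\alpha$ with $n = N(\alpha)$, I would argue exactly as in the proof of Theorem~\ref{thm:weilnon}, now using coherence of $\widehat W$ (Theorem~\ref{thm:weilprops-red}(4)) in place of that of $W$. Since $E[\overline\alpha], E[\alpha] \subseteq E[n]$, coherence gives $\widehat W_n(P,Q) = \widehat W_\alpha(P, [\overline\alpha]Q)$ for $P \in E[\overline\alpha]$, $Q \in E[n]$ (with $[\overline\alpha]Q \in E[\alpha]$), and $\widehat W_n(P,Q) = \widehat W_\alpha([\alpha]P, Q)$ for $P \in E[n]$, $Q \in E[\alpha]$ (with $[\alpha]P \in E[\overline\alpha]$). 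Thus if $\widehat W_\alpha(P,\cdot)$ vanishes on $E[\alpha]$ then $\widehat W_n(P,\cdot)$ vanishes on $E[n]$, forcing $P = \mathcal{O}$ by the previous paragraph; symmetrically on the right.

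The main obstacle is the $\widehat W_n$ step. Deriving the closed form from Proposition~\ref{prop:weil-n}(1) is a delicate bookkeeping exercise in the multiplicative-tensor notation together with the adjoint relations $\widehat{[\gamma]} = [\overline\gamma]$; and — unlike for $e_n$, or for $W_n$ on $\Pic_R^0(E)$, where the representing points may be chosen freely — non-degeneracy here genuinely requires $n$ coprime to $\operatorname{disc}(R)$, since the only component of $\widehat W_n$ that is free to vary, namely $e_n([\tau - \overline\tau]P, Q)$, detects $P$ only modulo the $(\tau - \overline\tau)$-torsion, whose order divides a power of $\operatorname{disc}(R)$.
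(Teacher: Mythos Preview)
Your proof is correct, and it follows a genuinely different route from the paper's.

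The paper does \emph{not} reduce to the case $\alpha = n \in \ZZ$; instead it works directly with general $\alpha$ via the alternate definition of $\widehat W_\alpha$ (Theorem~\ref{thm:alternate-weil}). Assuming $\widehat W_\alpha(P,\cdot)\equiv 1$, it deduces that $g_P\circ\eta$ is fixed by all translations $t_Q^*$, $Q\in E[\alpha]$, hence lies in $[\alpha]^*\overline K(E)$ by the Galois correspondence for $\overline K(E)/[\alpha]^*\overline K(E)$. A divisor computation then yields that $\Gamma\,\eta(P)$ is principal, from which one extracts $[\tau-\overline\tau]P \in E[2]$; a further argument (replacing $\tau$ by $\tau+1$ if necessary) upgrades this to $[\tau-\overline\tau]P=\mathcal O$, and coprimality with the discriminant finishes as in your proof.

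Your argument is more elementary and more closely parallels the proof of Theorem~\ref{thm:weilnon}: you handle $\widehat W_n$ first by reading off the $\tau$-component $e_n([\tau-\overline\tau]P,Q)$ from Proposition~\ref{prop:weil-n}(1) (this is in fact the $\alpha=n$ case of the closed formula stated just after Theorem~\ref{thm:weilprops-red}, which the paper proves via the alternate definition, whereas you derive it from Proposition~\ref{prop:weil-n}(1) and compatibility of $e_n$), and then invoke coherence to pass to general $\alpha$. This avoids the function-field Galois theory and the $2$-torsion subtlety entirely. The paper's approach, on the other hand, highlights the role of the second definition of the Weil pairing and treats all $\alpha$ uniformly without a preliminary integer case.
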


Observe that in writing the codomain in the way we do here, we are using a discrete logarithm as described immediately before Theorem~\ref{thm:weilnon}.

\begin{proof}
	Note that $(\overline{K}^*)^{\otimes_\ZZ R}[\alpha] \cong (R/nR)[\alpha]$, as in the proof of Theorem~\ref{thm:weilnon}.
	We will use the alternate definition of $\widehat{W}_\alpha$ in Theorem~\ref{thm:alternate-weil}, and the reader is asked to refer to the notation in that proof.

 	In particular, fix $P \in E[\overline{\alpha}](\overline K)$ and assume that $\widehat{W}_\alpha(P,Q)=1$ for all $Q \in E[\alpha](\overline{K})$.  Then, using the notation of Theorem~\ref{thm:alternate-weil} and its proof, $g_P(\eta(X+Q)) = g_P(\eta(X))$ for all $Q \in E[\alpha](\overline{K})$, where $X \in E(\overline{K})$ need only satisfy appropriate conditions on supports.  So $t_{Q}^*$ fixes $g_P \circ \eta \in (\overline{K}(E)^*)^{\otimes_\ZZ R}$.  

 The map
\[
E[\alpha] \rightarrow \operatorname{Aut}[ \overline K(E) / [\alpha]^*\overline K(E) ], \qquad S \mapsto t_S^*
\]
	is an isomorphism \cite[Thm III.4.10(b)]{Sil1} ($t_S$ denoting translation-by-$S$).  
  Therefore, $g_P \circ \eta = h \circ [{\alpha}]$ for some $h \in (\overline K(E)^*)^{\otimes_\ZZ R}$.  Hence, using $f_P$ as in the proof of Theorem~\ref{thm:alternate-weil},
\[
	h^{\overline\alpha} \circ [\alpha] =
	(h \circ [{\alpha}])^{\overline{\alpha}} = g_P^{\overline{\alpha}} \circ \eta = f_{P} \circ [{\alpha}] \circ \eta = f_P \circ \eta \circ [\alpha],
\]
implying that $f_P \circ \eta = h^{\overline{\alpha}}$.  Taking divisors,
	\[
		\overline\alpha \cdot \div(h) = \div(f_P \circ \eta)
    = \Gamma \div(f_P)
    = \Gamma \overline\alpha \cdot D_P
    = \overline\alpha \cdot \Gamma D_P.
	\]
	From this, we determine that $\Gamma D_P$ is principal.  Recall that $D_P \sim \eta(P) = ([-\tau]P) - (\mathcal{O}) + \tau \left( (P) - (\mathcal{O}) \right)$.  Thus, $\Gamma \eta(P)$ is principal.  Momentarily writing $D' = (P) - (\mathcal{O})$,
	\begin{align*}
		[-\tau]^* \eta(P) + \overline\tau \eta(P) 
		&=
		[-\tau]^*[-\tau]_* D' + N(\tau) D' + \Tr(\tau) [-\tau]_* D' + \tau \left( [-\tau]^*D' - [-\tau]_*D' \right).
	\end{align*}
	From principality, we conclude that, in particular, 
	\[
    [2N(\tau) - \Tr(\tau) \tau]P = [\tau - \overline\tau]P = \sum_{S \in E[-\tau]} S \in E[2].
	\]
  Call this two-torsion point $U$.  We have $U \neq \mathcal{O}$ if and only if the kernel of $-\tau$ is cyclic of even order.  Without loss of generality, we can replace $\tau$ with $\tau + 1$ to avoid this case.  From this,
\[
  [2N(\tau) - \Tr(\tau) \tau]P = [\tau - \overline\tau]P = \mathcal{O}.
	\]
	The norms of these coefficients of $P$ are $-N(\tau) \Delta_R$ and $\Delta_R$, where $\Delta_R$ is the discriminant of $R$.  Recalling that $P \in E[\overline\alpha]$, and that $N(\alpha)$ and $\Delta_R$ are coprime, we can conclude that $P = \mathcal{O}$.  
\end{proof}

We can describe $\widehat{W}_\alpha$ in terms of the usual $\alpha$-Weil pairing.

\begin{theorem}
Let $e_\alpha$ be the $\alpha$-Weil pairing as described in Section~\ref{sec:backweil}.  Then
    \[
     \widehat{W}_\alpha (P,Q) = 
   \left( 
	   e_{\overline\alpha}(P,Q)^{2N(\tau)}
	   e_{\overline\alpha}([-\tau]P,Q)^{\Tr(\tau)} \right) \left(
	   e_{\overline\alpha}([\tau - \overline{\tau}]P,Q)
   \right)^{\tau}.
    \]
        Furthermore, when both of the following quantities are defined,
    \[
       \widehat{W}_{N(\alpha)}(P,Q) = \widehat{W}_\alpha(P,Q)^{\overline\alpha}.
    \]
\end{theorem}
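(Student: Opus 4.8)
The relation $\widehat{W}_{N(\alpha)}(P,Q)=\widehat{W}_\alpha(P,Q)^{\overline\alpha}$ is a formal consequence of Proposition~\ref{prop:weil-n}(2). Applying that proposition with $\overline\alpha$ in place of $\alpha$, and using $N(\overline\alpha)=N(\alpha)$, gives $W_{N(\alpha)}(D,D')=W_{\overline\alpha}(D,D')^{\overline\alpha}$ whenever both sides are defined; take $D=\eta(P)$ and $D'=\eta(Q)$ and recall that $\widehat{W}_\beta(P,Q)=W_{\overline\beta}(\eta(P),\eta(Q))$ for every $\beta$, so that (for $\beta=N(\alpha)\in\ZZ$) the left side is $\widehat{W}_{N(\alpha)}(P,Q)$ and the right side is $\widehat{W}_\alpha(P,Q)^{\overline\alpha}$.

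For the first formula the plan is to adapt the argument behind the Tate-pairing reduction (Theorem~\ref{thm:cmtatereduc}), now using the isogeny Weil pairing together with the function-theoretic description of $\widehat{W}_\alpha$ from Theorem~\ref{thm:alternate-weil} rather than the divisorial definition: $\widehat{W}_\alpha(P,Q)=g_P\bigl(\eta(Q+X)-\eta(X)\bigr)$, where $\div(g_P)=[\alpha]^*D_P$ for $D_P\sim\eta(P)$ and $X$ is an auxiliary point. Write $D_P=\bigl((A)-(\mathcal O)\bigr)+\tau\bigl((B)-(\mathcal O)\bigr)$ with $A=[-\tau]P$ and $B=P$. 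Because $[\alpha]^*$ on $\Pic^0$ corresponds to $\widehat{[\alpha]}=[\overline\alpha]$ on $E$ and $A,B\in E[\overline\alpha]$, each of $[\alpha]^*\bigl((A)-(\mathcal O)\bigr)$ and $[\alpha]^*\bigl((B)-(\mathcal O)\bigr)$ is already principal; calling them $\div(g_A)$ and $\div(g_B)$, we may take $g_P=g_A\,g_B^{\tau}$, the constant factor relating the two sides being harmless since $\eta(Q+X)-\eta(X)$ has degree zero in each of its $1$- and $\tau$-components. Here $g_A,g_B$ are precisely the functions of Definition~\ref{defn: weil2} for the isogeny $[\alpha]$, so $g_A\bigl((Y+S)-(Y)\bigr)=e_\alpha(A,S)$ for $S\in E[\alpha]$, with $e_\alpha\colon E[\overline\alpha]\times E[\alpha]\to\mu_{N(\alpha)}$ the $\alpha$-Weil pairing of Section~\ref{sec:backweil}.

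Expanding $\eta(Q+X)-\eta(X)=\bigl(([-\tau](Q+X))-([-\tau]X)\bigr)+\tau\bigl((Q+X)-(X)\bigr)$, evaluating $g_P=g_A g_B^{\tau}$ with the rule $g(\tau\cdot D)=g(D)^{\overline\tau}$, and recognising each factor as a value of $e_\alpha$, produces
\[
\widehat{W}_\alpha(P,Q)=e_\alpha([-\tau]P,[-\tau]Q)\;e_\alpha([-\tau]P,Q)^{\overline\tau}\;e_\alpha(P,[-\tau]Q)^{\tau}\;e_\alpha(P,Q)^{N(\tau)}.
\]
I would then collapse the two mixed entries via the identity $e_\alpha(U,[-\tau]V)=e_\alpha([-\overline\tau]U,V)$ (so in particular $e_\alpha([-\tau]P,[-\tau]Q)=e_\alpha([\overline\tau\tau]P,Q)=e_\alpha(P,Q)^{N(\tau)}$), which follows from the coherence of the Weil pairing (Proposition~\ref{prop: weilprop}(5)) applied to the two factorisations $[\alpha]\circ[-\tau]=[-\tau]\circ[\alpha]$ of $[-\tau\alpha]$. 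Substituting, and then using $\overline\tau=\Tr(\tau)-\tau$, $N(\tau)=\tau\overline\tau$, and bilinearity (so that $e_\alpha([-\tau]P,Q)^{-1}=e_\alpha([\tau]P,Q)$ and $e_\alpha([\tau]P,Q)\,e_\alpha([-\overline\tau]P,Q)=e_\alpha([\tau-\overline\tau]P,Q)$) rearranges the four factors into $\bigl(e_\alpha(P,Q)^{2N(\tau)}e_\alpha([-\tau]P,Q)^{\Tr(\tau)}\bigr)\bigl(e_\alpha([\tau-\overline\tau]P,Q)\bigr)^{\tau}$, which is the asserted identity.

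The one step that is not pure bookkeeping is the factorisation $g_P=g_A\,g_B^{\tau}$: one must verify that $[\alpha]^*\eta(P)$ genuinely splits, component by component in the basis $1,\tau$, as a sum of classical principal divisors — this is where $P\in E[\overline\alpha]$ and $\gcd(N(\alpha),\operatorname{char}K)=1$ enter, the latter so that $[\alpha]^*$ is the honest preimage divisor — and that the scalar ambiguity in $g_A,g_B$ is annihilated by the degree-zero observation above. Everything after that is bilinearity and the standard functoriality of the Weil pairing; as in the proof of Theorem~\ref{thm:alternate-weil}, the genuine care needed is in keeping the vincula straight, since $\eta$ twists the $R$-action and $\widehat{[\tau]}=[\overline\tau]$.
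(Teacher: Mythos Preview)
Your proof is correct and follows the paper's approach exactly: invoke Theorem~\ref{thm:alternate-weil}, split $g_P=g_Ag_B^{\tau}$ along the basis $1,\tau$, evaluate at $\eta(Q+X)-\eta(X)$ to recognise four classical isogeny-Weil-pairing values, and then collapse via bilinearity and compatibility (Proposition~\ref{prop: weilprop}); for the second identity you cite Proposition~\ref{prop:weil-n}(2) where the paper instead appeals to Coherence in Theorem~\ref{thm:weilprops-red}, but both are one-line deductions. One cosmetic discrepancy worth flagging: you obtain $e_\alpha$ (consistent with the $[\alpha]^*$ appearing in Theorem~\ref{thm:alternate-weil}), whereas the paper's statement and proof carry $e_{\overline\alpha}$ and $[\overline\alpha]^*$---your subscript is the one that agrees with Theorem~\ref{thm:alternate-weil} as stated.
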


\begin{proof}
We have
\[
  \widehat{W}_\alpha(P,Q)  = W_{\overline\alpha}(\eta(P), \eta(Q)) = g_P( \eta(Q+X) - \eta(X) )
\]
where
\[
  \div(g_P) = [\overline\alpha]^* D_P, \quad D_P \sim \eta(P) = ([-\tau]P) - (\mathcal{O}) + \tau \left( (P) - (\mathcal{O}) \right).
\]
Let us write this as $g_P = g_1 g_2^\tau$, where 
\[
  \div(g_i) = [\overline\alpha]^* D_{P,i}, \quad
  D_{P,1} \sim ([-\tau]P) - (\mathcal{O}), \quad
  D_{P,2} \sim (P) - (\mathcal{O}).
\]
Then we continue,
\[
  \widehat{W}_\alpha(P,Q) 
  = g_1( ([-\tau](Q+X)) - ([-\tau]X) )
  g_1( (Q+X) - (X) )^{\overline\tau}
  g_2( ([-\tau](Q+X)) - ([-\tau]X) )^{\tau}
  g_2( (Q+X) - (X) )^{\tau\overline\tau}.
\]
From this, and Definition~\ref{defn: weil2}, we get
\[
  \widehat{W}_\alpha(P,Q) = 
  e_{\overline\alpha}( [-\tau]P, [-\tau]Q )
  e_{\overline\alpha}( [-\tau]P, Q )^{\overline \tau}
  e_{\overline\alpha}( P, [-\tau]Q )^\tau
  e_{\overline\alpha}( P, Q )^{\tau \overline\tau}.
\]
Applying bilinearity and coherence from Proposition~\ref{prop: weilprop} finishes the first statement.  The second follows immediately from Theorem~\ref{thm:weilprops-red}, Coherence.
\end{proof}

Using the notation of the last subsection, define
\[
\widehat{T}_\alpha : E[\overline\alpha] \times E / [\alpha]E \rightarrow \GG_m^{\otimes_\ZZ R} / (\GG_m^{\otimes_\ZZ R})^{\alpha},
\quad
\widehat{T}_\alpha(P,Q) := T_{\overline{\alpha}}( 
{\eta}(P),
\eta(Q)
).
\]

\begin{theorem}
\label{thm:tateprops-red}
    The pairing defined above is well-defined, bilinear, and satisfies
    \begin{enumerate}
        \item Sesquilinearity:  For $P \in E[\overline\alpha]$ and $Q \in E$,    \[
    \widehat{T}_\alpha([\gamma] P, [\delta] Q)
    = \widehat{T}_\alpha(P,Q)^{{\overline\gamma}{\delta}}.
    \]
        \item Compatibility:   
            Let $\phi: E \rightarrow E'$ be an isogeny between curves with CM by $R$ and satisfy $[\alpha] \circ \phi = \phi \circ [\alpha]$.   Then for $P \in E[\overline\alpha]$ and $Q \in E$,
    \[
    \widehat{T}_\alpha(\phi P,\phi Q) = \widehat{T}_\alpha(P,Q)^{\deg \phi}.
    \]
    \item Coherence:
	    Suppose $P \in E[\overline{\alpha \beta}]$, and $Q \in E/ [\alpha\beta] E$.  Then
    \[
    \widehat{T}_{\alpha \beta}(P, Q) \bmod{ (\GG_m^{\otimes_\ZZ R})^{\alpha}}
    = \widehat{T}_{\alpha}( [\overline\beta]P, Q \bmod [{\alpha}]E ).
    \]
    Suppose $P \in E[\overline{\beta}]$, and $Q \in E/ [\alpha\beta] E$.  Then
    \[
    \widehat{T}_{\alpha \beta}(P, Q) \bmod{ (\GG_m^{\otimes_\ZZ R})^{\beta}}
    = \widehat{T}_{\beta}( P, [{\alpha}]Q \bmod [{\beta}]E ).
    \]
    \item Galois invariance: Suppose $E$ is defined over a field $K$, and suppose there is an injection $\iota: R \rightarrow \overline K$; indicate this in the notation for the pairing as discussed above.
 For $\sigma \in \operatorname{Gal}(\overline K/K)$, 
\[
	\widehat{T}^\iota_\alpha(P,Q)^\sigma = \widehat{T}^{\iota \circ \sigma}_{\alpha}(P^\sigma, Q^\sigma).
\]

    \end{enumerate}

\end{theorem}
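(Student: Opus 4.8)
The plan is to derive every assertion from the corresponding property of $T_{\overline\alpha}$ in Theorem~\ref{thm:tateprops}, pulled back through the map $\eta$ of \eqref{eqn:eta}, in exactly the way Theorem~\ref{thm:weilprops-red} was deduced from Theorem~\ref{thm:weilprops}. The single recurring subtlety is that $\eta$ is a \emph{twisted} $R$-module homomorphism: $\eta([\gamma]P)=\overline\gamma\cdot\eta(P)$ by \eqref{eqn:flea}, so each occurrence of the endomorphism action of $R$ on points becomes, on the $T_{\overline\alpha}$ side, the action of a \emph{conjugate} ring element. This is precisely what turns the right-conjugate-linearity of $T_{\overline\alpha}$ into left-sesquilinearity of $\widehat T_\alpha$.

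First I would check that the domains and codomain are the right ones. If $P\in E[\overline\alpha]$ then $\alpha\cdot\eta(P)=\eta([\overline\alpha]P)=\eta(\mathcal O)=0$, so $\eta(P)\in\Pic_R^0(E)[\alpha]=\Pic_R^0(E)[\overline{\overline\alpha}]$, the first slot of $T_{\overline\alpha}$; and since $\eta([\alpha]Q)=\overline\alpha\cdot\eta(Q)$, the map $\eta$ descends to $E/[\alpha]E\to\Pic_R^0(E)/\overline\alpha\,\Pic_R^0(E)$, the second slot of $T_{\overline\alpha}$. As $R$ is commutative here, the codomain $\GG_m^{\otimes_\ZZ R}/(\GG_m^{\otimes_\ZZ R})^{R\overline{\overline\alpha}}$ of $T_{\overline\alpha}$ is just $\GG_m^{\otimes_\ZZ R}/(\GG_m^{\otimes_\ZZ R})^{\alpha}$, as claimed. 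Well-definedness (independence of the choices of $D_P\sim\eta(P)$, of the representative and of the mod-$\overline\alpha$ class of $\eta(Q)$, and of $f_P$) and bilinearity then follow immediately from the corresponding statements of Theorem~\ref{thm:tateprops}, since $\eta$ is additive.

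Next I would dispatch the four numbered items. For sesquilinearity, $\widehat T_\alpha([\gamma]P,[\delta]Q)=T_{\overline\alpha}(\overline\gamma\cdot\eta(P),\overline\delta\cdot\eta(Q))$, and Theorem~\ref{thm:tateprops}(1) (the twist being vacuous in rank $2$) gives $T_{\overline\alpha}(\eta(P),\eta(Q))^{\overline{\overline\delta}\,\overline\gamma}=\widehat T_\alpha(P,Q)^{\delta\overline\gamma}$, which equals $\widehat T_\alpha(P,Q)^{\overline\gamma\delta}$ by commutativity of $R$. For compatibility, an isogeny $\phi$ with $[\alpha]\circ\phi=\phi\circ[\alpha]$ intertwines $\eta$, i.e.\ $\eta(\phi X)=\phi_*\eta(X)$, by the remark following \eqref{eqn:flea} (when $\alpha\notin\ZZ$ this commutation in fact forces $\phi$ to commute with all of $R$), so Theorem~\ref{thm:tateprops}(2) applies verbatim. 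For coherence, apply Theorem~\ref{thm:tateprops}(3) to $T_{\overline{\alpha\beta}}=T_{\overline\beta\,\overline\alpha}$, substituting $\eta([\overline\beta]P)=\beta\cdot\eta(P)$ in the first formula and $\eta([\alpha]Q)=\overline\alpha\cdot\eta(Q)$ in the second; the two displayed congruences fall out directly. For Galois invariance, combine Theorem~\ref{thm:tateprops}(4) with the identity $\eta_E(P)^\sigma=\eta_{E^\sigma}(P^\sigma)$ coming from the commuting square \eqref{eqn:PicAsExt2}, so that $\widehat T^\iota_\alpha(P,Q)^\sigma=T_{\overline\alpha}(\eta_E(P)^\sigma,\eta_E(Q)^\sigma)=T_{\overline\alpha}(\eta_{E^\sigma}(P^\sigma),\eta_{E^\sigma}(Q^\sigma))=\widehat T^{\iota\circ\sigma}_\alpha(P^\sigma,Q^\sigma)$.

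I expect the only real obstacle to be bookkeeping: keeping straight which conjugations come from the twist in $\eta$ and which from the right-conjugate-linearity of $T_{\overline\alpha}$, so that the resulting pairing is conjugate linear \emph{on the left} in the point variable as stated, and so that the submodules appearing in the domain and codomain ($[\alpha]E$ versus $[\overline\alpha]E$, and $(\GG_m^{\otimes_\ZZ R})^{\alpha}$ versus $(\GG_m^{\otimes_\ZZ R})^{\overline\alpha}$) match those of $T_{\overline\alpha}$ after the substitution $\alpha\mapsto\overline\alpha$. No new idea beyond Theorem~\ref{thm:tateprops} and the formal properties of $\eta$ is needed; this proof runs along the same lines as that of Theorem~\ref{thm:weilprops-red}.
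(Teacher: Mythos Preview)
Your proposal is correct and follows exactly the paper's approach: the paper's proof is the single sentence ``The proof is as for Theorem~\ref{thm:weilprops-red},'' and that proof in turn amounts to pulling back the properties of $T_{\overline\alpha}$ (resp.\ $W_{\overline\alpha}$) through the twisted homomorphism $\eta$ while ``placing the vincula carefully.'' Your write-up simply makes explicit the bookkeeping that the paper leaves to the reader.
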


\begin{proof}
	The proof is as for Theorem~\ref{thm:weilprops-red}.
\end{proof}

We can describe $\widehat{T}_n$ in terms of the usual $n$-Tate-Lichtenbaum pairing.

\begin{theorem}
\label{thm:cmtatereduc}
Let $t_n$ be the $n$-Tate-Lichtenbaum pairing as described in Section~\ref{sec:backtate}.
    \[
     \widehat{T}_n (P,Q) = 
   \left( 
	   t_n(P,Q)^{2N(\tau)}
	   t_n([-\tau]P,Q)^{\Tr(\tau)} \right) \left(
	   t_n([\tau - {\overline\tau}]P,Q)
   \right)^{\tau}.
    \]
    Furthermore, provided both of the following quantities are defined,
    \[
    \widehat{T}_{N(\alpha)}(P,Q) = \widehat{T}_\alpha(P,Q)^{\overline\alpha} \pmod{(\GG_m^{\otimes_\ZZ R})^{\alpha}}
    \]
\end{theorem}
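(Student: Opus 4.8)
The plan is to reduce both assertions to results already in hand: the explicit form of $T_n$ from Proposition~\ref{prop:tate-n} (equivalently \eqref{eqn:Tn2}) for the first identity, and the formal Coherence and Sesquilinearity of $\widehat T$ from Theorem~\ref{thm:tateprops-red} for the second, using throughout that $n\in\ZZ$ is self-conjugate and that $\eta$ twists the $R$-action as in \eqref{eqn:flea}.

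For the explicit formula, I would first write $\widehat T_n(P,Q)=T_{\overline n}(\eta(P),\eta(Q))=T_n(\eta(P),\eta(Q))$, represent $\eta(P)$ in the form $\sum_i\tau_i\bigl((P_i)-(\mathcal O)\bigr)$ with $P_0=[-\tau]P$ and $P_1=P$, and likewise $Q_0=[-\tau]Q$, $Q_1=Q$, and apply Proposition~\ref{prop:tate-n} (whose torsion hypothesis holds since $P,[-\tau]P\in E[n]$). This produces the four-term product
\[
\widehat T_n(P,Q)=t_n([-\tau]P,[-\tau]Q)\;t_n([-\tau]P,Q)^{\overline\tau}\;t_n(P,[-\tau]Q)^{\tau}\;t_n(P,Q)^{\tau\overline\tau},
\]
which mirrors the four-term expansion in the proof of the preceding theorem with $e_{\overline\alpha}$ replaced by $t_n$. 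I would then simplify using only bilinearity and compatibility of the classical pairing: since the dual of $[-\tau]$ is $[-\overline\tau]$ and $[-\overline\tau][-\tau]=[N(\tau)]$, compatibility and bilinearity give $t_n([-\tau]P,[-\tau]Q)=t_n(P,Q)^{N(\tau)}$, which together with $\tau\overline\tau=N(\tau)$ supplies the factor $t_n(P,Q)^{2N(\tau)}$; compatibility also gives $t_n(P,[-\tau]Q)=t_n([-\overline\tau]P,Q)$, and substituting $\overline\tau=\Tr(\tau)-\tau$ and $t_n([-\tau]P,Q)=t_n([\tau]P,Q)^{-1}$ regroups the remaining terms into $t_n([-\tau]P,Q)^{\Tr(\tau)}\bigl(t_n([\tau]P,Q)\,t_n([-\overline\tau]P,Q)\bigr)^{\tau}=t_n([-\tau]P,Q)^{\Tr(\tau)}\,t_n([\tau-\overline\tau]P,Q)^{\tau}$, which is the stated formula.

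For the second statement I would argue purely formally, exactly as in the corresponding clause of the $\widehat W$ theorem. Since $N(\alpha)=\alpha\overline\alpha$, the Coherence property of $\widehat T$ (Theorem~\ref{thm:tateprops-red}) with the two factors taken to be $\alpha$ and $\overline\alpha$ gives, modulo $(\GG_m^{\otimes_\ZZ R})^{\alpha}$, that $\widehat T_{N(\alpha)}(P,Q)=\widehat T_\alpha([\overline{\overline\alpha}]P,Q)=\widehat T_\alpha([\alpha]P,Q)$; here both quantities of the theorem are defined precisely when $P\in E[\overline\alpha]$, in which case $[\alpha]P\in E[\overline\alpha]$ automatically because $R$ is commutative. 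The Sesquilinearity property (Theorem~\ref{thm:tateprops-red}, with $\gamma=\alpha$ and $\delta=1$) then rewrites the right-hand side as $\widehat T_\alpha(P,Q)^{\overline\alpha}$, as claimed. One could instead derive this from the second statement of Proposition~\ref{prop:tate-n} with $\overline\alpha$ in place of $\alpha$, observing via \eqref{eqn:flea} that $P\in E[\overline\alpha]$ forces $\eta(P)\in\Pic_R^0(E)[\alpha]$, so that $\widehat T_\alpha(P,Q)=T_{\overline\alpha}(\eta(P),\eta(Q))$ is well-defined and its $\overline\alpha$-power represents $T_{N(\alpha)}(\eta(P),\eta(Q))=\widehat T_{N(\alpha)}(P,Q)$.

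I expect the only real friction to be the first identity: carrying the vincula correctly through the four-term product and applying the compatibility relation for the dual of $[-\tau]$ in the right direction, so that the terms $t_n([-\tau]P,[-\tau]Q)$ and $t_n(P,Q)^{\tau\overline\tau}$ collapse to $t_n(P,Q)^{2N(\tau)}$ while $t_n([-\tau]P,Q)^{\overline\tau}$ and $t_n(P,[-\tau]Q)^{\tau}$, after using $\overline\tau=\Tr(\tau)-\tau$, assemble into the single $\tau$-twisted factor $t_n([\tau-\overline\tau]P,Q)^{\tau}$. This step is mechanical but prone to sign and conjugation slips; everything in the second statement is immediate from properties already proved.
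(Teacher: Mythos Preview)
Your proposal is correct and follows essentially the same route as the paper. For the first identity, both you and the paper expand $\widehat T_n(P,Q)=T_n(\eta(P),\eta(Q))$ via Proposition~\ref{prop:tate-n} into the four-term product in $t_n$ and then simplify using bilinearity and compatibility of the classical pairing (the paper says ``coherence,'' but what is actually used from Proposition~\ref{prop:tateprops-classical} is compatibility, as you correctly identify). For the second identity, the paper invokes Proposition~\ref{prop:tate-n} directly; your primary route through Coherence and Sesquilinearity of $\widehat T$ from Theorem~\ref{thm:tateprops-red} is an equally valid one-line derivation, and you also note the paper's route as an alternative.
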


\begin{proof}
  Using Proposition~\ref{prop:tate-n} and \eqref{eqn:eta}, 
  \[
    \widehat{T}_n(P,Q) = T_n( \eta(P), \eta(Q) )
    = t_n([-\tau]P, [-\tau]Q) t_n([-\tau]P,Q)^{\overline \tau} t_n(P, [-\tau]Q)^{\tau} t_n(P,Q)^{\overline\tau \tau}.
  \]
  Applying bilinearity and coherence from Proposition~\ref{prop:tateprops-classical} gives the first result.  The second follows immediately from Proposition~\ref{prop:tate-n}.
\end{proof}

Our final result is about non-degeneracy.  

\begin{proposition}
	Let $K$ be a finite field, and let $E$ be an elliptic curve defined over $K$.  Let $\alpha \in R$ be such that $N(\alpha)$ is coprime to $char(K)$ and the discriminant of $R$.  Let $N = N(\alpha)$.  Suppose $K$ contains the $N$-th roots of unity, and $E[N] = E[N](K)$.
    Then 
    \[
\widehat{T}_\alpha : E[\overline{\alpha}](K) \times E(K) / [\alpha] E(K) \rightarrow (K^*)^{\otimes_\ZZ R} /  ((K^*)^{\otimes_\ZZ R})^{\alpha},
\] is non-degenerate.  Furthermore, if $P$ has annihilator $\overline\alpha R$, then $T_\alpha(P, \cdot)$ is surjective; and if $Q$ has annihilator $\alpha R$, then  $T_\alpha(\cdot, Q)$ is surjective.
\end{proposition}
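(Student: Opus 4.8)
The plan is to imitate the proof of Theorem~\ref{thm:tatenon}, with everything pulled back along $\eta$. Recall that $\widehat{T}_\alpha(P,Q)=T_{\overline\alpha}(\eta(P),\eta(Q))$, that $\eta$ is injective with the twist $\eta([\gamma]P)=\overline\gamma\cdot\eta(P)$, and that --- assuming, as elsewhere in this section, that the CM of $E$ by $R$ is defined over $K$ --- $\eta$ is Galois-equivariant and carries $E[\overline\alpha](K)$ into $\Pic_R^0(E)[\alpha](K)$. Exactly as in Theorem~\ref{thm:tatenon}, the codomain $(K^*)^{\otimes_\ZZ R}/((K^*)^{\otimes_\ZZ R})^{\alpha}$ is $R$-module isomorphic to $R/\alpha R$ (using $N(\alpha)\mid q-1$), and since $R$ is an imaginary quadratic order with $N(\alpha)$ coprime to the discriminant this is a finite principal ideal ring. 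This is precisely the setting of Lemma~\ref{lem:tatenon}, so it suffices to prove non-degeneracy; the two surjectivity statements then follow by the argument of that lemma, as I explain at the end. The non-degeneracy itself I would reduce to the case $\alpha=n\in\ZZ$.

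For the integer case $\widehat{T}_n$, Theorem~\ref{thm:cmtatereduc} expresses
\[
\widehat{T}_n(P,Q)=\bigl(t_n(P,Q)^{2N(\tau)}t_n([-\tau]P,Q)^{\Tr(\tau)}\bigr)\bigl(t_n([\tau-\overline\tau]P,Q)\bigr)^{\tau},
\]
a product of a term in the $1$-slot and a term in the $\tau$-slot of $\GG_m^{\otimes_\ZZ R}/(\GG_m^{\otimes_\ZZ R})^{n}$, so $\widehat{T}_n$ is trivial exactly when both components are. For left non-degeneracy, if $\widehat{T}_n(P,Q)$ is trivial for all $Q\in E(K)$ then $t_n([\tau-\overline\tau]P,Q)$ is trivial in $K^*/(K^*)^n$ for all $Q$; since $[\tau-\overline\tau]P\in E(K)[n]$ and $K$ is finite containing $\mu_n$, non-degeneracy of the classical pairing (Proposition~\ref{prop:tateprops-classical}) gives $[\tau-\overline\tau]P=\mathcal{O}$. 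Now $(\tau-\overline\tau)^2=\Delta_R$, so the primes dividing the ideal $(\tau-\overline\tau)$ are the ramified ones, none of which divides $N(\overline\alpha)=N(\alpha)$; hence $(\tau-\overline\tau)$ and $(\overline\alpha)$ are comaximal, and $P\in E[\tau-\overline\tau]\cap E[\overline\alpha]=E[R]=\{\mathcal{O}\}$. Right non-degeneracy is symmetric: given $Q\notin[n]E(K)$, choose $P'$ with $t_n(P',Q)\ne 1$ and put $P=[(\tau-\overline\tau)^{-1}]P'$, the inverse taken on $E[n]$ (legitimate since $N(\tau-\overline\tau)=\pm\Delta_R$ is a unit mod $n$); then the $\tau$-component of $\widehat{T}_n(P,Q)$ is $t_n(P',Q)\ne 1$.

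Next I would pass from $\widehat{T}_n$ to general $\alpha$, with $n=N(\alpha)$. For the left this is the clean common-representative trick used for $T_\alpha$ inside Theorem~\ref{thm:tatenon}: if $\div(f)=\alpha\cdot\eta(P)$ then $f(\eta(Q))$ represents $\widehat{T}_\alpha(P,Q)$ modulo $\alpha$-powers while $f^{\overline\alpha}(\eta(Q))=f(\eta(Q))^{\overline\alpha}$ represents $\widehat{T}_n(P,Q)=T_n(\eta(P),\eta(Q))$ modulo $n$-th powers; so $\widehat{T}_\alpha(P,\cdot)\equiv 1$ forces each $f(\eta(Q))$ to be an $\alpha$-power, hence each $f(\eta(Q))^{\overline\alpha}$ to be an $n$-th power, hence $\widehat{T}_n(P,\cdot)\equiv 1$, and $P=\mathcal{O}$ by the previous paragraph. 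For the right this is not enough by itself, and I would run the $\beta$-reduction of Theorem~\ref{thm:tatenon}: choose $\beta\in\ZZ[\alpha]$ with $(\alpha,\beta)=\ZZ[\alpha]$ and $m:=\alpha\beta\in\ZZ$, $m\mid n$; writing $1=u\alpha+v\beta$ and setting $Q'=[v]Q$ one has $Q\equiv[\beta]Q'\pmod{[\alpha]E(K)}$ with $Q'\notin[m]E(K)$; apply right non-degeneracy of $\widehat{T}_m$ (the integer case, $m\mid n$) to produce $P\in E[m](K)$ with $\widehat{T}_m(P,Q')\ne 1$; then $[\overline\beta]P\in E[\overline\alpha](K)$, and Coherence (Theorem~\ref{thm:tateprops-red}) relating $\widehat{T}_m(P,Q')$ to $\widehat{T}_\alpha([\overline\beta]P,Q')$, together with $\widehat{T}_\alpha([\overline\beta]P,[\beta]Q')=\widehat{T}_\alpha([\overline\beta]P,Q')^{\beta}$ and coprimality of $\beta$ and $\alpha$, transports non-triviality to $\widehat{T}_\alpha([\overline\beta]P,Q)$. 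Finally, for the surjectivity: once $\widehat{T}_\alpha$ is non-degenerate, the image $\widehat{T}_\alpha(P,E(K)/[\alpha]E(K))$ is an $R$-submodule of $R/\alpha R$, and because $\widehat{T}_\alpha$ is conjugate-linear on the left and $\eta$ twists the action, $\gamma$ annihilates this image iff $[\overline\gamma]P=\mathcal{O}$, so the annihilator of the image is $\overline{\operatorname{Ann}(P)}$; when $\operatorname{Ann}(P)=\overline\alpha R$ this is exactly $\alpha R$, and a submodule of $R/\alpha R$ with annihilator exactly $\alpha R$ is the whole ring (the principal-ideal-ring argument of Lemma~\ref{lem:tatenon}), so $\widehat{T}_\alpha(P,\cdot)$ is onto. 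The statement for $\widehat{T}_\alpha(\cdot,Q)$ is the same, using linearity in the second variable: its image has annihilator $\operatorname{Ann}(Q)=\alpha R$.

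I expect the right-hand non-degeneracy to be the main obstacle. Non-degeneracy of the classical $t_n$ only ever produces values that are ``not $n$-th powers'', whereas the codomain of $\widehat{T}_\alpha$ kills the larger group of $\alpha$-powers, and one must simultaneously force the left argument to have order dividing $\overline\alpha$ rather than merely $n$; the $\beta$-reduction is what squares these away, but it is delicate, and the bookkeeping is made worse by the twist $\eta([\gamma]P)=\overline\gamma\cdot\eta(P)$, which flips a conjugation at every appeal to coherence or sesquilinearity. Nonetheless the whole argument runs parallel to the rank-two case of Theorem~\ref{thm:tatenon} with $\eta$ inserted, so I expect no genuinely new difficulty beyond careful tracking of conjugations.
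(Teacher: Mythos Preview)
Your proposal follows the paper's proof essentially step for step: reduce to non-degeneracy via Lemma~\ref{lem:tatenon}; handle $\widehat{T}_n$ for integers $n$ using Theorem~\ref{thm:cmtatereduc} and the $\tau$-component; lift left non-degeneracy to general $\alpha$ by the shared-representative trick; and attack right non-degeneracy with the $\beta$-reduction. The match is close enough that one step which is loose in the paper is also loose in your proposal. In the $\beta$-reduction, coherence only gives $\widehat{T}_\alpha([\overline\beta]P,Q')\equiv\widehat{T}_m(P,Q')$ modulo $\alpha$-powers, so what you need is that $\widehat{T}_m(P,Q')$ is not an $\alpha$-power; but right non-degeneracy of $\widehat{T}_m$ only produces a $P\in E[m]$ for which $\widehat{T}_m(P,Q')$ is not an $m$-th power, and this is strictly weaker (for instance $\widehat{T}_m(P,Q')$ could land in $\alpha R/mR\setminus\{0\}$, in which case $\widehat{T}_\alpha([\overline\beta]P,Q')=0$). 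Multiplying by $\beta$ and invoking coprimality does not close this gap: those manoeuvres let you pass between $Q$ and $Q'$, not between ``non-$m$-th-power'' and ``non-$\alpha$-power''.

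Your final paragraph in fact contains the missing ingredient. Once left non-degeneracy of $\widehat{T}_\alpha$ is established, your adaptation of the Lemma~\ref{lem:tatenon} argument (annihilator of the image equals $\overline{\operatorname{Ann}(P)}$) shows that for a generator $P$ of $E[\overline\alpha](K)\cong R/\overline\alpha R$ the map $\widehat{T}_\alpha(P,\cdot)$ is onto $R/\alpha R$. Under the hypothesis $E[N]=E[N](K)$ one has $E[\alpha]\subset E(K)$, so $[\alpha]\colon E(K)\to E(K)$ has kernel of size $N(\alpha)$ and hence $|E(K)/[\alpha]E(K)|=N(\alpha)=|R/\alpha R|$; thus $\widehat{T}_\alpha(P,\cdot)$ is a bijection, which yields right non-degeneracy outright and renders the $\beta$-reduction unnecessary. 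Reordering your argument so that this surjectivity-plus-counting step replaces the $\beta$-reduction closes the gap cleanly.
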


\begin{proof}
	First, the target is isomorphic to the finite $R$-module $R/{\alpha}R$, which is a principal ideal ring (using the coprimality to the discriminant).  So we can apply Lemma~\ref{lem:tatenon}, and need only show the non-degeneracy.

Recall that $R = \ZZ[\tau]$ for some $\tau$ and since $N$ is coprime to the discriminant, $N$ is coprime to $\tau - \overline{\tau}$ in the sense that $(N, \tau - \overline\tau) = R$.
First we prove an auxiliary result about $\widehat{T}_N$.
Let $P \in E[N](K)$.  Choose $Q \in E(K)$ so that $t_N([\overline\tau - \tau]P,Q)$ has order $N$ (this must exist since $P$ has order $N$, and $N$ is coprime to $\overline\tau - \tau$).  Then by Theorem~\ref{thm:cmtatereduc},
        \begin{align*}
		\widehat{T}_N(P,Q) &= \left( t_N(P,Q)^{2N(\tau)}t_N([-\tau]P,P)^{\Tr(\tau)} \right) ( t_N([\tau-\overline \tau]P,Q) )^{\tau}.
    \end{align*}
    Thus $\widehat{T}_N$ is non-degenerate on the left.  On the other hand, choosing $Q$ first, then since $\tau - \overline\tau$ is coprime to $N$, there exists $P$ making this non-trivial also.  Hence we have both left and right non-degeneracy.

    Next, we consider general $\alpha$.  Let $P \in E[\overline\alpha](K)$.  Then we can let $\div(f_{\alpha,P}) = {\alpha} \cdot \eta(P)$.  Let $\div(f_{N,P}) = N \cdot \eta(P) =\overline{\alpha}  \alpha  \cdot \eta(P)$.  Then
    \[
    f_{N,P}(\eta(Q)) = f_{\alpha,P}( \eta(Q) )^{\overline\alpha}.
    \]
    This is a representative of $\widehat{T}_N(P,Q)$, and for an appropriate choice of $Q$ modulo $[N]E(K)$, is not an $N$-th power (by the first case above).  Taking this $Q$ modulo $[\alpha]E(K)$, $f_{\alpha,P}(\eta(Q))$, a representative of $\widehat{T}_\alpha(P,Q)$, is not an $\alpha$ power, i.e. non-trivial.

    On the other hand, choose $\beta \in R$ coprime to $\alpha$ with $m := \alpha \beta  \in \ZZ$ and $m$ divides $N$.  Fix non-trivial $Q \in E(K)$ modulo $[\alpha]E(K)$.  We can choose a lift of the form $[\beta] Q'$ modulo $[m]E(K)$ for some $Q' \in E(K)$.  Consider the quantity
    \[
	    f_{m,P}(\eta(Q')), \quad \div(f_{m,P}) = m\eta(P).
    \]
    Then there is some $P \in E[m](K)$ so that the quantity above, as a representative of
    $\widehat{T}_m(P,Q')$, is not an $m$-th power (as $m$ divides $N$, this follows from the first part of the proof).  But the quantity is also a representative of
    $\widehat{T}_\alpha(P,Q) = \widehat{T}_\alpha(P,Q')^\beta$, which is still not an $m$-th power.  
    So $\widehat{T}_\alpha(P,Q')$ is not an $\alpha$ power.  And so $\widehat{T}_\alpha(P,Q)$ is not an $\alpha$ power.
\end{proof}

\subsection{Computation.}

	We end by giving an explicit formula for $\widehat{T}_\alpha(P,Q)$ amenable to computation.  
	This algorithm can be adapted to compute $\widehat{W}_\alpha(P,Q)$ also.  

\begin{algorithm}
	\label{alg:alg}
	Recall Remark~\ref{rem:miller}.
Suppose $a + c\tau = \alpha$, $b + d\tau = \alpha \tau$, $a,b,c,d \in \ZZ$, 
which implies $d - c\tau = \overline{\alpha}$, $-b + a\tau = \overline{\alpha}\tau$.
We take $P \in E[\overline{\alpha}]$, $D_P = \eta(P)$, $\div(f_P) = \alpha \cdot D_P$, $f_P = f_{P,1} f_{P,2}^{\tau}$.
The following divisors are principal:
\[
\div(f_{P,1})  = a([-\tau]P) + b(P)-(a+b)(\mathcal{O}), \quad
\div(f_{P,2})  = c
([-\tau]P) + d(P) - (c+d)(\mathcal{O}).
\]
Choose an auxiliary point $S$ and define $D_Q = D_{Q,1} + \tau \cdot D_{Q,2}$ where
\[
D_{Q,1} = ([-\tau]Q + [-\tau]S) - ([-\tau]S), \quad
D_{Q,2} = (Q + S) - (S).
\]
Note that $D_Q \sim \eta(Q)$.
Then, choosing $S$ so that the necessary supports are disjoint (i.e. the support of $\div(f_{P,i})$ and $D_{Q,j}$ are disjoint for each pair $i$, $j$), the pairing is defined as
\[
\widehat{T}_\alpha(P,Q) := f_P(D_Q) =
	f_{P,1}(D_{Q,1}) f_{P,2}(D_{Q,1})^{\tau}
	\left(
	f_{P,1}(D_{Q,2}) f_{P,2}(D_{Q,2})^{\tau}
\right)^{\overline\tau}
\]
which can also be expressed as
\[
\left(
f_{P,1}(D_{Q,1})f_{P,1}(D_{Q,2})^{\Tr(\tau)}f_{P,2}(D_{Q,2})^{N(\tau)} \right)
\left( f_{P,2}(D_{Q,1})f_{P,1}(D_{Q,2})^{-1}
\right)^{\tau}.
\]
To turn this into an efficient algorithm, observe that we can compute $f_{P,i}(D)$ for any divisor $D$ supported on a constant number of points, in $O(\log \max\{a,b,c,d\})$ steps, as follows.  
Define
\[
	\div(h_{P,n}) = n(P) - ([n]P) - (n-1)(\mathcal{O}).
\]
We can compute $h_{P,n}(D)$ using a double-and-add algorithm \cite{Miller} \cite[\S 26.3.1]{GalbraithBook}, evaluating at $D$ at each step.  Then observe that
\[
	\div(f_{P,1}) = \div(h_{[-\tau]P,a}) + \div(h_{P,b}) + \div(g), \quad \div(g) = ([-a\tau]P) + ([b]P) - 2 (\mathcal{O})
\]
Thus, compute $g(D)$ (the straight line through $[-a\tau]P$ and $[b]P$ in Weierstrass coordinates), and multiply together to compute $f_{P,1}(D) = h_{[-\tau]P,a}(D)h_{P,b}(D)g(D)$.  Computing $f_{P,2}(D)$ is similar.
\end{algorithm}

\section{Examples}

Consider the curve $E: y^2 = x^3 - x$ over the prime field $\FF_q$, $q=401$.  We have $E(\FF_q) = (\ZZ/20\ZZ)^2$.  This curve has complex multiplication by $R = \ZZ[i]$, given by $[i]: (x,y) \mapsto (-x,iy)$, where $i = 20 \in \FF_q$.  Let $\alpha = 1 - 2i$.  Consider the pairing 
\[
\widehat{T}_\alpha : E[\overline{\alpha}](\FF_q) \times E(\FF_q)/[\alpha]E(\FF_q) \rightarrow (\FF_q^*)^{\otimes_\ZZ \ZZ[i]} / ((\FF_q^*)^{\otimes_\ZZ \ZZ[i]})^{\alpha}.
\]
A basis for the $5$-torsion is $P = (204,283) \in E[\overline\alpha](\FF_q)$, $Q = (56,137) \in E[\alpha](\FF_q)$.
Also, $[i]P = (197,46)$, $[i]Q = (345,334)$.
Note that $Q$ generates $E(\FF_q)/[\alpha]E(\FF_q)$ and $P$ generates $E[\overline\alpha](\FF_q)$, each of size $5$.
We will compute $\widehat{T}_\alpha(P,Q)$ in a variety of ways.

\textbf{Method 1.} Let us compute the pairing using Algorithm~\ref{alg:alg}.  We have, for $a=d=1$, $b=2$, $c=-2$, that
\[
a + ci = \alpha, \quad b + di = \alpha \tau, \quad d - ci = \overline{\alpha}, \quad -b + ai = \overline{\alpha}\tau.
\]
Therefore we define
\[
	\div(f_{P,1})  = ([-i]P) + 2(P)-3(\mathcal{O}), \quad
\div(f_{P,2})  = -2
([-i]P) + (P) +(\mathcal{O}).
\]
Recall that $[2]P = [i]P$, since $[\overline\alpha]P = \mathcal{O}$.  Using the notation $L(T,U)$ for the line through $T$ and $U$, having divisor $(T) + (U) - (T+U) - (\mathcal{O})$ and $V(T)$ for the vertical line through $T$, having divisor $(T) + (-T) - 2(\mathcal{O})$, we have from the expression above that
\[
	f_{P,1} 
	= L(P,P).
\]
Therefore, using the standard Weierstrass model and its addition formul\ae,
\[
	f_{P,1}(X,Y) = (Y-\lambda_1 X + \lambda_1 x(P) - y(P))(X - x(2P)), \quad
	\lambda_1 = \frac{3x(P)^2 - 1}{2y(P)}.
\]
This becomes
\[
	f_{P,1}(X,Y) = -47X + Y + 82.
\]
Now for the second function
\[
	\div(f_{P,2})  = -2([-i]P) + (P) + \mathcal{O}
\]
we have
\[
  f_{P,2} = \left(\frac{L([-i]P,[-i]P)}{V([-2i]P)}\right)^{-1} = \frac{V([-2i]P)}{L([-i]P,[-i]P)}.
\]
That is,
\[
  f_{P,2}(X,Y) = \frac{X - x([-2i]P)}{Y-\lambda_2 X + \lambda_2 x([-i]P) - y([-i]P)}, \quad
  \lambda_2 = \frac{3x([-i]P)^2 - 1}{2y([-i]P)}.
\]
This becomes
\[
	f_{P,2}(X,Y) = \frac{X + 197}{-138X + Y - 36}.
\]
Let $h=3$, a multiplicative generator for $\FF_q$.  Note that $\ZZ[i]/\alpha \ZZ[i]$ has representatives $\{0,1,2,3,4\}$, so 
\[
  (\FF_q^*)^{\otimes_\ZZ \ZZ[i]} / ((\FF_q^*)^{\otimes_\ZZ \ZZ[i]})^{\alpha} = \{ 1, h, h^2, h^3, h^4 \}.
\]
Using an auxiliary point such as $S = (0,0)$ and the formula from Algorithm~\ref{alg:alg}, we obtain
\[
  \widehat{T}_\alpha(P,Q) \equiv 175(-5)^{i} \equiv h^{158+248i} \equiv h^{3+3i} \equiv h^{2} \pmod{ h^\alpha }.
\]
Using instead an auxiliary point such as $S=(1,0)$, we obtain
\[
	\widehat{T}_\alpha(P,Q) \equiv 186 \cdot 144^{i} \equiv h^{134+106i} \equiv h^{4+i} \equiv h^{2}.
\]
This illustrates the independence of the choice of $S$.

To take this into $\mu_5^{\otimes_\ZZ \ZZ[i]}$, for the purposes of comparing with the next method, we raise to the $(q-1)/5 = 80$.  Let $g = 72 = h^{80}$, a generator for $\mu_5 = \{ 1, g, g^2, g^3, g^4 \}$.  We obtain a type of \emph{reduced} pairing (albeit slightly different than that of Remark~\ref{rem:red}):
\[
	\widehat{T}^{red}_\alpha(P,Q) := \widehat{T}_\alpha(P,Q)^{\frac{q-1}{5}} \equiv g^{2}.
\]

\textbf{Method 2.}  Now we will compute $\widehat{T}^{red}_\alpha(P,Q)$ by using both parts of Theorem~\ref{thm:cmtatereduc}, relating it to $\widehat{T}_5$.
We have the reduced Tate-Lichtenbaum pairing $t_n^{red} = t_n^{(q-1)/n}$ as implemented in many mathematical software systems,
\[
	t^{red}_5(P,Q) \equiv g, \quad t^{red}_5([2i]P,Q) \equiv g^4, \quad t^{red}_5(P,P) \equiv 1, \quad t^{red}_5([2i]P,P) \equiv 1, \quad t^{red}_5(Q,Q) \equiv 1, \quad t^{red}_5([2i]Q,Q) \equiv 1.
\]
Therefore, by the first part of Theorem~\ref{thm:cmtatereduc},
\begin{equation}
  \label{eqn:prev}
\widehat{T}^{red}_5(P,Q) 
\equiv g^{2-i} \equiv g^4, \quad \widehat{T}^{red}_5(P,P) \equiv g^{0}, \quad \widehat{T}^{red}_5(Q,Q) \equiv g^{0}.
\end{equation}
Since $P$ is an $\alpha$-multiple, we expect $\widehat{T}_5(P, \cdot)$ to be $\overline\alpha$ powers.  Note that $\overline\alpha^{-1} \equiv 3 \pmod{\alpha}$.  Therefore, modulo $\alpha$, we have
\[
	\widehat{T}^{red}_\alpha(P,Q) \equiv (g^{2-i})^3 \equiv g^{1+2i} \equiv g^{2}.
\]
This agrees with Method 1.

Finally, for good measure, we repeat the first part of the computation above, namely $\widehat{T}^{red}_5(P,Q)$, using a single generator for the $\ZZ[i]$-module $E[5]$.  Observe that $E[5] = \ZZ[i]S$, where $S = P+Q$.  In particular, $P = (3 + 4i)S$ and $Q = (3 +i)S$.  We have
\[
\widehat{T}^{red}_5(S,S) 
\equiv g^{4}, \quad \widehat{T}^{red}_5(S,P) \equiv g^{2-4i}, \quad \widehat{T}^{red}_5(S,Q) \equiv g^{2-i}.
\]
We can verify that in fact
\[
	\widehat{T}^{red}_5(P,Q) = \widehat{T}^{red}_5([3+4i]S,[3+i]S) = \widehat{T}^{red}_5(S,S)^{(3-4i)(3+i)} = \widehat{T}^{red}_5(S,S)^{8+6i} \equiv (g^{4})^{3+i} \equiv g^{4},
	\]
  agreeing with \eqref{eqn:prev}.

     \bibliographystyle{plain}
     \bibliography{ellnet}

\def\cprime{$'$}
\begin{thebibliography}{10}

\bibitem{Bruin}
Peter Bruin.
\newblock The {T}ate pairing for {A}belian varieties over finite fields.
\newblock {\em J. Th\'{e}or. Nombres Bordeaux}, 23(2):323--328, 2011.

\bibitem{classactionpairing}
Wouter Castryck, Marc Houben, Simon-Philipp Merz, Marzio Mula, Sam van Buuren,
  and Frederik Vercauteren.
\newblock Weak instances of class group action based cryptography via
  self-pairings.
\newblock In {\em Advances in cryptology---{CRYPTO} 2023. {P}art {III}}, volume
  14083 of {\em Lecture Notes in Comput. Sci.}, pages 762--792. Springer, Cham,
  [2023] \copyright 2023.

\bibitem{DuqFre2}
Sylvain Duquesne and Gerhard Frey.
\newblock Background on pairings.
\newblock In {\em Handbook of elliptic and hyperelliptic curve cryptography},
  Discrete Math. Appl. (Boca Raton), pages 115--124. Chapman \& Hall/CRC, Boca
  Raton, FL, 2006.

\bibitem{FreyRuck}
Gerhard Frey and Hans-Georg R\"{u}ck.
\newblock A remark concerning {$m$}-divisibility and the discrete logarithm in
  the divisor class group of curves.
\newblock {\em Math. Comp.}, 62(206):865--874, 1994.

\bibitem{Gal}
Steven~D. Galbraith.
\newblock Pairings.
\newblock In {\em Advances in elliptic curve cryptography}, volume 317 of {\em
  London Math. Soc. Lecture Note Ser.}, pages 183--213. Cambridge Univ. Press,
  Cambridge, 2005.

\bibitem{GalC}
Steven~D. Galbraith.
\newblock The {W}eil pairing on elliptic curves over {$\CC$}.
\newblock 2005.

\bibitem{GalbraithBook}
Steven~D. Galbraith.
\newblock {\em Mathematics of public key cryptography}.
\newblock Cambridge University Press, Cambridge, 2012.

\bibitem{GalbraithGilchristRobert}
Steven~D. Galbraith, Valerie Gilchrist, and Damien Robert.
\newblock Improved algorithms for ascending isogeny volcanoes,
  and applications.
\newblock In Daniel Escudero and Ivan Damg{\aa}rd, editors, {\em Progress in
  Cryptology -- LATINCRYPT 2025}, pages 174--208, Cham, 2026. Springer Nature
  Switzerland.

\bibitem{Garefalakis}
Theodoulos Garefalakis.
\newblock The generalized {W}eil pairing and the discrete logarithm problem on
  elliptic curves.
\newblock In {\em L{ATIN} 2002: {T}heoretical informatics ({C}ancun)}, volume
  2286 of {\em Lecture Notes in Comput. Sci.}, pages 118--130. Springer,
  Berlin, 2002.

\bibitem{Lang}
Serge Lang.
\newblock {\em Abelian varieties}.
\newblock Springer-Verlag, New York-Berlin, 1983.
\newblock Reprint of the 1959 original.

\bibitem{Lic}
Stephen Lichtenbaum.
\newblock Duality theorems for curves over {$p$}-adic fields.
\newblock {\em Invent. Math.}, 7:120--136, 1969.

\bibitem{MaculaStange}
Joseph Macula and Katherine~E. Stange.
\newblock Extending {C}lass {G}roup {A}ction {A}ttacks via {S}esquilinear
  {P}airings.
\newblock In {\em Advances in cryptology---{ASIACRYPT} 2024. {P}art {III}},
  volume 15486 of {\em Lecture Notes in Comput. Sci.}, pages 371--395.
  Springer, Singapore, [2025] \copyright 2025.

\bibitem{Miller}
Victor~S. Miller.
\newblock Short programs for functions on elliptic curves.
\newblock Unpublished manuscript, 1986.

\bibitem{Mil2}
Victor~S. Miller.
\newblock The {W}eil pairing, and its efficient calculation.
\newblock {\em J. Cryptology}, 17(4):235--261, 2004.

\bibitem{MilStorrs}
J.~S. Milne.
\newblock Abelian varieties.
\newblock In {\em Arithmetic geometry (Storrs, Conn., 1984)}, pages 103--150.
  Springer, New York, 1986.

\bibitem{Mum}
David Mumford.
\newblock {\em Abelian varieties}.
\newblock Tata Institute of Fundamental Research Studies in Mathematics, No. 5.
  Published for the Tata Institute of Fundamental Research, Bombay, 1970.

\bibitem{Robert}
Damien Robert.
\newblock The geometric interpretation of the {T}ate pairing and its
  applications.
\newblock Cryptology ePrint Archive, Paper 2023/177, 2023.
\newblock \url{https://eprint.iacr.org/2023/177}.

\bibitem{Robert2}
Damien Robert.
\newblock Fast pairings via biextensions and cubical arithmetic.
\newblock Cryptology ePrint Archive, Paper 2024/517, 2024.
\newblock \url{https://eprint.iacr.org/2024/517}.

\bibitem{Sil1}
Joseph~H. Silverman.
\newblock {\em The arithmetic of elliptic curves}, volume 106 of {\em Graduate
  Texts in Mathematics}.
\newblock Springer-Verlag, New York, 1992.
\newblock Corrected reprint of the 1986 original.

\bibitem{Sil2}
Joseph~H. Silverman.
\newblock {\em Advanced topics in the arithmetic of elliptic curves}, volume
  151 of {\em Graduate Texts in Mathematics}.
\newblock Springer-Verlag, New York, 1994.

\bibitem{thesis}
Katherine~E. Stange.
\newblock {\em Elliptic nets and elliptic curves}.
\newblock PhD thesis, Brown University, May 2008.

\bibitem{Tat}
J.~Tate.
\newblock {\em {$WC$}-groups over {${\mathbf{p}}$}-adic fields}, volume~13 of
  {\em S\'eminaire Bourbaki; 10e ann\'ee: 1957/1958. Textes des conf\'erences;
  Expos\'es 152 \`a 168; 2e \'ed. corrig\'ee, Expos\'e 156}.
\newblock Secr\'etariat math\'ematique, Paris, 1958.

\end{thebibliography}

\end{document}